\documentclass[reqno,12pt]{amsart}
\usepackage{amsmath,amssymb,latexsym,soul,cite,mathrsfs}
\usepackage{blindtext}
\usepackage{enumitem}
\usepackage{ragged2e}
\pretolerance=10000
\usepackage[utf8]{inputenc}
\usepackage{amsthm,amsfonts}
\usepackage{comment}
\usepackage[mathcal]{eucal}
\usepackage{latexsym}
\usepackage[utf8]{inputenc}
\usepackage{bm}
\usepackage[all]{xy}
\usepackage{xcolor}
\usepackage{graphicx}
\usepackage{hyperref}

\usepackage[hyperpageref]{backref}

\marginparwidth 0cm \oddsidemargin 0cm \evensidemargin 0cm
\topmargin 0pt \textheight 220mm \textwidth 160mm

\newtheorem{theorem}{Theorem}

\newtheorem{theo}{Theorem}[section]

\newtheorem{remark}{Remark}[section]

\newtheorem{lemma}{Lemma}[section]

\newtheorem{prop}{Proposition}[section]

\newtheorem{cor}{Corollary}[section]

\newtheorem{conj}{Question}

\numberwithin{equation}{section}

\title[Sharp Sobolev and Adams-Trudinger-Moser inequalities]{Sharp Sobolev and Adams-Trudinger-Moser inequalities for symmetric functions without boundary conditions on hyperbolic spaces}

\author[J. M.\ do \'O]{Jo\~ao Marcos do \'O}
\author[G. Lu]{Guozhen Lu}
\author[R.~Ponciano]{Raon\'{\i} Ponciano}

\address[Jo\~{a}o Marcos do \'O]{Dep. Mathematics,
	Federal University of Para\'{\i}ba
	\newline\indent
	58051-900, Jo\~ao Pessoa-PB, Brazil}
\email{\href{mailto:jmbo@mat.ufpb.br}{jmbo@mat.ufpb.br}}

\address[Guozhen Lu]{Dep. Mathematics, University of Connecticut
	\newline\indent
	06269, Storrs-CT, United States of America}
\email{\href{mailto:guozhen.lu@uconn.edu}{guozhen.lu@uconn.edu}}

\address[Raon\'{\i}~Ponciano]{Ctr. Mathematics, Computing \& Cognition,
	Federal University of ABC
	\newline\indent
	09280-560, Santo Andr\'e, Brazil}
\email{\href{mailto:raoni.ponciano@ufabc.edu.br}{raoni.ponciano@ufabc.edu.br}}

\subjclass[2020]{Primary 26D10, 46E30, 46E35, 35J60; secondary 35J30, 58J70, 35A23}

\keywords{Hyperbolic Space; Radial Lemmata; Decay Lemmata; Adams-Trudinger-Moser inequalities; Weighted Sobolev spaces; radial functions without any boundary conditions. }

\begin{document}

\begin{abstract}
Embedding theorems for symmetric functions without any boundary conditions have been studied on flat Riemannian manifolds, such as the Euclidean space. However, these theorems have only been established on hyperbolic spaces for functions with homogeneous Dirichlet conditions. In this work, we focus on sharp Sobolev and Adams–Trudinger–Moser embeddings for radial functions in hyperbolic spaces, considering both bounded and unbounded domains. One of the main features of our approach is that we do not assume any boundary conditions for symmetric functions on geodesic balls or the entire hyperbolic space. Our main results include Theorems \ref{theo10}, \ref{theo11}, \ref{theo12}, which establish weighted Sobolev embedding theorems, and Theorems \ref{theo16} together with \ref{theo15}, which present Adams-Trudinger-Moser type of embedding theorems. In particular, a key result is Theorem \ref{theouv}, a highly nontrivial comparison between norms of the higher order covariant derivatives and higher order derivatives of the radial functions. Higher order asymptotic behavior of radial functions on hyperbolic spaces is established to prove our main theorems. This approach includes novel radial lemmata and decay properties of higher order radial Sobolev functions defined in hyperbolic space.
\end{abstract}

\maketitle

\begin{center}
		\footnotesize
		\tableofcontents
	\end{center}

\section{Introduction}
Sobolev embeddings are essential in analyzing partial differential equations (PDEs), as they establish connections between different function spaces. Their significance extends beyond pure mathematics, with applications in various fields such as geometry, physics, and engineering.

In particular, on flat Riemannian manifolds such as the Euclidean spaces, Sobolev and Adams-Trudinger-Moser embedding theorems for symmetric functions without any boundary conditions have been studied by numerous authors starting from the work of de Figueiredo et al. \cite{zbMATH05978504} and subsequent work by do \'{O}, Lu and Ponciano \cite{arXiv:2302.02262}. 
However, such embedding theorems have remained open for any curved manifolds. The authors of this work have established Sobolev embeddings for a class of weighted Sobolev spaces associated with a quasi-linear elliptic operator that generalizes the radial forms of the $k$-Hessian and $p$-Laplacian.

Hyperbolic spaces are complete noncompact Riemannian manifolds of negative constant curvature. This paper presents new Sobolev embeddings for radial functions defined on hyperbolic space, expanding the available tools for solving PDEs and addressing related problems without any boundary conditions.

We first recall embedding theorems on Euclidean balls for radial functions without any boundary conditions. Let $L^q_\theta(B_R^{\mathbb R})$ be the space of all measurable functions $u\colon B_R^{\mathbb R}\to\mathbb R$ such that $\int_{B_R^{\mathbb R}}|u|^q|x|^\theta\mathrm dx$ is finite, where $q\geq1$, $\theta\geq0$, and $B_R^{\mathbb R}\subset \mathbb R^N$ is the open Euclidean ball centered at the origin with radius $R\in(0,\infty]$. In \cite[Theorem 1.1]{zbMATH05978504}, de Figueiredo et al. established the following embedding results for Sobolev spaces of functions defined in Euclidean balls.
\begin{theorem}\label{theoDSM}
Let $W^{k,p}_{\mathbb R,\mathrm{rad}}(B_R^{\mathbb R})$ be the space of radially symmetric functions in $W^{k,p}_{\mathbb R}(B_R^{\mathbb R})$, where $k\geq1$ is an integer, $p\geq1$ is a real number, and $R\in(0,\infty)$.
\begin{flushleft}
$\mathrm{(1)}$ \justifying{Every function $u\in W^{k,p}_{\mathbb R,\mathrm{rad}}(B_R^{\mathbb R})$ is almost everywhere equal to a function $U$ in $C^{k-1}(\overline{B_R^{\mathbb R}}\backslash\{0\})$. Moreover, all partial derivatives of $U$ of order $k$ (in the classical sense) exist almost everywhere $|x|\in(0,R)$.}

\noindent$\mathrm{(2)}$ \justifying{If $N>kp$, then $W^{k,p}_{\mathbb R,\mathrm{rad}}(B_R^{\mathbb R})$ is continuously embedded in $L^q_\theta(B_R^{\mathbb R})$ for every $\theta\geq0$ and $1\leq q\leq \frac{(\theta+N)p}{N-kp}$.}

\noindent$\mathrm{(3)}$ \justifying{If $N=kp$ and $p>1$, then $W^{k,p}_{\mathbb R,\mathrm{rad}}(B_R^{\mathbb R})$ is compactly embedded in $L^q_\theta(B_R^{\mathbb R})$ for all $\theta\geq0$ and $1\leq q<\infty$.}

\noindent$\mathrm{(4)}$ If $N=kp$ and $p=1$, then $W^{N,1}_{\mathbb R,\mathrm{rad}}(B_R^{\mathbb R})$ is continuously embedded in $C(\overline{B_R^{\mathbb R}})$.
\end{flushleft}
\end{theorem}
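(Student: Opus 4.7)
The plan is to exploit the one-dimensional structure of radial functions. Writing $u(x) = U(r)$ with $r = |x|$, the Sobolev norm on $B_R^{\mathbb R}$ is equivalent to the weighted one-dimensional norm
$$\|u\|_{W^{k,p}(B_R^{\mathbb R})}^p \asymp \sum_{j=0}^k \int_0^R |U^{(j)}(r)|^p\, r^{N-1}\, dr,$$
so the problem reduces to studying the weighted interval space $W^{k,p}((0,R), r^{N-1}\, dr)$. For (1), I would observe that on any $[\epsilon, R]$ the weight $r^{N-1}$ is bounded above and away from zero, so $U \in W^{k,p}((\epsilon, R))$ in the unweighted sense, and the classical one-dimensional Sobolev embedding delivers $U \in C^{k-1}([\epsilon, R])$; letting $\epsilon \to 0$ yields $U \in C^{k-1}(\overline{B_R^{\mathbb R}}\setminus\{0\})$, while almost-everywhere existence of the classical $k$-th derivative follows from Lebesgue differentiation applied to the $L^p_{\mathrm{loc}}$ weak derivative.

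The heart of the argument is a radial decay estimate near the origin: for $N > kp$, I would establish
$$|U(r)| \le C\, r^{-(N-kp)/p}\, \|u\|_{W^{k,p}(B_R^{\mathbb R})}, \qquad 0 < r < R/2,$$
by iterating a single weighted Hölder bound. At the base step, for $N > p$,
$$|U(r) - U(r_0)| \le \int_r^{r_0} |U'(s)|\, ds \le \|U'\|_{L^p(s^{N-1}\, ds)} \Bigl(\int_r^{r_0} s^{-(N-1)/(p-1)}\, ds\Bigr)^{(p-1)/p},$$
producing $|U(r)| \le |U(r_0)| + C\, r^{-(N-p)/p}\, \|u\|_{W^{1,p}}$. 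Applying this argument successively to $U^{(k-1)}$, then integrating the resulting pointwise bound to control $U^{(k-2)}$, and continuing downward, the singular exponent advances by $p$ at each step and yields the claimed bound. Part (2) then follows by splitting $\int_0^R |U|^q\, r^{\theta+N-1}\, dr$ at $r_0 = R/2$: the outer integral is finite by continuity on $[r_0, R]$, and the inner integral converges when $q(N-kp)/p < \theta + N$, i.e. $q < (\theta+N)p/(N-kp)$, the sharp endpoint being recovered by a weighted Hardy inequality.

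In the critical case (3), with $N = kp$ and $p > 1$, the final iteration step produces $\int s^{-1}\, ds = \log$, leading to a logarithmic bound of the form $|U(r)| \le C\bigl(1 + (\log(R/r))^{(p-1)/p}\bigr)\|u\|_{W^{k,p}}$. This places $U$ in every $L^q_\theta$ and yields compactness via Arzelà–Ascoli (applied to $U_n$ on each $[\epsilon, R]$, yielding uniform $C^{k-1}$ convergence along a diagonal subsequence) combined with a uniform $L^q_\theta$ tail estimate on $(0, \epsilon)$ supplied by the logarithmic bound. For (4), with $p = 1$ and $k = N$, I would instead apply the Taylor expansion at $R$,
$$U(r) = \sum_{j=0}^{N-1} \frac{(r-R)^j}{j!}\, U^{(j)}(R) + \frac{1}{(N-1)!}\int_R^r (r-s)^{N-1}\, U^{(N)}(s)\, ds,$$
and note that $|r-s|^{N-1} \le s^{N-1}$ for $0 < r < s < R$, so the hypothesis $\int_0^R |U^{(N)}(s)|\, s^{N-1}\, ds < \infty$ forces a uniform bound and, by dominated convergence, produces a limit as $r \to 0^+$, extending $U$ continuously to $\overline{B_R^{\mathbb R}}$. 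The main technical obstacle I expect is achieving the sharp endpoint exponent in (2) and the correct logarithmic power in (3); both require careful bookkeeping across the iteration and invocation of a weighted Hardy-type inequality rather than naive Hölder.
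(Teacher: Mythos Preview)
This theorem is not proved in the present paper: it is quoted from de Figueiredo, dos Santos, and Miyagaki (the reference cited as \cite[Theorem~1.1]{zbMATH05978504}) as background for the hyperbolic results. The paper only records that the two radial lemmata---the $|x|^{-(N-kp)/p}$ bound for $N>kp$ and the $(\log(R/|x|))^{(p-1)/p}+1$ bound for $N=kp$---were ``crucial to the argument,'' without reproducing the proof. Your outline follows exactly that strategy, so there is no in-paper proof to compare against beyond this confirmation that you have identified the right skeleton.

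One substantive remark on your sketch: for the critical endpoint $q=(\theta+N)p/(N-kp)$ in part~(2), direct integration of the pointwise bound $|U(r)|\le Cr^{-(N-kp)/p}$ against $r^{\theta+N-1}$ only yields the subcritical range $q<p^*$, and your appeal to ``a weighted Hardy inequality'' for the endpoint is vague---a Hardy inequality controls $\int|U/r^k|^p r^{N-1}\,dr$, which does not obviously close the gap. In the paper's proof of the hyperbolic analogue (Theorem~\ref{theo11}, part~(2)) the endpoint is reached by a different and cleaner trick: write
\[
|u|^{p^*}\,|x|^\theta=\bigl(|u|^{\theta p/(N-kp)}|x|^\theta\bigr)\cdot|u|^{Np/(N-kp)},
\]
bound the first factor pointwise by $C\|u\|_{W^{k,p}}^{\theta p/(N-kp)}$ via the radial lemma, and control the remaining integral $\int|u|^{Np/(N-kp)}$ by the classical unweighted Sobolev embedding $W^{k,p}\hookrightarrow L^{Np/(N-kp)}$, which requires no symmetry. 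The same device works verbatim in the Euclidean setting and avoids any delicate endpoint Hardy argument.
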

To prove Theorem \ref{theoDSM}, the following radial lemmata were crucial to the argument. If $N>kp$, then there exists $C>0$ such that for all $u\in W^{k,p}_{\mathbb R,\mathrm{rad}}(B_R^{\mathbb R})$,
\begin{equation*}
|u(x)|\leq C\dfrac{\|u\|_{W^{k,p}(B_R^{\mathbb R})}}{|x|^{\frac{N-kp}p}},\quad\forall x\in \overline{B_R^{\mathbb R}}\backslash\{0\}.
\end{equation*}
If $N=kp$ and $p>1$, then there exists $C>0$ such that for all $u\in W^{k,p}_{\mathbb R,\mathrm{rad}}(B_R^{\mathbb R})$,
\begin{equation*}
|u(x)|\leq C\|u\|_{W^{k,p}(B_R^{\mathbb R})}\left[\left(\log\frac{R}{|x|}\right)^{\frac{p-1}p}+1\right],\quad\forall x\in \overline{B_R^{\mathbb R}}\backslash\{0\}.
\end{equation*}

In the context of the domain being the entire Euclidean space $\mathbb R^N$, recent work by the current authors, notably \cite[Theorem 1.1]{arXiv:2306.00194}, together with the item (1) of Theorem \ref{theoDSM}, yields the following result.
\begin{theorem}\label{theoB}
Let $W^{k,p}_{\mathbb R,\mathrm{rad}}(\mathbb R^N)$ be the space of radially symmetric functions in $W^{k,p}(\mathbb R^N)$, where $k\geq1$ is an integer and $p\geq1$ is a real number.
\begin{flushleft}
    $\mathrm{(1)}$ \justifying{Every function $u\in W^{k,p}_{\mathbb R,\mathrm{rad}}(\mathbb R^N)$ is almost everywhere equal to a function $U$ in $C^{k-1}(\mathbb R^N\backslash\{0\})$. Moreover, all partial derivatives of $U$ of order $k$ (in the classical sense) exist almost everywhere for $|x|\in(0,\infty)$.}
    
    \noindent$\mathrm{(2)}$ \justifying{If $N>kp$, then the following continuous embedding holds:
    \begin{equation*}
    W^{k,p}_{\mathbb R,\mathrm{rad}}(\mathbb R^N)\hookrightarrow L^q_{\theta}(\mathbb R^N),\quad p\leq q\leq p^*
    \end{equation*}
    is continuously embedded in $L^q_\theta(\mathbb R^N)$ for every $\theta\geq0$ and $p\leq q\leq \frac{(\theta+N)p}{N-kp}$.}
    
    \noindent$\mathrm{(3)}$ \justifying{If $N=kp$, then $W^{k,p}_{\mathbb R,\mathrm{rad}}(\mathbb R^N)$ is compactly embedded in $L^q_\theta(\mathbb R^N)$ for all $\theta\geq0$ and $p\leq q<\infty$.}
\end{flushleft}
\end{theorem}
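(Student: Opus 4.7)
The plan is to combine the local radial Sobolev theory on Euclidean balls recorded in Theorem \ref{theoDSM} with the global radial embedding theorem \cite[Theorem 1.1]{arXiv:2306.00194}. Part (1) is handled by localization and gluing, while parts (2) and (3) follow from a decomposition of $\mathbb R^N$ into a unit ball and its complement, applying the local theorem on the former and a Strauss-type pointwise decay estimate on the latter.

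For part (1), I would fix an exhausting sequence of radii $R_n \uparrow \infty$ and apply item (1) of Theorem \ref{theoDSM} to each restriction $u|_{B_{R_n}^{\mathbb R}} \in W^{k,p}_{\mathbb R,\mathrm{rad}}(B_{R_n}^{\mathbb R})$, obtaining continuous representatives $U_{R_n} \in C^{k-1}(\overline{B_{R_n}^{\mathbb R}} \setminus \{0\})$ whose classical partial derivatives of order $k$ exist a.e.\ on $|x| \in (0, R_n)$. Because continuous representatives of a given $L^1_{\mathrm{loc}}$ class are unique modulo null sets, the family $\{U_{R_n}\}$ is compatible on overlaps and glues into a single $U \in C^{k-1}(\mathbb R^N \setminus \{0\})$ with classical $k$-th order partial derivatives existing a.e.\ on $|x| \in (0, \infty)$.

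For parts (2) and (3), the crucial input is the radial decay estimate $|u(x)| \leq C \|u\|_{W^{k,p}(\mathbb R^N)} |x|^{-(N-kp)/p}$, valid for radial $u \in W^{k,p}(\mathbb R^N)$ away from the origin. I would split $\mathbb R^N = B_1^{\mathbb R} \cup (\mathbb R^N \setminus B_1^{\mathbb R})$ and apply item (2) of Theorem \ref{theoDSM} on $B_1^{\mathbb R}$ (which already yields the sharp bound $q \leq (\theta+N)p/(N-kp)$ with the weight $|x|^\theta$). On the exterior I would decompose $|u|^q = |u|^a \cdot |u|^{q-a}$ with $a \in \{p,\, Np/(N-kp)\}$, invoke the pointwise decay on the factor $|u|^{q-a}$ to absorb the weight $|x|^\theta$, and reduce to either the global $L^p$ norm or the critical Sobolev embedding $W^{k,p}(\mathbb R^N) \hookrightarrow L^{Np/(N-kp)}(\mathbb R^N)$. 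Compactness at $N = kp$ follows by combining Rellich--Kondrachov on fixed balls with tightness at infinity provided by the same radial decay.

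The main obstacle, and precisely what \cite[Theorem 1.1]{arXiv:2306.00194} accomplishes, is a uniform treatment of the whole subcritical range $p \leq q \leq (\theta+N)p/(N-kp)$ on the unbounded domain when $\theta > 0$; in the intermediate subrange neither the critical Sobolev embedding nor the pointwise radial decay alone is sufficient, and the sharp result is obtained by carefully interpolating between the $L^p$ and $L^{Np/(N-kp)}$ endpoints while exploiting the radial decay. Combining this global statement with item (1) of Theorem \ref{theoDSM} then yields Theorem \ref{theoB}.
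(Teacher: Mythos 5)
First, a point of reference: the paper does not prove Theorem \ref{theoB} internally. It is imported by combining \cite[Theorem 1.1]{arXiv:2306.00194} with item (1) of Theorem \ref{theoDSM}, and the paper explicitly identifies the decisive analytic ingredient behind that result as the first-order Strauss--Lions decay \eqref{dlrn} from \cite{zbMATH03789294}. Your treatment of part (1) (gluing the $C^{k-1}$ representatives supplied by Theorem \ref{theoDSM}(1) over an exhaustion by balls) and your closing reduction of parts (2)--(3) to \cite[Theorem 1.1]{arXiv:2306.00194} therefore follow the same route as the paper.

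The self-contained sketch you insert for parts (2) and (3), however, contains a genuine gap: you base the exterior estimates on the decay $|u(x)|\le C\|u\|_{W^{k,p}(\mathbb R^N)}|x|^{-(N-kp)/p}$, which is the near-origin radial-lemma rate, whereas the relevant decay at infinity is the $k$-independent estimate \eqref{dlrn}, $|u(x)|\le C\|u\|_{L^p}^{(p-1)/p}\|\nabla u\|_{L^p}^{1/p}|x|^{-(N-1)/p}$. Since $kp\ge 1$ your rate is never better, and in the limiting case $N=kp$ it degenerates to exponent $0$: it yields mere boundedness, so the claimed ``tightness at infinity provided by the same radial decay'' in part (3) is unjustified, and the endpoint $a=Np/(N-kp)$ of your decomposition does not even exist there. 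For $N>kp$ with $\theta>0$, the splitting $|u|^q=|u|^p|u|^{q-p}$ combined with your rate absorbs the weight $|x|^\theta$ on $\{|x|\ge 1\}$ only when $q\ge p+\theta p/(N-kp)$, so the proposed exterior estimate leaves the lower part of the claimed range uncovered; replacing it by \eqref{dlrn} improves the threshold to $q\ge p+\theta p/(N-1)$ but, as you yourself note, still does not reach the full range near $q=p$, which is exactly the part the paper (and you, in your last paragraph) delegate to \cite[Theorem 1.1]{arXiv:2306.00194}. In short, the final citation-based reduction coincides with the paper's treatment, but the intermediate argument as written would fail at $N=kp$ and is insufficient in the weighted subrange for $N>kp$; it should be run with \eqref{dlrn} (and the interpolation machinery of the cited reference) rather than with the $|x|^{-(N-kp)/p}$ bound.
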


To establish Theorem \ref{theoB}, we relied on \cite[Lemme II.1]{zbMATH03789294}, which provides the following decay property for functions in $W^{1,p}_{\mathbb R,\mathrm{rad}}(\mathbb R^N)$ ($N\geq2$),
\begin{equation}\label{dlrn}
|u(x)|\leq C\|u\|_{L^p(\mathbb R^N)}^{\frac{p-1}p}\|\nabla u\|^{\frac1p}_{L^p(\mathbb R^N)}|x|^{-\frac{N-1}p},\quad\forall x\in\mathbb R^N\backslash\{0\}.
\end{equation}

In Theorem \ref{theoB}, the estimate $\|u\|_{L^q_\theta(\mathbb R^N)} \leq C\|u\|_ {W^{k,p}(\mathbb{R}^N)}$ was obtained, utilizing the full norm of the Sobolev space $W^{k,p}(\mathbb{R}^N)$. For the estimate $\|u\|_{L^q_\theta(\mathbb R^N)} \leq C \|\nabla^k u\|_{L^p_\alpha(\mathbb R^N)}$, which involves only the norm in $L^p_{\alpha}(\mathbb{R}^N)$ of the $k$-th gradient, we refer to \cite{zbMATH06376695}.

The primary objective of this paper is to explore embedding results for $W^{k,p}_{\mathbb H,\mathrm{rad}}(B_R^{\mathbb H})$, the Sobolev space of radial functions defined on both bounded ball $B_R^{\mathbb H}$, (where $0<R<\infty$), and the entire hyperbolic space $B^{\mathbb H}_\infty=\mathbb{H}^N$. The essential contribution here is that we do not assume any boundary conditions when considering embeddings on bounded domains of hyperbolic spaces.
To achieve this, it was essential to prove radial lemmata and establish decay properties, both of which are intrinsically interesting.
We want to highlight that we are using the symbol $W^{k,p}_{\mathbb H,\mathrm{rad}}(B_R^{\mathbb H})$ and $W^{k,p}_{\mathbb R,\mathrm{rad}}(B_R^{\mathbb R})$ to represent the Sobolev spaces of radial functions on the hyperbolic space ball and on the Euclidean space ball, respectively.

We begin by introducing the fundamental concepts pertinent to our study.
Consider $\mathbb H^N$, the $N$-dimensional hyperbolic space represented by the Poincar\'e ball model:
\begin{equation*}
\mathbb H^N=\{x\in\mathbb R^N\colon|x|<1\}
\end{equation*}
equipped with the Riemannian metric
\begin{equation*}
g_{ij}=\left(\frac{2}{1-|x|^2}\right)^2\delta_{ij},
\end{equation*}
where $\delta_{ij}$ is the Kronecker delta. The distance between the origin and $x\in\mathbb H^N$ is determined by
\begin{equation*}
d(x)=\log\dfrac{1+|x|}{1-|x|}.
\end{equation*}
We denote $B_R^{\mathbb H}=\{x\in\mathbb H^N\colon d(x)<R\}$ as the hyperbolic geodesic ball with radius $R\in(0,\infty]$. Note that $B_R^{\mathbb H}=B_{\overline R}^{\mathbb R}$, where $\overline R=\tanh(\frac{R}{2})$ and, consequently, $R=\log\frac{1+\overline R}{1-\overline R}$. 

A function $u\colon B_R^{\mathbb H}\to\mathbb R$ is called \textit{radial in the hyperbolic space} if there exists a function $v\colon (0,R)\to\mathbb R$ such that $u(x)=v(d(x))$ for all $x\in B_R^{\mathbb H}$. Additionally, assuming $u\in L^1(B_R^{\mathbb H})$, the hyperbolic polar coordinates ($t=\log\frac{1+|x|}{1-|x|}$ and $|x|=\tanh\frac{t}2$) imply
\begin{equation}\label{eqpolar}
\int_{B_R^{\mathbb H}}u(x)\mathrm dV_g=\omega_{N-1}\int_0^Rv(t)\sinh^{N-1}(t)\mathrm dt,
\end{equation}
where $\omega_{N-1}$ is the $(N-1)$-dimensional surface measure of the unit sphere $\mathbb S^{N-1}\subset\mathbb R^N$.

For $\Omega\subset \mathbb H^N$, we define
\begin{equation*}
L^p_{\sinh^\theta}(\Omega)=\left\{u\colon\Omega\to\mathbb R\colon\int_\Omega|u|^p\sinh^\theta d(x)\mathrm dV_g<\infty\right\}
\end{equation*} 
and $W^{k,p}((0,R),\sinh^{N-1}(t))$ as the weighted Sobolev space consisting of all functions $v\colon(0,R)\to\mathbb R$ that have weak derivatives up to order $k$ and satisfy
\begin{equation*}
\|v\|_{W^{k,p}_{\sinh^{N-1}}}:=\left(\sum_{j=0}^k\int_0^R|v^{(j)}(t)|^p\sinh^{N-1}(t)\mathrm dt\right)^{\frac1p}<\infty.
\end{equation*}

\subsection{Description of the main results}

Assuming $u\in W^{k,p}_{\mathbb H,\mathrm{rad}}(B_R^{\mathbb H})$, our first main theorem establishes a relationship between $\nabla^ku$, the $k$-th order covariant derivative of $u$, and $v^{(k)}$, the $k$-th order derivative of $v$. This relationship is crucial for proving the embedding results for Sobolev spaces of functions defined in the hyperbolic space.

\begin{theo}\label{theouv}
If $u\in W^{k,p}_{\mathbb H,\mathrm{rad}}(B_R^{\mathbb H})$ with $R\in(0,\infty]$, then $v\in W^{k,p}((0,R),\sinh^{N-1}(t))$. Moreover,
\begin{equation}\label{eqineuv}
|\nabla^ku(x)|_g\geq|v^{(k)}(d(x))|,\quad\forall x\in B_R^{\mathbb H}\backslash\{0\}.
\end{equation}
\end{theo}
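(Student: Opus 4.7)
My plan is to work in geodesic polar coordinates on $\mathbb{H}^N$, in which the metric decomposes as $g = \mathrm{d}t^2 + \sinh^2(t)\,g_{\mathbb{S}^{N-1}}$ with $t = d(x)$. In these coordinates $\partial_t$ is the unit-speed radial geodesic vector field, so $|\partial_t|_g = 1$ and $\nabla_{\partial_t}\partial_t = 0$. For a smooth radial $u(x) = v(t)$, I will prove by induction on $k$ that
\begin{equation*}
(\nabla^k u)(\partial_t, \ldots, \partial_t) = v^{(k)}(t).
\end{equation*}
The case $k=1$ is immediate since $\mathrm{d}u = v'(t)\,\mathrm{d}t$. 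For the inductive step, the standard formula
\begin{equation*}
(\nabla^{k+1} u)(\partial_t, \ldots, \partial_t) = \partial_t\bigl((\nabla^k u)(\partial_t, \ldots, \partial_t)\bigr) - \sum_{i=1}^k (\nabla^k u)(\partial_t, \ldots, \nabla_{\partial_t}\partial_t, \ldots, \partial_t)
\end{equation*}
has vanishing sum thanks to $\nabla_{\partial_t}\partial_t = 0$, leaving $v^{(k+1)}(t)$.

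Next I invoke the Cauchy--Schwarz bound for the Hilbert--Schmidt tensor norm: for any $(0,k)$-tensor $T$, expanding the $X_i$'s in an orthonormal frame and applying Cauchy--Schwarz slot by slot yields $|T(X_1,\ldots,X_k)| \leq |T|_g \prod_i |X_i|_g$. Applied to $T = \nabla^k u$ with each $X_i = \partial_t$, this gives
\begin{equation*}
|v^{(k)}(t)| = \bigl|(\nabla^k u)(\partial_t, \ldots, \partial_t)\bigr| \leq |\nabla^k u|_g,
\end{equation*}
which is precisely (\ref{eqineuv}) in the smooth radial case.

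To extend this to arbitrary $u \in W^{k,p}_{\mathbb H,\mathrm{rad}}(B_R^{\mathbb H})$, I would approximate by smooth radial functions $u_n(x) = v_n(d(x))$ converging to $u$ in $W^{k,p}$, using radial mollification (with an additional radial cutoff when $R = \infty$). Applying the smooth inequality to each difference $u_n - u_m$ and integrating in polar coordinates via (\ref{eqpolar}) shows that, for each $j = 0, 1, \ldots, k$, the sequence $(v_n^{(j)})_n$ is Cauchy in $L^p((0,R), \sinh^{N-1}(t)\,\mathrm{d}t)$; identifying the limits as the weak derivatives of $v$ yields $v \in W^{k,p}((0,R), \sinh^{N-1}(t))$. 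Passing to a subsequence converging pointwise a.e.\ then upgrades (\ref{eqineuv}) to hold for $u$ a.e.\ on $B_R^{\mathbb H}\setminus\{0\}$.

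The main obstacle I anticipate is the density step in the hyperbolic setting: one must carefully justify smooth radial approximation on geodesic balls (respectively on all of $\mathbb{H}^N$) while controlling all covariant derivatives up to order $k$. A radial convolution against mollifiers supported in small geodesic balls, combined with a cutoff when $R = \infty$, should suffice, provided one checks that these operations interact well with the Levi-Civita connection to preserve the $W^{k,p}$ approximation. Everything else reduces to the algebraic identity for $\nabla^k u(\partial_t,\ldots,\partial_t)$ and the tensorial Cauchy--Schwarz bound.
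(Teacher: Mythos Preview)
Your approach is correct and considerably more streamlined than the paper's. Both arguments share the same skeleton---prove the identity $v^{(k)}(t)=(\nabla^k u)(\partial_t,\ldots,\partial_t)$ for smooth radial $u$, apply a tensorial Cauchy--Schwarz, then pass to the limit through a radial smooth approximation---but the proofs of the key identity differ substantially. The paper works entirely in Poincar\'e ball coordinates and establishes, by a lengthy induction involving the explicit Christoffel symbols~\eqref{symChris}, the formula
\[
v^{(j)}(d(x))=\left(\tfrac{1-|x|^2}{2}\right)^{j}\sum_{i_1,\ldots,i_j}(\nabla^j u)_{i_1\ldots i_j}(x)\,\tfrac{x_{i_1}\cdots x_{i_j}}{|x|^j},
\]
which is precisely your identity written out in coordinates (the prefactor and the $x_i/|x|$ are exactly the components of the unit radial field $\partial_t$). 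Your geodesic-polar argument replaces all of that computation by the single geometric fact $\nabla_{\partial_t}\partial_t=0$, making the induction one line. What the paper's coordinate formula buys in return is an explicit expression that it reuses in a separate integration-by-parts lemma (Lemma~\ref{lemmaagh1}) to identify the weak derivatives of $v$ directly from those of $u$; you bypass that lemma with the standard ``$L^p$-limit of derivatives is the weak derivative'' argument, which is perfectly adequate here.

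Two small remarks. First, for the density step the paper simply invokes the definition of $W^{k,p}(M)$ as the completion of $\mathfrak C^{k,p}(M)$, so smooth approximants come for free; radiality can then be enforced by averaging over the isotropy group $O(N)$ acting isometrically on $\mathbb H^N$, which is slightly cleaner than building a radial mollifier by hand. Second, your argument yields \eqref{eqineuv} almost everywhere; the paper upgrades this to \emph{all} $x\in B_R^{\mathbb H}\setminus\{0\}$ by invoking the $C^{k-1}$-regularity statements in Theorems~\ref{theo11}(1) and~\ref{theo12}(1), so if you want the pointwise-everywhere conclusion you should cite those as well.
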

The main challenge in demonstrating the previous theorem lies in estimating the $k$-covariant derivative of $u$, which involves numerous terms resulting from classical derivative rules. For instance, just for $k=2$, we have:
\begin{align*}
\dfrac{\partial^2u}{\partial x_i\partial x_j}&=\left(\dfrac{2}{1-|x|^2}\right)^2v''(d(x))\frac{x_ix_j}{|x|^2}+\left(\dfrac{2}{1-|x|^2}\right)^2v'(d(x))\frac{x_ix_j}{|x|}\\
&\quad+\dfrac{2}{1-|x|^2}v'(d(x))\left(\dfrac{\delta_{ij}}{|x|}-\dfrac{x_ix_j}{|x|^3}\right),
\end{align*}
where $\delta_{ij}$ is the Kronecker delta. The number of terms increases with each additional derivative. Even in $\mathbb R^N$, this calculation is not straightforward, as demonstrated in the proof of \cite[Theorem 2.2]{zbMATH05978504}. Additionally, the covariant derivative includes a term depending on the Christoffel symbol of $\mathbb H^N$, as shown in \eqref{eqsf}. Therefore, it is evident that this proof is not elementary.

Motivated by Theorem \ref{theouv}, a natural question arises: if $u$ belongs to $W^{k,p}_{\mathbb H,\mathrm{rad}}(B_R^{\mathbb H})$, does $v$ belong to $W^{k,p}((0,R),\sinh^{N-1}(t))$ and vice-versa? Our next main theorem addresses this question.
\begin{theo}\label{theo10}
For each $R\in(0,\infty]$, $p\geq1$, and $k\geq1$ integer, we have
\begin{flushleft}
    $\mathrm{(1)}$ $W^{k,p}_{\mathbb H,\mathrm{rad}}(B_R^{\mathbb H})\hookrightarrow W^{k,p}((0,R),\sinh^{N-1}(t))$;
    
    \noindent$\mathrm{(2)}$ $W^{1,p}_{\mathbb H,\mathrm{rad}}(B_R^{\mathbb H})\equiv W^{1,p}((0,R),\sinh^{N-1}(t))$;
    
    \noindent$\mathrm{(3)}$ For $R\in(0,\infty)$ and $k\geq2$, we have $W^{k,p}_{\mathbb H,\mathrm{rad}}(B_R^{\mathbb H})\equiv W^{k,p}((0,R),\sinh^{N-1}(t))$ if, and only if, $N>(k-1)p$.
\end{flushleft}
\end{theo}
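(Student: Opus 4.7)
The plan is to prove the three items separately by combining Theorem \ref{theouv}, the polar coordinate formula \eqref{eqpolar}, and—for the harder direction of (3)—a companion upper estimate for $|\nabla^j u|_g$ in terms of the lower derivatives of $v$. For (1), I apply Theorem \ref{theouv} at each order $j = 1, \ldots, k$ (using $W^{k,p}_{\mathbb H,\mathrm{rad}} \subset W^{j,p}_{\mathbb H,\mathrm{rad}}$), while for $j = 0$ one has $|u(x)| = |v(d(x))|$ by definition. Combined with \eqref{eqpolar}, this yields
$$\omega_{N-1} \int_0^R |v^{(j)}(t)|^p \sinh^{N-1}(t)\,dt \leq \int_{B_R^{\mathbb H}} |\nabla^j u|_g^p\, dV_g$$
for each $j = 0, \ldots, k$, and summing gives the continuous embedding.

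For (2), the forward embedding is (1). For the converse, given $v \in W^{1,p}((0,R), \sinh^{N-1})$, I define $u(x) := v(d(x))$; approximating by smooth radial profiles and using $|\nabla d|_g \equiv 1$ on $B_R^{\mathbb H}\setminus\{0\}$, I verify $|\nabla u|_g = |v'(d(x))|$ weakly. Polar coordinates then produce $\|u\|_{W^{1,p}_{\mathbb H}}^p = \omega_{N-1}\|v\|_{W^{1,p}_{\sinh^{N-1}}}^p$, so the two spaces coincide with equivalent norms for every $R \in (0,\infty]$.

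For (3), I exploit the warped product structure $g = dt^2 + \sinh^2(t)\, g_{\mathbb S^{N-1}}$. Iterating the Hessian identity $|\nabla^2 u|_g^2 = |v''|^2 + (N-1)\coth^2(t)\, |v'|^2$, I expect a pointwise upper bound
$$|\nabla^j u|_g^2 \leq C_j \sum_{\ell=1}^{j} A_{j,\ell}(t)\, |v^{(\ell)}(t)|^2,$$
with $A_{j,j} \equiv 1$ and, crucially, $A_{j,\ell}(t) = O(t^{-2(j-\ell)})$ as $t \to 0^+$ and bounded on each $[\epsilon, R]$. For the ``if'' direction, assuming $N > (k-1)p$, polar coordinates reduce the control of $\|u\|_{W^{k,p}_{\mathbb H}}^p$ to a finite family of weighted integrals
$$\int_0^R t^{-(j-\ell)p}\, |v^{(\ell)}(t)|^p\, t^{N-1}\, dt, \qquad 1\leq \ell \leq j \leq k,$$
each of which I dominate by $\|v\|_{W^{k,p}_{\sinh^{N-1}}}^p$ via iterated weighted one-dimensional Hardy inequalities. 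Setting $m = j - \ell \leq k-1$ and $f = v^{(\ell)}$, the required estimate $\|f/t^m\|_{L^p(t^{N-1})} \lesssim \|f^{(m)}\|_{L^p(t^{N-1})} + \|f\|_{L^p(t^{N-1})}$ holds precisely when $N > mp$; the worst case $m = k-1$ gives exactly the threshold $N > (k-1)p$. Since no boundary condition is imposed, I would handle endpoint terms by writing $f(t) = f(R/2) + \int_{R/2}^t f'(s)\, ds$ and controlling $f(R/2)$ by 1D Sobolev embedding.

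For the ``only if'' direction, the natural counterexample is $v(t) = t$, so $u(x) = d(x)$ is the distance function, which lies trivially in $W^{k,p}((0,R), \sinh^{N-1})$ since $v'\equiv 1$ and $v^{(j)} \equiv 0$ for $j \geq 2$. A direct tensor computation, starting from $\nabla^2 d = \coth(t)(g - dt \otimes dt)$ and iterating, gives $|\nabla^k d|_g \sim C\, t^{-(k-1)}$ as $t \to 0^+$, so $\int_{B_R^{\mathbb H}} |\nabla^k d|_g^p\, dV_g$ diverges at the origin exactly when $N \leq (k-1)p$, placing $d$ outside $W^{k,p}_{\mathbb H,\mathrm{rad}}$. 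I expect the principal obstacle to be the rigorous derivation of the pointwise expansion for $|\nabla^j u|_g^2$ together with the sharp asymptotic control of the coefficients $A_{j,\ell}$: the warped-product tensor combinatorics produce many cross-terms whose cancellations must be tracked carefully, and only after this is settled does the remaining argument reduce to a disciplined iteration of weighted Hardy inequalities.
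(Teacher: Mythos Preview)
Your plan is correct and tracks the paper's proof closely: items (1)--(2) are exactly Proposition~\ref{propa} (Theorem~\ref{theouv} plus polar coordinates, with equality at orders $0$ and $1$), and the ``if'' direction of (3) is Proposition~\ref{propb}, where the paper derives precisely your pointwise estimate $|(\nabla^k u)_{i_1\ldots i_k}| \le C\sum_{j=1}^k |v^{(j)}(d(x))|/|x|^{k-j}$ and then invokes the weighted Hardy inequality (Proposition~\ref{prophardy}) under $N>(k-1)p$.

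The one place your route diverges is the ``only if'' counterexample. You propose to compute $|\nabla^k d|_g$ intrinsically from the warped-product Hessian $\nabla^2 d = \coth t\,(g - dt\otimes dt)$ and iterate, reading off the $t^{-(k-1)}$ blow-up directly. The paper (Proposition~\ref{propc}) instead stays in Poincar\'e ball coordinates, restricts to a conical annular sector $\{\delta|x|\le x_1\le \varepsilon|x|\}$, and shows the single component $(\nabla^k d)_{1\cdots 1}$ already satisfies $|(\nabla^k d)_{1\cdots 1}|\ge C|x|^{-(k-1)}$ there by isolating the dominant term $\tfrac{2}{1-|x|^2}\,\partial_{x_1}^k|x|$. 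Your geometric approach is cleaner in principle and avoids the sector trick, but---as you anticipate---the iteration of $\nabla$ on $\coth t\,(g-dt\otimes dt)$ generates many cross-terms, and you must verify that no cancellation kills the leading $t^{-(k-1)}$ order (it does not: near $t=0$ the computation reduces to the Euclidean $|\nabla^k |x||\sim C|x|^{-(k-1)}$). The paper's coordinate argument is more pedestrian but makes this lower bound completely explicit.
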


Analogous to Theorem \ref{theoDSM} for bounded balls in $\mathbb R^N$, we establish the embedding for the geodesic ball in hyperbolic space of $W^{k,p}_{\mathbb H,\mathrm{rad}}(B_R^\mathbb H)$ into $L^q_{\sinh^\theta}(B_R^\mathbb H)$ with our next main result.
\begin{theo}\label{theo11}
Assume $R\in(0,\infty)$, $p\geq1$ real, and $k\geq1$ integer.
\begin{flushleft}
    $\mathrm{(1)}$ \justifying{Every function $u\in W^{k,p}_{\mathbb H,\mathrm{rad}}(B_R^\mathbb H)$ is almost everywhere equal to a function $U$ in $C^{k-1}(\overline{B_R^\mathbb H}\backslash\{0\})$. Additionally, all partial derivatives of $U$ of order $k$ (in the classical sense) exist almost everywhere for $d(x)\in(0,R)$.}
    
    \noindent$\mathrm{(2)}$ \justifying{If $N>kp$, then $W^{k,p}_{\mathbb H,\mathrm{rad}}(B_R^{\mathbb H})$ is continuously embedded in $L^q_{\sinh^\theta}(B_R^{\mathbb H})$ for every $\theta\geq0$ and $1\leq q\leq \frac{(\theta+N)p}{N-kp}$.}
    
     \noindent$\mathrm{(3)}$ \justifying{If $N=kp$ and $p>1$, then $W^{k,p}_{\mathbb H,\mathrm{rad}}(B_R^{\mathbb H})$ is compactly embedded in $L^q_{\sinh^\theta}(B_R^{\mathbb H})$ for all $\theta\geq0$ and $1\leq q<\infty$.}
     
     \noindent$\mathrm{(4)}$ \justifying{If $N=kp$ and $p=1$, then $W^{N,1}_{\mathbb H,\mathrm{rad}}(B_R^{\mathbb H})$ is continuously embedded in $C(\overline{B_R^{\mathbb H}})$.}
\end{flushleft}
\end{theo}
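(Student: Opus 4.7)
The plan is to carry all four parts over from the Euclidean analogue (Theorem~\ref{theoDSM}) by working with the radial profile $v$ on the interval $(0,R)$ and exploiting Theorem~\ref{theo10}. In principle one can shortcut the whole theorem by a norm equivalence: since $B_{R}^{\mathbb H}$ equals $B_{\bar R}^{\mathbb R}$ with $\bar R=\tanh(R/2)<1$, the conformal factor $2/(1-|x|^{2})$ is smooth and bounded above and below on $\overline{B_{\bar R}^{\mathbb R}}$, so the hyperbolic Sobolev norm is equivalent to the Euclidean one there, and the identity $\sinh d(x)=2|x|/(1-|x|^{2})$ shows $L^{q}_{\sinh^{\theta}}(B_{R}^{\mathbb H})\equiv L^{q}_{\theta}(B_{\bar R}^{\mathbb R})$. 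I would nevertheless prefer an intrinsic argument based on the weighted profile on $(0,R)$, because the same blueprint will adapt to $R=\infty$ in Theorem~\ref{theo12}, where no Euclidean comparison is available.

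For (1), Theorem~\ref{theo10}(1) gives $v\in W^{k,p}((0,R),\sinh^{N-1}(t))$. Since $\sinh^{N-1}(t)$ is bounded between positive constants on every $[\epsilon,R]\subset(0,R]$, $v$ lies in the unweighted $W^{k,p}(\epsilon,R)$, hence in $C^{k-1}([\epsilon,R])$ with $v^{(k)}$ existing almost everywhere; composing with the smooth map $d\colon\overline{B_{R}^{\mathbb H}}\setminus\{0\}\to(0,R]$ produces the required representative $U$. The engine for (2) and (3) is then a pointwise radial lemma asserting, for $u\in W^{k,p}_{\mathbb H,\mathrm{rad}}(B_{R}^{\mathbb H})$,
\begin{equation*}
|v(t)|\le C\|u\|_{W^{k,p}}\cdot\begin{cases}\sinh^{-(N-kp)/p}(t),&\text{if }N>kp,\\ 1+\bigl(\log(R/t)\bigr)^{(p-1)/p},&\text{if }N=kp,\ p>1.\end{cases}
\end{equation*}
This is obtained by iterating the identity $v^{(j-1)}(t)=v^{(j-1)}(r_{0})-\int_{t}^{r_{0}}v^{(j)}(s)\,ds$, averaging $r_{0}$ over $[R/2,R]$ to replace the missing boundary value by an integral controlled by $\|v^{(j-1)}\|_{L^{p}(\sinh^{N-1})}$, and applying H\"older's inequality with weight $\sinh^{N-1}$; since $\sinh s\sim s$ near $0$, each step reproduces the power structure of the Euclidean radial lemma.

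Inserting this pointwise bound into $\|u\|_{L^{q}_{\sinh^{\theta}}(B_{R}^{\mathbb H})}^{q}=\omega_{N-1}\int_{0}^{R}|v(t)|^{q}\sinh^{N-1+\theta}(t)\,dt$ and using $\sinh t\sim t$ near $0$ gives the subcritical range $q<(\theta+N)p/(N-kp)$ directly; the endpoint $q=(\theta+N)p/(N-kp)$ is reached by interpolating the pointwise radial bound with the standard (unweighted) Sobolev embedding $W^{k,p}\hookrightarrow L^{Np/(N-kp)}$ via H\"older on the weighted measure. In the borderline case $N=kp$, $p>1$ the logarithmic growth is absorbed by $\sinh^{N-1+\theta}(t)$ for every $q<\infty$, yielding the $L^{q}$ bound in (3). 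Compactness in (3) is then obtained by splitting $B_{R}^{\mathbb H}$ into a neighbourhood of the origin, where the radial lemma supplies a uniform tail estimate, and its complement, on which Rellich--Kondrachov on $[\epsilon,R]$ applies. For (4), with $N=k$ and $p=1$, H\"older is unavailable, and the cleanest route is the norm equivalence to $W^{N,1}_{\mathbb R,\mathrm{rad}}(B_{\bar R}^{\mathbb R})$ mentioned above, followed by Theorem~\ref{theoDSM}(4).

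The main obstacle is the radial lemma, precisely because no boundary condition is imposed on $u$: with Dirichlet data the iteration would be routine; without it, the boundary terms $v^{(j)}(R)$ must be absorbed, and the averaging of $r_{0}$ over $[R/2,R]$ is the key device that trades pointwise boundary values for the integral norms supplied by Theorem~\ref{theo10}. Keeping track of the correct powers through $k$ successive FTC/H\"older iterations against the $\sinh^{N-1}$ weight, and ensuring that the borderline logarithmic growth survives the weighted $L^{q}$ integration for every finite $q$, are the technically delicate points.
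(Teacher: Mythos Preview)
Your plan is correct and follows essentially the same architecture as the paper: reduce to the radial profile via Theorem~\ref{theo10}, establish the pointwise radial lemma, and combine it with the unweighted Sobolev embedding for the critical exponent (exactly the splitting $|u|^{p^*_\theta}\sinh^\theta=|u|^{\theta p/(N-kp)}\sinh^\theta\cdot|u|^{Np/(N-kp)}$ the paper uses).

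The one noteworthy difference is how the radial lemma is obtained. You propose to iterate FTC/H\"older directly against the $\sinh^{N-1}$ weight, handling the missing boundary values by averaging $r_0$ over $[R/2,R]$. The paper instead short-circuits this step: from $t\le\sinh t\le\frac{\sinh R}{R}\,t$ on $[0,R]$ it gets $W^{k,p}((0,R),\sinh^{N-1}(t))=W^{k,p}((0,R),t^{N-1})$ with equivalent norms, and then simply imports the radial lemmata already proved in \cite{zbMATH05978504} for the power weight $t^{N-1}$. This is cleaner than re-running the iteration, and it is precisely the bounded-$R$ version of your own first-paragraph shortcut (conformal factor bounded above and below on $\overline{B_{\bar R}^{\mathbb R}}$). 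Your intrinsic derivation is fine and, as you say, has the advantage of rehearsing the mechanism needed for $R=\infty$; but for the present theorem the paper's transfer to the power-weight case is the more economical route.
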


\begin{remark}\label{remarkct}
    Assuming no weights in the Lebesgue space ($\theta=0$), the previous result implies that $W^{k,p}_{\mathbb H,\mathrm{rad}}(B_R^{\mathbb H}) \hookrightarrow L^q(B_R^{\mathbb H})$ for every $1 \leq q \leq \frac{Np}{N-kp}$. This follows from the classical Sobolev embedding theorem without the need for radial symmetry (see \cite[Theorem 10.1]{zbMATH01447265}).
\end{remark}

\begin{remark}
For an embbeding version from $W^{k,p}((0,R),\sinh^{N-1}(t))$ to $L^q_{\sinh^{\theta}}(0,R)$, we refer the reader to Theorem \ref{theoweighted}.
\end{remark}

We establish the following radial lemmata (see Proposition \ref{proprls} and Corollary \ref{corrlnkp}) to prove the Theorem \ref{theo11}. If $N>kp$, then there exists $C>0$ such that for all $u\in W^{k,p}_{\mathbb H,\mathrm{rad}}(B_R^\mathbb H)$,
\begin{equation}\label{rls}
|u(x)|\leq C\dfrac{\|u\|_{W_{\mathbb H}^{k,p}(B_R^{\mathbb H})}}{\sinh^\frac{N-kp}pd(x)},\quad\forall x\in \overline{B_R^{\mathbb H}}\backslash\{0\}.
\end{equation}
Equivalently, it also holds that
\begin{equation}\label{asfkj111}
|u(x)|\leq C\dfrac{\|u\|_{W^{k,p}_{\mathbb H}(B_R^{\mathbb H})}}{|x|^{\frac{N-kp}{p}}},\quad\forall x\in\overline{B_R^{\mathbb H}}\backslash\{0\}.
\end{equation}
If $N=kp$ and $p>1$, then there exists $C>0$ such that for all $u\in W^{k,p}_{\mathbb H,\mathrm{rad}}(B_R^{\mathbb H})$,
\begin{equation}
|u(x)|\leq C\|u\|_{W^{k,p}_{\mathbb H}(B_R^{\mathbb H})}\left[\left(\log\frac{\overline R}{|x|}\right)^{\frac{p-1}p}+1\right],\quad\forall x\in \overline{B_R^{\mathbb H}}\backslash\{0\}.\label{rlsc}
\end{equation}
These radial lemmata \eqref{asfkj111} and \eqref{rlsc} were expected because in the Euclidean case, it was proved the same growth type (see \cite[equations (1.1) and (1.2)]{zbMATH05978504}). The reason is that the vital part of these radial lemmata is the estimate near the origin, where the hyperbolic space is similar to the Euclidean space.

\begin{cor}\label{corcompact}
Assume $N>kp$ and $\theta\geq0$. If $1\leq q<\frac{p(N+\theta)}{N-kp}$, then $W^{k,p}_{\mathbb H,\mathrm{rad}}(B_R^{\mathbb H})\hookrightarrow L^q_{\sinh^{\theta}}(B_R^{\mathbb H})$ is compact.
\end{cor}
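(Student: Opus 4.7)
The plan is to combine the critical continuous embedding from Theorem \ref{theo11}(2) with the classical Rellich--Kondrachov theorem through a Hölder interpolation, exploiting the fact that on the \emph{finite} geodesic ball $B_R^{\mathbb H}$ the weight $\sinh^\theta d(\cdot)$ is globally bounded by $\sinh^\theta R$.

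First, I would fix a bounded sequence $(u_n)\subset W^{k,p}_{\mathbb H,\mathrm{rad}}(B_R^{\mathbb H})$ and set $q^{\ast}:=p(N+\theta)/(N-kp)$. Theorem \ref{theo11}(2) immediately gives boundedness of $(u_n)$ in $L^{q^{\ast}}_{\sinh^\theta}(B_R^{\mathbb H})$. Since $R<\infty$ forces $\overline R=\tanh(R/2)<1$, the conformal factor $2/(1-|x|^2)$ is smooth and bounded above and below on $\overline{B_{\overline R}^{\mathbb R}}$; consequently the hyperbolic and Euclidean Sobolev norms are equivalent on $B_R^{\mathbb H}$, and the standard Rellich--Kondrachov theorem yields a compact embedding $W^{k,p}(B_R^{\mathbb H})\hookrightarrow L^{q_0}(B_R^{\mathbb H})$ for every $q_0\in[1,Np/(N-kp))$. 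Passing to a subsequence, $u_n\to u$ in $L^{q_0}(B_R^{\mathbb H})$ and almost everywhere, and the uniform weight bound upgrades this to $u_n\to u$ in $L^{q_0}_{\sinh^\theta}(B_R^{\mathbb H})$.

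For the target exponent $q\in[1,q^{\ast})$ I would distinguish two cases. If $q<Np/(N-kp)$, take $q_0=q$ and finish at once. Otherwise, pick any $q_0\in[1,Np/(N-kp))$, write $q=\lambda q^{\ast}+(1-\lambda)q_0$ with $\lambda=(q-q_0)/(q^{\ast}-q_0)\in(0,1)$, and apply Hölder's inequality to the pointwise factorization $|u_n-u|^{q}\sinh^\theta d(x)=\bigl(|u_n-u|^{q^{\ast}}\sinh^\theta d(x)\bigr)^{\lambda}\bigl(|u_n-u|^{q_0}\sinh^\theta d(x)\bigr)^{1-\lambda}$ to obtain
\begin{equation*}
\|u_n-u\|_{L^{q}_{\sinh^\theta}}^{q}\leq \|u_n-u\|_{L^{q^{\ast}}_{\sinh^\theta}}^{\lambda q^{\ast}}\,\|u_n-u\|_{L^{q_0}_{\sinh^\theta}}^{(1-\lambda)q_0}.
\end{equation*}
The first factor is uniformly bounded (with $u\in L^{q^{\ast}}_{\sinh^\theta}$ ensured by the a.e.\ convergence and Fatou's lemma), while the second tends to zero by the previous step, giving $u_n\to u$ in $L^{q}_{\sinh^\theta}$. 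The only point requiring care is the norm equivalence underpinning the applicability of classical Rellich--Kondrachov, which is precisely where the hypothesis $R<\infty$ enters decisively; beyond this, no additional analytic machinery is needed on top of Theorem \ref{theo11}(2).
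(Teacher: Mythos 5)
Your proof is correct and follows essentially the same strategy as the paper: compact convergence at a low exponent combined with H\"older interpolation against the critical continuous embedding $W^{k,p}_{\mathbb H,\mathrm{rad}}(B_R^{\mathbb H})\hookrightarrow L^{p^*_\theta}_{\sinh^\theta}(B_R^{\mathbb H})$ from Theorem \ref{theo11}. The only cosmetic differences are that the paper obtains the compact step as $W^{k,p}_{\mathbb H}(B_R^{\mathbb H})\hookrightarrow L^1(B_R^{\mathbb H})$ by citing the classical embedding theorem on manifolds rather than via equivalence with the Euclidean ball and Rellich--Kondrachov, and it handles the weight by normalizing with $\sinh^{\theta/q}(R)$ instead of your direct weighted H\"older factorization.
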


To prove embedding results of the Sobolev spaces into weighted Lebesgue spaces on hyperbolic spaces,   we need to obtain asymptotic decay when $d(x)\to\infty$. Inspired by \eqref{dlrn}, we prove a decay lemma for the hyperbolic space. More specifically, for any $u\in W^{1,p}_{\mathbb H,\mathrm{rad}}(\mathbb H^N)$ it holds
\begin{equation}\label{aksfn}
|u(x)|\leq \left(\frac{p}{\omega_{N-1}}\right)^{\frac1p}\|u\|_{L^p(\mathbb H^N)}^{\frac{p-1}p}\|\nabla u\|_{L^p(\mathbb H^N)}^{\frac1p}\sinh^{\frac{1-N}p}d(x),\quad\forall x\in\mathbb H^N\backslash\{0\}.
\end{equation}
Note that this growth on the right-hand side of \eqref{aksfn} is different from \eqref{dlrn}, the decay lemma on $\mathbb R^N$. That is because the hyperbolic distance expands faster as you move away from the origin than in the Euclidean case. Therefore, using this decay lemma, we can obtain our Sobolev embeddings when the domain is the whole $\mathbb H^N$. The statement of this result is as follows:

\begin{theo}\label{theo12}
Assume $\theta\geq0$, $p\geq1$ , and that $k\geq1$ is an integer.
\begin{flushleft}
$\mathrm{(1)}$ \justifying{Every function $u\in W^{k,p}_{\mathbb H,\mathrm{rad}}(\mathbb H^N)$ is almost everywhere equal to a function $U$ in $C^{k-1}(\mathbb H^N\backslash\{0\})$. In addition, all partial derivatives of $U$ of order $k$ (in the classical sense) exist almost everywhere for $d(x)\in(0,\infty)$.}

\noindent$\mathrm{(2)}$ If $N>kp$, then the following continuous embedding holds:
\begin{equation*}
W^{k,p}_{\mathbb H,\mathrm{rad}}(\mathbb H^N)\hookrightarrow L^q_{\sinh^\theta}(\mathbb H^N) \quad \text{if} \quad p\leq q\leq p^*,
\end{equation*}
where
\begin{equation*}
    p^*=\dfrac{(\theta+N)p}{N-kp}.
\end{equation*}
Moreover, it is a compact embedding if one of the following two conditions is fulfilled:
\begin{flushleft}
$\mathrm{(i)}$ $\theta=N-1$ and $p<q<p^*$;

\noindent$\mathrm{(ii)}$ $\theta<N-1$ and $p\leq q<p^*$.
\end{flushleft}
\noindent$\mathrm{(3)}$ \justifying{If $N=kp$, then the following continuous embedding holds:}
\begin{equation*}
W^{k,p}_{\mathbb H,\mathrm{rad}}(\mathbb H^N)\hookrightarrow L^q_{\sinh^\theta}(\mathbb H^N)\quad \text{if}\quad p\leq q<\infty.
\end{equation*}
Moreover, it is compact embedding if one of the following two conditions is fulfilled:
\begin{flushleft}
\noindent$\mathrm{(i)}$ $\theta=N-1$ and $q>p$;

\noindent$\mathrm{(ii)}$ $\theta<N-1$ and $q\geq p$.
\end{flushleft}
\end{flushleft}
\end{theo}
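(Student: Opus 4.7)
The plan is to decompose $\mathbb H^N$ into the unit geodesic ball $B_1^{\mathbb H}$ and its complement, reducing the interior analysis to Theorem \ref{theo11} and handling the exterior via the radial decay estimate \eqref{aksfn}. For item (1), I would apply Theorem \ref{theo11}(1) on every $B_R^{\mathbb H}$ with $R \in (0, \infty)$, using that $u \in W^{k,p}_{\mathbb H,\mathrm{rad}}(\mathbb H^N)$ restricts to $W^{k,p}_{\mathbb H,\mathrm{rad}}(B_R^{\mathbb H})$; uniqueness of the $C^{k-1}$ representative on $\overline{B_R^{\mathbb H}} \setminus \{0\}$ forces compatibility across nested balls, so the local representatives patch into a single $U \in C^{k-1}(\mathbb H^N \setminus \{0\})$ with the claimed almost-everywhere classical differentiability of order $k$.

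For the continuous embedding in item (2), I would split
\[
\int_{\mathbb H^N} |u|^q \sinh^\theta(d(x))\, dV_g = \int_{B_1^{\mathbb H}} |u|^q \sinh^\theta(d(x))\, dV_g + \int_{\mathbb H^N \setminus B_1^{\mathbb H}} |u|^q \sinh^\theta(d(x))\, dV_g.
\]
The interior integral is bounded by $C \|u\|_{W^{k,p}(B_1^{\mathbb H})}^q$ via Theorem \ref{theo11}(2), valid for all $1 \leq q \leq p^*$. On the exterior, since $W^{k,p} \hookrightarrow W^{1,p}$ with $\|u\|_{W^{1,p}} \leq \|u\|_{W^{k,p}}$, I would apply \eqref{aksfn}, factor $|u|^q = |u|^{q-p}|u|^p$, and substitute the pointwise bound on $|u|^{q-p}$ to get
\[
\int_{\mathbb H^N \setminus B_1^{\mathbb H}} |u|^q \sinh^\theta(d(x))\, dV_g \leq C \|u\|_{W^{k,p}}^{q-p} \int_{\mathbb H^N \setminus B_1^{\mathbb H}} |u|^p \sinh^{\theta - (q-p)(N-1)/p}(d(x))\, dV_g;
\]
in the admissible range of $q$, the exponent of $\sinh$ is non-positive on the exterior, so this last integral is bounded by $\|u\|_{L^p(\mathbb H^N)}^p \leq \|u\|_{W^{k,p}}^p$.

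For the compactness in items (2) and (3), I would combine Corollary \ref{corcompact} (local compactness on each $B_R^{\mathbb H}$) with a uniform tail estimate drawn from \eqref{aksfn}. A sequence bounded in $W^{k,p}_{\mathbb H,\mathrm{rad}}(\mathbb H^N)$ admits, by a diagonal extraction, a subsequence convergent in $L^q_{\sinh^\theta}(B_R^{\mathbb H})$ for every finite $R$; the decay estimate then yields a uniform-in-$n$ control of $\int_{d(x)>R} |u_n|^q \sinh^\theta(d(x))\, dV_g$, which tends to $0$ as $R \to \infty$ in precisely the two regimes described by conditions (i) and (ii), upgrading the local convergence to a global one. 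Item (3) (the critical case $N = kp$) proceeds by the same scheme, with Theorem \ref{theo11}(3)--(4) replacing Theorem \ref{theo11}(2) on the interior, while the exterior analysis is unchanged.

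The main obstacle is the exponent accounting on the exterior: one must carefully track the product of the decay rate $\sinh^{-(N-1)/p}$, the weight $\sinh^\theta$, and the volume factor $\sinh^{N-1}$, isolating the sharp thresholds at which the resulting integrand is summable (for continuity) and admits uniform tail decay (for compactness). This accounting is what dictates the strict versus non-strict inequalities on $q$ in conditions (i) and (ii), and the boundary cases $q = p$ and $q = p^*$ are the most delicate points of the analysis.
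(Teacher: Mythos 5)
Your skeleton is the same as the paper's: interior estimates on a fixed ball via Theorem \ref{theo11}, exterior control via the decay lemma \eqref{aksfn}, and, for compactness, local compactness plus a uniform tail bound. The difference is that the paper first proves the case $k=1$ in full (Proposition \ref{proppf12}) and then obtains $k\geq 2$ by induction, bootstrapping through the embedding $W^{k,p}_{\mathbb H,\mathrm{rad}}(\mathbb H^N)\hookrightarrow W^{1,\overline p}_{\mathbb H,\mathrm{rad}}(\mathbb H^N)$ with $\overline p=\frac{Np}{N-(k-1)p}$ and using $\frac{(\theta+N)\overline p}{N-\overline p}=p^*$, whereas you treat all $k$ at once and only ever use the $W^{1,p}$-decay of $u$ outside the unit ball. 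Your treatment of item (1) by patching the local $C^{k-1}$ representatives from Theorem \ref{theo11}(1) is fine.

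The genuine gap is in the exterior exponent accounting, exactly the point you defer to the end. After inserting \eqref{aksfn} into $|u|^{q-p}$, the surviving weight outside is $\sinh^{\theta-\frac{(N-1)(q-p)}{p}}d(x)$, and you claim this exponent is non-positive throughout the admissible range of $q$. That is false near the lower endpoint: at $q=p$ the exponent equals $\theta$, which is positive whenever $\theta>0$; non-positivity requires $q\geq\frac{(\theta+N-1)p}{N-1}$, so your continuity argument covers only that part of $[p,p^*]$. The same bookkeeping undercuts your compactness step: a tail that goes to $0$ uniformly by this absorption needs the strict inequality $q>\frac{(\theta+N-1)p}{N-1}$, which is strictly stronger than the regimes (i)/(ii) of the theorem (under (i), $\theta=N-1$, it forces $q>2p$ instead of $q>p$). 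The paper arrives at the thresholds in (i)/(ii) by splitting differently in \eqref{rews}: the decay is used only to extract the factor $\sinh^{-\frac{(N-1)(q-p)}{p}}(R)$ evaluated at the cut-off radius, and the remaining weight is transferred via $\sinh^\theta d(x)\leq\sinh^{\theta-(N-1)}(R)\,\sinh^{N-1}d(x)$ (using $\theta\leq N-1$), so the net power of $\sinh(R)$ is $\frac{(N-1)(q-p)}{p}+N-1-\theta$, positive exactly under (i)/(ii); the quantity left inside the integral is then $\int_{d(x)\geq R}|u|^{p}\sinh^{N-1}d(x)\,\mathrm dV_g$ rather than $\|u\|_{L^p}^p$, and controlling that factor (together with the bootstrap to $W^{1,\overline p}$ for $k\geq2$) is the delicate part of the paper's argument that your single absorption bypasses. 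As written, your proposal does not reach the stated range $p\leq q\leq p^*$, nor the compactness regimes (i)/(ii), whenever $\theta>0$.
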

\begin{remark}
Similarly to Remark \ref{remarkct}, for $\theta=0$, the previous result implies that $W^{k,p}_{\mathbb H,\mathrm{rad}}(\mathbb H^N) \hookrightarrow L^q(\mathbb H^N)$ for every $p \leq q \leq \frac{Np}{N-kp}$. This follows from the classical Sobolev embedding theorem without the need for radial symmetry (see \cite[Theorem 3.2]{zbMATH01447265}).
\end{remark}

\begin{remark}
For an embedding version from $W^{k,p}((0,\infty),\sinh^{N-1}(t))$ to $L^q_{\sinh^\theta}(0,\infty)$, we refer to Theorem \ref{theo122}.
\end{remark}
 
While Sobolev-type inequalities play a crucial role in the study of partial differential equations,  some problems in geometric analysis involve the limiting cases of these inequalities. One such limiting case is the Trudinger-Moser inequality, which was first examined in \cite{zbMATH03232364, zbMATH03228089, zbMATH03261965, zbMATH03337983}. Motivated by issues related to the Gauss curvature equations, Moser refined Trudinger's inequality as follows:

\begin{equation}\label{moser}
\sup_{\{u\in W^{1,N}_0(\Omega)\colon \|\nabla u\|_{L^N(\Omega)}\leq1\}}\int_\Omega e^{\alpha|u|^{\frac{N}{N-1}}}\mathrm dx\left\{\begin{array}{ll}
     <\infty,&\mbox{if }\alpha\leq\alpha_N,  \\
     =\infty,&\mbox{if }\alpha>\alpha_N,
\end{array}\right.
\end{equation}
where $\alpha_N=N\omega_{N-1}^{\frac{1}{N-1}}$ and $\Omega\subset\mathbb R^N$ is a domain with finite volume.

The result mentioned above has been generalized in several directions. Adams \cite{zbMATH03337983}  obtained a generalized version of \eqref{moser} for Sobolev spaces with higher order derivatives. See also  \cite{zbMATH06252414} and references therein. This inequality \eqref{moser} has been extended to Riemannian manifolds (see, for example, \cite{zbMATH00490628,zbMATH05194595,zbMATH00447008,zbMATH07384223}). In particular, Trudinger-Moser and Adams-type inequalities have been established on hyperbolic spaces in recent years. In \cite{zbMATH05839583}, Mancini and Sandeep, for $N=2$, proved that a sharp Moser-Trudinger inequality holds on a conformal disc. Furthermore, Mancini, Sandeep, and Tintarev \cite{zbMATH06214509} proved a sharp Moser-Trudinger inequality in the hyperbolic space for all $N\geq2$. Lu and Tang \cite{zbMATH06256981} established both the critical and subcritical  Trudinger-Moser inequalities on bounded domains as well as on the entire hyperbolic spaces for $N\ge 2$, including the singular versions. Sharp Trudinger-Moser inequalities of exact growth have also been proved on hyperbolic space by Lu and Tang \cite{zbMATH05839583}. Wang and Ye \cite{zbMATH06029076}, along with Liang et al. \cite{zbMATH07247135}, have also established sharp Hardy-Trudinger-Moser inequalities on hyperbolic space.

For an Adams-type inequality in hyperbolic space, we refer to Karmakar and Sandeep \cite{zbMATH06536855},  Ng\^o and Nguyen \cite{zbMATH07318489} and Hardy-Adams inequalities by Li, Lu and Yang \cite{zbMATH06776234}, \cite{zbMATH06898920}, \cite{zbMATH07189073} on real hyperbolic spaces.
Lu and Yang studied these inequalities on complex hyperbolic space \cite{zbMATH07557162} and Flynn, Lu, and Yang on quaternionic and octonionic hyperbolic spaces \cite{zbMATH07835927}.
 
\medskip

We now return to consider the weighted  Trudinger-Moser and Adams type inequalities for symmetric functions on bounded balls in hyperbolic space when no boundary conditions is imposed. 
 A similar problem has been addressed by the current authors in \cite[Theorem 1.2]{arXiv:2302.02262}. Specifically, for $N=kp$ and $0<R<\infty$, Theorem \ref{theo11} states that $W^{k,p}_{\mathbb H,\mathrm{rad}}(B_R^{\mathbb H})\hookrightarrow L^q_{\sinh^\theta}(B_R^{\mathbb H})$ for all $1\leq q<\infty$ and $\theta\geq0$. However, it is known that $W^{1,p}_{\mathbb H,\mathrm{rad}}(B_R^{\mathbb H}) \not \hookrightarrow L^{\infty}(B_R^{\mathbb H})$, as demonstrated by the function $\phi(x)=\log(\log(\frac{e\overline R}{|x|}))$. This observation raises a natural question: what is the maximal possible growth for a function $g(s)$ such that $u\in W^{k,p}_{\mathbb H,\mathrm{rad}}(B_R^{\mathbb H})$ implies $\int_{B_R^{\mathbb H}} g(u) \, \mathrm dV_g < \infty$ when the functions $u$ are not assumed to have a boundary condition? 

Similar to classical works\cite{zbMATH03337983,zbMATH03261965,zbMATH04099653}, we demonstrate that the optimal growth is exponential. To formalize this, consider the following supremum 
\begin{equation}\label{e33}
\mathcal{L}_{\mu,\mathbb H}:= \sup_{\substack{u\in W^{k,p}_{\mathbb H,\mathrm{rad}}(B_R^{\mathbb H})\\ \|u\|_{W^{k,p}(B_R^{\mathbb H})}\leq 1}}\int_{B_R^{\mathbb H}}e^{\mu|u|^{\frac{p}{p-1}}}\sinh^\theta d(x) \mathrm dV_g.
\end{equation}

It is important to note that our analysis does not impose any boundary conditions for the functions and their higher order derivatives. The norm $\|u\|_{W^{k,p}_{\mathbb H}(B_R^{\mathbb H})}$ used here differs from that one utilized by Adams in \cite{zbMATH04099653}. Before we analyze \eqref{e33}, we should consider a similar problem for the weighted Sobolev space $W^{k,p}((0,R),\sinh^{N-1}(t))$, which consists in the study the following supremum
\begin{equation}\label{e34}
\mathcal{L}_\mu:=\sup_{\substack{v\in W^{k,p}((0,R),\sinh^{N-1}(t))\\ \|v\|_{W^{k,p}_{\sinh^{N-1}}}\leq1}}\int_0^Re^{\mu|v|^{\frac{p}{p-1}}}\sinh^\theta(t)\mathrm dt.
\end{equation}

As in celebrate works \cite{zbMATH03337983,zbMATH04099653},  we  determine the optimal limit constant $\mu_0>0$ such that \eqref{e34} is finite for $\mu<\mu_0$ and \eqref{e34} is infinite for $\mu>\mu_0$. The following main theorem establishes an Adams-type inequality for functions in the weighted Sobolev space $W^{k,p}((0,R),\sinh^{N-1}(t))$, which includes higher order derivatives and does not impose any boundary conditions.

\begin{theo}\label{theo16}
Let $\theta>-1$ and $W^{k,p}((0,R),\sinh^{N-1}(t))$ be the weighted Sobolev space with $N=kp$, $0<R<\infty$, and $p>1$.
\begin{flushleft}
    $\mathrm{(a)}$ For all $\mu\geq0$ and $u\in W^{k,p}((0,R),\sinh^{N-1}(t))$, we have $\exp(\mu|u|^{\frac{p}{p-1}})\in L^1_{\sinh^{\theta}}(0,R)$.\\
    $\mathrm{(b)}$ If $0\leq\mu<\mu_0$, then $\mathcal{L}_\mu$ is finite, where
    \begin{equation*}
        \mu_{0}:=(\theta+1)[(k-1)!]^{\frac{p}{p-1}}.
    \end{equation*}
    \justifying{Moreover, $\mathcal{L}_\mu$ is attained by a nonnegative function $u_0\in W^{k,p}((0,R),\sinh^{N-1}(t))$ with $\|u_0\|_{W^{k,p}_{\sinh^{N-1}}}=1$.}\\
    $\mathrm{(c)}$ If $\mu>\mu_0$, then $\mathcal{L}_\mu=\infty$.
\end{flushleft}
\end{theo}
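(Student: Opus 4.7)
The plan is to derive a single sharp pointwise bound on $|v(t)|$ and then obtain each of (a), (b), (c) as a consequence. I would fix $t_0\in(R/2,R)$ and apply Taylor's formula with integral remainder,
\begin{equation*}
v(t) = \sum_{j=0}^{k-1}\frac{v^{(j)}(t_0)}{j!}(t-t_0)^j + \frac{(-1)^k}{(k-1)!}\int_t^{t_0}(s-t)^{k-1}v^{(k)}(s)\,ds,\quad 0<t<t_0.
\end{equation*}
On $(R/2,R)$ the weight $\sinh^{N-1}$ is bounded above and below, so one-dimensional Sobolev embedding gives $|v^{(j)}(t_0)|\leq C\|v\|_{W^{k,p}_{\sinh^{N-1}}}$ for $j\leq k-1$. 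Applying H\"older to the remainder with weight $\sinh^{N-1}$ and exploiting $N=kp$ together with $\sinh(s)\sim s$ near $0$, the core calculation is that
\begin{equation*}
I(t):=\int_t^{t_0}(s-t)^{(k-1)p/(p-1)}\sinh^{-(kp-1)/(p-1)}(s)\,ds
\end{equation*}
has integrand asymptotic to $s^{-1}$ for $s\gg t$, so that $I(t)=\log(R/t)+O(1)$ with leading coefficient exactly $1$. Raising the resulting estimate to the power $p/(p-1)$ and absorbing cross terms by Young's inequality then yields the sharp pointwise bound
\begin{equation*}
|v(t)|^{p/(p-1)}\leq A_{v,\tau}+\frac{\|v^{(k)}\|_{L^p_{\sinh^{N-1}}(0,\tau)}^{p/(p-1)}+\varepsilon}{[(k-1)!]^{p/(p-1)}}\log(R/t),
\end{equation*}
valid for every $\varepsilon>0$ and every $\tau\in(0,R/2)$, where $A_{v,\tau}$ absorbs the contribution of $v^{(k)}$ on $(\tau,t_0)$, the Taylor boundary data, and the Young correction.

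With this bound in hand, part (a) follows: given any $\mu\geq 0$, absolute continuity of the integral forces $\|v^{(k)}\|_{L^p_{\sinh^{N-1}}(0,\tau)}\to 0$ as $\tau\to 0$, so $\tau$ and $\varepsilon$ can be chosen so that the coefficient of $\log(R/t)$ in the bound above satisfies $\mu\cdot(\text{coefficient})<\theta+1$; then $\exp(\mu|v|^{p/(p-1)})\leq C(R/t)^{\theta+1-\delta}$ is integrable against $\sinh^\theta(t)\sim t^\theta$ near $0$. For the upper bound in (b), whenever $\|v\|_{W^{k,p}_{\sinh^{N-1}}}\leq 1$ the coefficient is at most $1/[(k-1)!]^{p/(p-1)}+\varepsilon$, and $\int_0^R t^{\theta-\mu/[(k-1)!]^{p/(p-1)}-\mu\varepsilon}\,dt$ is finite precisely when $\mu<\mu_0=(\theta+1)[(k-1)!]^{p/(p-1)}$ (after sending $\varepsilon\downarrow 0$). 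For attainment I take a maximizing sequence $v_n$, extract $v_n\rightharpoonup v_0$ weakly in the reflexive space ($p>1$) with $\|v_0\|\leq 1$ by lower semicontinuity, and invoke the one-dimensional compact embedding (Theorem \ref{theoweighted}) into $L^q_{\sinh^\theta}(0,R)$ for every finite $q$ to obtain almost everywhere convergence; the subcritical bound in (b) applied at some $\mu'\in(\mu,\mu_0)$ delivers equi-integrability of $\{\exp(\mu|v_n|^{p/(p-1)})\sinh^\theta(t)\}$, so Vitali's theorem passes to the limit, and $|v_0|$ serves as the nonnegative maximizer.

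For part (c) I build a Moser-type concentrating sequence in the spirit of \cite{arXiv:2302.02262}: the task is to design $v_n\in W^{k,p}_{\sinh^{N-1}}$ with $\|v_n\|_{W^{k,p}_{\sinh^{N-1}}}\to 1$ for which $|v_n(t)|^{p/(p-1)}$ approaches the saturating profile $\log(R/t)/[(k-1)!]^{p/(p-1)}$ on a region $(0,a_n)$ with $a_n\to 0$. Once such a sequence exists,
\begin{equation*}
\int_0^R \exp(\mu|v_n|^{p/(p-1)})\sinh^\theta(t)\,dt\geq c\int_0^{a_n}t^{\theta-\mu/[(k-1)!]^{p/(p-1)}}\,dt\to\infty
\end{equation*}
whenever $\mu>\mu_0$, because the integrand exponent is then strictly less than $-1$. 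For $k=1$ the classical Moser truncation of $c(\log n)^{-1/p}\log(R/t)$ works after a mild modification near $t=R$; for $k\geq 2$ one must instead prescribe the profile at the level of $v_n^{(k)}$ and integrate $k$ times from $t=R$ with boundary data tuned so that $\|v_n^{(k)}\|_{L^p_{\sinh^{N-1}}}\to 1$ dominates all lower-order norms, following the construction in \cite{arXiv:2302.02262}.

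The most delicate point is confirming the precise leading coefficient $1$ in the asymptotics of $I(t)$, because the sharp value $\mu_0=(\theta+1)[(k-1)!]^{p/(p-1)}$ depends entirely on this constant; any slack would widen the admissible range of $\mu$ in (b). A secondary difficulty, for $k\geq 2$, is verifying in the Moser construction that iterated integration of the concentrating $v_n^{(k)}$ without boundary conditions produces lower-order derivatives whose weighted $L^p$ norms remain negligible compared to $\|v_n^{(k)}\|_{L^p_{\sinh^{N-1}}}$, which requires sharp asymptotic estimates reminiscent of the radial lemmata \eqref{rls} at every intermediate order of differentiation.
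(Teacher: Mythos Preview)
Your proposal is correct and follows the same overall architecture as the paper: a sharp pointwise bound $|v(t)|\leq\frac{1}{(k-1)!}(\log(R/t))^{(p-1)/p}\|v^{(k)}\|+C\|v\|$, integration against $\sinh^\theta$ to get (a) and (b), and an Adams--Moser concentrating sequence for (c). The technical routes differ in two places. For the pointwise bound, the paper (Proposition~\ref{proprls1}) integrates by parts $k-1$ times against $\sinh^{k-1}(s)$ and then applies H\"older, exploiting that $\int_t^R\sinh^{-1}(s)\,ds=\log\frac{\tanh(R/2)}{\tanh(t/2)}$; your Taylor-remainder plus H\"older reaches the same sharp constant more directly and is arguably cleaner in one dimension. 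For attainment, the paper uses the elementary inequality $|e^x-e^y|\leq|x-y|(e^x+e^y)$ together with H\"older and the uniform bound at a slightly larger $q\mu<\mu_0$, rather than Vitali; both arguments are standard and interchangeable. Your sketch of (c) is on target but vague: the paper's explicit sequence is $\psi_{n,\varepsilon}(t)=H\bigl(\frac{\log(R/t)}{\log n}\bigr)$ with $H$ a smoothed step, and the bulk of the work is the estimate $\|\psi_{n,\varepsilon}^{(k)}\|^p_{L^p_{\sinh^{N-1}}}\leq[(k-1)!]^p(\log n)^{1-p}[1+O(\varepsilon)+O((\log n)^{-1})]$ together with $\|\psi_{n,\varepsilon}^{(i)}\|^p=O((\log n)^{-p})$ for $i<k$---precisely the ``iterated-integration'' control you flag as the secondary difficulty. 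One small omission: you do not argue that the maximizer has norm exactly $1$; the paper handles this by noting that $\|v_0\|<1$ would let $v_0/\|v_0\|$ beat the supremum.
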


By applying similar arguments presented in the previous theorem, we derive an Adams-type inequality for functions in $W^{k,p}(B_R^{\mathbb H})$ as our final main result. 
However, it is essential to note that the unboundedness of $\mathcal{L}_{\mu,\mathbb H}$ is established only for sufficiently large values of $\mu$. For a detailed explanation of this assumption, please refer to Remark \ref{remark}.

\begin{theo}\label{theo15}
Let $\theta>-N$ and $W^{k,p}_{\mathbb H,\mathrm{rad}}(B_R^{\mathbb H})$ be the radial Sobolev space with $N=kp$, $0<R<\infty$, and $p>1$.
\begin{flushleft}
$\mathrm{(a)}$ For all $\mu\geq0$ and $u\in W^{k,p}_{\mathbb H,\mathrm{rad}}(B_R^{\mathbb H}),$ we have $\exp(\mu|u|^{\frac{p}{p-1}})\in L^1_{\sinh^\theta}(B_R^{\mathbb H})$.\\
$\mathrm{(b)}$ If $0\leq\mu<\mu_{0,\mathbb H}$, then $\mathcal{L}_{\mu,\mathbb H}$ is finite, where
\begin{equation*}
\mu_{0,\mathbb H}:=(\theta+N)[\omega_{N-1}^{\frac{1}{p}}(k-1)!]^{\frac{p}{p-1}}.
\end{equation*}
\justifying{Moreover, assuming $\theta\geq0$, $\mathcal{L}_{\mu,\mathbb H}$ is attained by a nonnegative function $u_0\in W^{k,p}_{\mathbb H,\mathrm{rad}}(B_R^{\mathbb H})$ with $\|u_0\|_{W^{k,p}_{\mathbb H}(B_R^{\mathbb H})}=1$.}\\
$\mathrm{(c)}$ If $\mu>C_0^{\frac{p}{p-1}}(\theta+N)[(k-1)!]^{\frac{p}{p-1}}$, where $C_0$ is the constant associated with the continuous embedding $W^{k,p}((0,R),\sinh^{N-1}(t))\hookrightarrow W^{k,p}_{\mathbb H,\mathrm{rad}}(B_R^{\mathbb H})$ as provided in Theorem \ref{theo10}, then $\mathcal{L}_{\mu,\mathbb H}=\infty$.
\end{flushleft}
\end{theo}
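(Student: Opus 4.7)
\medskip

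The plan is to reduce all three parts to the one-dimensional Adams-type inequality in Theorem \ref{theo16} via the polar coordinate formula \eqref{eqpolar}, the pointwise estimate $|\nabla^{j} u|_g\geq |v^{(j)}(d(x))|$ from Theorem \ref{theouv}, and the two-sided passage between $W^{k,p}_{\mathbb H,\mathrm{rad}}(B_R^{\mathbb H})$ and $W^{k,p}((0,R),\sinh^{N-1}(t))$ supplied by Theorem \ref{theo10} (with embedding constant $C_0$). For any radial $u(x)=v(d(x))$, polar coordinates give
\begin{equation*}
\int_{B_R^{\mathbb H}}e^{\mu|u|^{\frac{p}{p-1}}}\sinh^{\theta}d(x)\,\mathrm dV_g=\omega_{N-1}\int_0^R e^{\mu|v(t)|^{\frac{p}{p-1}}}\sinh^{\theta+N-1}(t)\,\mathrm dt,
\end{equation*}
so the effective one-dimensional weight exponent is $\theta'=\theta+N-1$, and the hypothesis $\theta>-N$ rewrites as $\theta'>-1$, matching the setting of Theorem \ref{theo16}.

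\smallskip

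Part (a) then follows immediately: Theorem \ref{theouv} places $v$ in $W^{k,p}((0,R),\sinh^{N-1}(t))$, and Theorem \ref{theo16}(a) applied with weight $\theta'$ makes the right-hand integral finite for every $\mu\geq 0$. For the finiteness half of (b), combining Theorem \ref{theouv} with \eqref{eqpolar} yields $\|u\|_{W^{k,p}_{\mathbb H}(B_R^{\mathbb H})}^p\geq \omega_{N-1}\|v\|_{W^{k,p}_{\sinh^{N-1}}}^p$, so $\|u\|_{W^{k,p}_{\mathbb H}}\leq 1$ forces $\tilde v:=\omega_{N-1}^{1/p} v$ to obey $\|\tilde v\|_{W^{k,p}_{\sinh^{N-1}}}\leq 1$. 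Substituting turns the exponent into $\mu\,\omega_{N-1}^{-1/(p-1)}|\tilde v|^{p/(p-1)}$, uniformly controlled by Theorem \ref{theo16}(b) exactly when $\mu\,\omega_{N-1}^{-1/(p-1)}<(\theta+N)[(k-1)!]^{p/(p-1)}$, i.e.\ $\mu<\mu_{0,\mathbb H}$. Part (c) is the mirror argument: setting $\mu'=\mu C_0^{-p/(p-1)}$ one has $\mu'>(\theta+N)[(k-1)!]^{p/(p-1)}=\mu_0$ by hypothesis, so Theorem \ref{theo16}(c) provides $v_n$ with $\|v_n\|_{W^{k,p}_{\sinh^{N-1}}}\leq 1$ whose one-dimensional exponential integrals at exponent $\mu'$ diverge; the continuous embedding gives $\|u_n\|_{W^{k,p}_{\mathbb H}}\leq C_0$ for $u_n(x)=v_n(d(x))$, and evaluating $\mathcal{L}_{\mu,\mathbb H}$ on $u_n/C_0$ reproduces precisely the divergent integral.

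\smallskip

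The only genuinely substantial step is the attainment claim in (b), which I plan to handle by a direct method. Take a maximizing sequence $(u_n)$ with $\|u_n\|_{W^{k,p}_{\mathbb H}}=1$ and extract $u_n\rightharpoonup u_0$ in $W^{k,p}_{\mathbb H,\mathrm{rad}}(B_R^{\mathbb H})$. The compact embedding $W^{k,p}_{\mathbb H,\mathrm{rad}}(B_R^{\mathbb H})\hookrightarrow L^q_{\sinh^{\theta}}(B_R^{\mathbb H})$ for every $q<\infty$ provided by Theorem \ref{theo11}(3), whose hypotheses $N=kp$, $p>1$, $\theta\geq 0$ coincide exactly with those in the statement, gives strong $L^q$ convergence and pointwise a.e.\ convergence along a subsequence. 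Choosing $r>1$ with $r\mu<\mu_{0,\mathbb H}$, the finiteness half of (b) applied at exponent $r\mu$ yields a uniform $L^r_{\sinh^{\theta}}$-bound on $e^{\mu|u_n|^{p/(p-1)}}$, hence equi-integrability; Vitali's theorem then passes the limit through the exponential. Weak lower semicontinuity gives $\|u_0\|_{W^{k,p}_{\mathbb H}}\leq 1$, while a scaling argument (replacing $u_0$ by $u_0/\|u_0\|$ if $\|u_0\|<1$, and ruling out $u_0\equiv 0$ because the zero function gives $\int\sinh^{\theta}d(x)\,\mathrm dV_g$, strictly less than the supremum) forces $\|u_0\|_{W^{k,p}_{\mathbb H}}=1$. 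The main obstacle is this equi-integrability step, and the hypothesis $\theta\geq 0$ is imposed in (b) precisely to unlock the compact embedding of Theorem \ref{theo11}(3) needed here.
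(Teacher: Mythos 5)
Your proposal is correct and follows essentially the same route as the paper: all three parts are reduced to Theorem \ref{theo16} through the polar coordinate identity and the two-sided norm comparison $\omega_{N-1}^{1/p}\|v\|_{W^{k,p}_{\sinh^{N-1}}}\leq\|u\|_{W^{k,p}(B_R^{\mathbb H})}\leq C_0\|v\|_{W^{k,p}_{\sinh^{N-1}}}$, with attainment handled by the direct method using the compact embedding of Theorem \ref{theo11}(3) (hence $\theta\geq0$) and a uniform bound at a slightly larger exponent $r\mu<\mu_{0,\mathbb H}$. The only cosmetic difference is that you pass to the limit via Vitali's theorem, whereas the paper uses $|e^x-e^y|\leq|x-y|(e^x+e^y)$ together with H\"older's inequality; both rest on the same ingredients.
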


\subsection{General strategy of the proofs}

One of the main challenges in our paper is the proof of Theorem \ref{theouv}, which requires obtaining an expression for $v^{(j)}$ in terms of the coordinates of $\nabla^ju$. We first establish this expression for smooth functions and then extend it to weak derivatives using the divergence theorem. The key difference between our approach and the Euclidean space lies in the presence of the Christoffel symbols in the recursive formula for the covariant derivative, which significantly complicates the proof.

For Theorem \ref{theo10}, the items (i) and (ii) follow straightforwardly from Theorem \ref{theouv}. However, the proof of item (iii) is not trivial, and we introduce a novel approach. Specifically, we require an expression of $\nabla^ju$ in terms of $v^{(j)}$. While we only obtain a recursive formula, this suffices for our results. Additionally, we restrict our domain to a conical annular sector to show that $d(x)\notin W^{k,p}((0,R),\sinh^{N-1}(t))$ whenever $N\leq(k-1)p$.

With Theorems \ref{theouv} and \ref{theo10} in place, we proceed to prove several radial lemmata and a decay lemma. These inequalities allow us to establish Theorems \ref{theo11}, \ref{theo12}, \ref{theo16}, and \ref{theo15}. Furthermore, to show that the supremum is infinite in item (c) of Theorem \ref{theo16} and item (c) of \ref{theo15}, we adapt the classical sequence introduced by Adams \cite{zbMATH04099653}.

\subsection{Organization of the paper}
The rest of the paper is divided as follows. In Section \ref{sec2}, we introduce essential definitions and properties of the hyperbolic space, covariant derivatives, and Sobolev spaces on Riemannian Manifolds. Section \ref{sec3} contains the demonstration that, for radial functions, the covariant derivative is greater than or equal to the derivative of the radial representative in the proof of Theorem \ref{theouv}. Section \ref{sec4} offers a comprehensive study of the relationships between the spaces $W^{k,p}_{\mathbb H,\mathrm{rad}}(B_R^{\mathbb H})$ and $W^{k,p}((0,R),\sinh^{N-1}(t))$ as seen in the proof of Theorem \ref{theo10}. In Section \ref{sec5}, we develop radial lemmata and their implications, including the proof of Theorem \ref{theo11} and Corollary \ref{corcompact}. Section \ref{sec6} is dedicated to proving a decay lemma (Lemma \ref{decaylemma}) and, using it, we achieve the Sobolev embedding on the entire $\mathbb H^N$, as detailed in the proof of Theorem \ref{theo12}. In Section \ref{sec7}, we study the Adams-type inequality in the proofs of Theorems \ref{theo16} and \ref{theo15}. Finally, Section \ref{sec8} provides some concluding remarks on the challenges associated with the Adams-type inequality and discusses potential applications of the results presented in this paper.

\section{Preliminaries}\label{sec2}

In this section, we introduce some notations and established results related to hyperbolic spaces, specifically within the Poincar\'e ball model. The Poincaré ball model for $\mathbb{H}^N$ is defined as $\{x\in \mathbb{R}^N\colon|x|< 1\}$, equipped with the Riemannian metric
$g_{ij}=(\frac{2}{1-|x|^2})^2\delta_{ij}$. The inverse of this Riemannian metric and the corresponding Christoffel symbols are given by $g^{ij}=(\frac{1-|x|^2}{2})^2 \delta_{ij}$ and
\begin{equation}\label{symChris}
\Gamma^k_{ij}(x)=\dfrac{2}{1-|x|^2}\left(x_i\delta_{jk}+x_j\delta_{ik}-x_k\delta_{ij}\right),\quad\forall i,j,k=1,\ldots,N.    
\end{equation}
The distance between the origin and $x\in\mathbb H^N$ is determined by
\begin{equation*}
d(x)=\log\dfrac{1+|x|}{1-|x|}.
\end{equation*}
The volume element is given by
\begin{equation*}
\mathrm dV_g=\left(\dfrac{2}{1-|x|^2}\right)^N\mathrm dx.
\end{equation*}

Next, we introduce definitions from \cite{zbMATH01447265} to define Sobolev spaces on Riemannian manifolds. The space $T_k(T_xM)$ consists of all $k$ linear $k$-covariant tensors
\begin{equation*}
\eta\colon \underbrace{T_xM\times\cdots\times T_xM}_\text{$k$-times}\rightarrow\mathbb R.
\end{equation*}
For a fixed chart of $M$ at $x$, the set
\begin{equation*}
    \left\{dx_{i_1}\otimes\cdots\otimes dx_{i_k}\right\}_{i_1\ldots i_k}
\end{equation*}
is a base for $T_k(T_xM)$, where $\otimes$ is the tensor product. The components of a $k$-covariant tensor $\eta$ are denoted by $\eta_{i_1\ldots i_j}$. Thus,
\begin{equation*}
\eta=\sum_{i_1\ldots i_j=1}^N\eta_{i_1\ldots i_j}dx_{i_1}\otimes\cdots\otimes dx_{i_j}.
\end{equation*}
A map $\eta\colon M\to \amalg_{x\in M}T_k(T_xM)$ is called a $k$-covariant tensor field if $\eta(x)\in T_k(T_xM)$ for all $x\in M$. We say that $\eta$ is of class $C^{j}$ if it is of class $C^j$ from the manifold $M$ to the manifold $\amalg_{x\in M}T_k(T_xM)$ (equivalently, $\eta_{i_1\ldots i_k}\colon M\to\mathbb R$ is of class $C^j$ for all $i_1,\ldots,i_k$). The metric $g$ induces a metric in the space of $k$-covariant tensor field as follows:
\begin{equation*}
\langle \eta,\nu\rangle_g:=\sum_{i_1\ldots i_kj_1\ldots j_k}g^{i_1j_1}\cdots g^{i_kj_k}\eta_{i_1\ldots i_k}\nu_{j_1\ldots j_k},
\end{equation*}
where $\eta$ and $\nu$ are $k$-covariant tensor fields.

Given a $k$-covariant tensor field $\eta$ of class $C^{j+1}$, the covariant derivative $\nabla\eta$ is defined as a $(k+1)$-covariant tensor field of class $C^j$ with components given by
\begin{equation*}
\left(\nabla \eta\right)(x)_{i_1\ldots i_k}=\dfrac{\partial \eta_{i_2\ldots i_k}}{\partial x_{i_1}}(x)-\sum_{\ell=2}^k\sum_{\alpha=1}^N\Gamma^\alpha_{i_1i_\ell}\eta(x)_{i_2\ldots i_{\ell-1}\alpha i_{\ell+1}\ldots i_k}.
\end{equation*}
In the context of the Poincaré disk model, using \eqref{symChris}, we have
\begin{equation}\label{eqsf}
\left(\nabla\eta\right)(x)_{i_1\ldots i_k}=\dfrac{\partial \eta_{i_2\ldots i_k}}{\partial x_{i_1}}(x)-\dfrac{2x_{i_1}}{1-|x|^2}(k-1)\eta_{i_2\ldots i_k}(x)-\dfrac{2}{1-|x|^2}\sum_{\ell=2}^kx_{i_\ell}\eta_{i_2\ldots i_{\ell-1}i_1i_{\ell+1}\ldots i_k}.
\end{equation}

For a smooth function $u\colon M\to\mathbb R$, we denote by $\nabla^ku$ the $k$-covariant derivative of $u$, and by $|\nabla^ku|_g$ the norm of $\nabla^ku$, defined as
\begin{equation*}
|\nabla^ku|_g^2=\sum_{i_1\ldots i_kj_1\ldots j_k=1}^Ng^{i_1j_1}\cdots g^{i_kj_k}\left(\nabla^ku\right)_{i_1\ldots i_k}\left(\nabla^ku\right)_{j_1\ldots j_k}.
\end{equation*}
For the metric of the Poincaré disk model, we have
\begin{equation}\label{eqnormkcov}
|\nabla^ku|_g^2=\left(\dfrac{1-|x|^2}2\right)^{2k}\sum_{i_1\ldots i_k}\left(\nabla^ku\right)^2_{i_1\ldots i_k}.
\end{equation}

Given a positive integer $k$ and real number $p\geq1$, we define
\begin{equation*}
\mathfrak C^{k,p}(M)=\left\{u\in C^\infty(M)\colon\int_M|\nabla^ju|_g^p\mathrm dV_g<\infty,\quad\forall j=0,\ldots,k\right\}
\end{equation*}
and $W^{k,p}(M)$ the Sobolev space on Riemannian manifold as the completion of $\mathfrak C^{k,p}(M)$ under the norm $\|\cdot\|_{W^{k,p}(M)}$, where
\begin{equation*}
    \|u\|_{W^{k,p}(M)}=\left(\sum_{j=0}^k\int_M|\nabla^ju|_g^p\mathrm dV_g\right)^{\frac1p},\quad\mbox{for }u\in \mathfrak C^{k,p}(M).
\end{equation*}

The space $W^{k,p}(M)$ can be interpreted as the space of all functions $u\in L^p(M)$ such that there exists a Cauchy sequence $(u_n)$ in $\mathfrak C^{k,p}(M)$ with $u_n\to u$ in $L^p(M)$. Therefore, we define $\nabla^ju$ as the $L^p$-limit of the sequence $(\nabla^ju_n)$, and the norm $\|\cdot\|_{W^{k,p}(M)}$ is defined as given above.

\section{Proof of \texorpdfstring{$|\nabla^ku(x)|_g\geq|v^{(k)}(d(x))|$}{}}\label{sec3}

This section assumes $R\in(0,\infty]$. The main objective of this section is to prove Theorem \ref{theouv}. Here, we outline the proof's strategy. Remember that $u(x)=v(d(x))$.

First, we suppose $u$ is a smooth function to derive an expression for all classical derivatives of $v$ in terms of the classical derivatives of $u$. This result will be established in Lemma \ref{lemmaagh}. The proof involves induction and some manipulations of the recursive formula for the $k$-th covariant derivative.

Next, we assume $u$ has weak derivatives and obtain an expression for the weak derivatives of $v$, as stated in Lemma \ref{lemmaagh1}. This step utilizes the result from the previous step applied to test functions, alongside the divergence theorem and a technical remark (Remark \ref{remarktech}) derived in the last step.

Finally, using the expression for the weak derivatives of $v$ obtained in Lemma \ref{lemmaagh1} and applying some classical inequalities, we conclude the proof of Theorem \ref{theouv}, demonstrating that $|\nabla^ku(x)|_g\geq|v^{(k)}(d(x))|$.

\begin{lemma}\label{lemmaagh}
Let $u\in C^\infty(B_R^{\mathbb H})$ be a radial function. Then $v\in C^\infty(0,R)$ with, for each $j\in\mathbb N$,
\begin{equation}\label{equvcov}
v^{(j)}(d(x))=\left(\dfrac{1-|x|^2}2\right)^j\sum_{i_1\ldots i_j=1}^N\left(\nabla^ju\right)_{i_1\cdots i_j}(x)\dfrac{x_{i_1}\cdots x_{i_j}}{|x|^j},\quad\forall x\in B_R^{\mathbb H}\backslash\{0\}.
\end{equation}
\end{lemma}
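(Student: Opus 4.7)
\emph{Plan.} The lemma has two assertions: that $v\in C^\infty(0,R)$, and the identity \eqref{equvcov}. Smoothness is immediate --- since $u$ is radial and smooth, fixing any unit vector $e_0\in\mathbb R^N$ gives $v(t)=u(\tanh(t/2)\,e_0)$, and the map $t\mapsto\tanh(t/2)\,e_0$ is smooth with image in $B_R^{\mathbb H}\setminus\{0\}$ for $t\in(0,R)$.

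For the identity, my main idea is to exploit that the radial direction is parallel along radial geodesics. Fix $x\ne 0$ and consider the unit-speed radial geodesic $\gamma(t)=\tanh(t/2)\,\tfrac{x}{|x|}$, which reaches $x$ at $t=d(x)$ (indeed $d(\gamma(t))=t$, as a direct computation of $\log\tfrac{1+\tanh(t/2)}{1-\tanh(t/2)}$ shows). The Euclidean components of its tangent vector are $\dot\gamma^i(t)=\tfrac{1-|\gamma(t)|^2}{2}\tfrac{x_i}{|x|}$, so in particular $\dot\gamma^i(d(x))=\tfrac{1-|x|^2}{2}\tfrac{x_i}{|x|}$. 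Because $\gamma$ is a geodesic, $\nabla_{\dot\gamma}\dot\gamma=0$; a routine induction based on the Leibniz rule
\begin{equation*}
(\nabla^{j+1}u)(X,\dotsc,X) \;=\; X\bigl((\nabla^ju)(X,\dotsc,X)\bigr)\;-\;\sum_{\ell=1}^{j}(\nabla^ju)(X,\dotsc,\nabla_X X,\dotsc,X)
\end{equation*}
then shows that, along $\gamma$, $(\nabla^ju)(\dot\gamma,\dotsc,\dot\gamma)\big|_{\gamma(t)} = \tfrac{d^j}{dt^j}u(\gamma(t)) = v^{(j)}(t)$, using $u(\gamma(t))=v(t)$. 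Evaluating at $t=d(x)$ and expanding the left-hand side in the Euclidean coordinate basis yields exactly \eqref{equvcov}.

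An alternative induction-in-coordinates proof is also possible, matching the strategy described just before the lemma. The base case $j=1$ is the chain rule. For the inductive step, one differentiates the identity at level $j$ in $x_m$, contracts with $x_m/|x|$ and sums, and solves for $v^{(j+1)}(d(x))$; this reduces the claim to the algebraic identity
\begin{equation*}
S_{j+1}(x) \;=\; \frac{1}{|x|}\sum_m x_m\,\partial_m S_j(x)\;-\;\frac{2j|x|}{1-|x|^2}\,S_j(x),\qquad S_j(x):=\sum_{i_1,\dotsc,i_j}(\nabla^ju)_{i_1\cdots i_j}\frac{x_{i_1}\cdots x_{i_j}}{|x|^j},
\end{equation*}
which one checks by expanding $(\nabla^{j+1}u)$ via the recursive definition of the covariant derivative and collecting the Christoffel-symbol contributions. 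The main obstacle in this second approach is precisely the index bookkeeping for these Christoffel terms: several symmetrically placed permutations must be relabelled against the fully symmetric monomial $\prod_a x_{i_a}$, and the correct coefficient $-2j|x|/(1-|x|^2)$ emerges only after all contributions are combined --- which is exactly the bookkeeping that the geometric proof bypasses.
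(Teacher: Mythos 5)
Your proof is correct, but your primary argument takes a genuinely different (and slicker) route than the paper. The paper proves \eqref{equvcov} by induction in coordinates: it differentiates the level-$j$ identity, contracts with $x_{i_1}$ and sums, and then must show that the Christoffel contribution coming from the explicit symbols \eqref{symChris} exactly reproduces the correction term $j|x|\sum(\nabla^j u)_{i_2\ldots i_{j+1}}x_{i_2}\cdots x_{i_{j+1}}/|x|^j$, which is the index-relabelling bookkeeping you correctly identify as the crux (your sketched ``alternative'' proof is essentially the paper's argument). Your main proof instead observes that $\gamma(t)=\tanh(t/2)\,x/|x|$ is the unit-speed radial geodesic with $d(\gamma(t))=t$ and $\dot\gamma^i=\tfrac{1-|\gamma|^2}{2}\tfrac{x_i}{|x|}$, so that by the Leibniz rule for $\nabla_{\dot\gamma}$ and $\nabla_{\dot\gamma}\dot\gamma=0$ one gets $(\nabla^j u)(\dot\gamma,\dotsc,\dot\gamma)=\tfrac{d^j}{dt^j}u(\gamma(t))=v^{(j)}(t)$ by induction, and expanding the contraction in the coordinate basis gives \eqref{equvcov} at once; this is sound (the index-ordering convention matches \eqref{eqsf}, since all slots carry the same vector), it bypasses the Christoffel computation entirely, and it would work verbatim in any rotationally symmetric model. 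What the paper's heavier computation buys in exchange is the intermediate coordinate identity \eqref{eqvgh51} recorded in Remark \ref{remarktech}, which is reused in the integration-by-parts argument of Lemma \ref{lemmaagh1}; if your geometric proof were adopted, that identity would still have to be derived separately for the weak-derivative extension, so your route shortens this lemma but does not eliminate the coordinate work needed downstream. One small point of rigor: to invoke the Leibniz formula with an ambient covariant derivative you should either work with the covariant derivative along the curve ($D_t\dot\gamma=0$) or extend $\dot\gamma$ to the unit radial field $X=\operatorname{grad}_g d$, for which $\nabla_X X=\tfrac12\nabla|{\nabla d}|_g^2=0$ on $B_R^{\mathbb H}\setminus\{0\}$; either fix is immediate.
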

\begin{proof}
Let us prove \eqref{equvcov} by induction on $j$. By $u(x)=v(d(x))$ we have
\begin{equation*}
\dfrac{\partial u}{\partial x_i}=v'(d(x))\dfrac{2}{1-|x|^2}\dfrac{x_i}{|x|}.
\end{equation*}
By multiplying each term by $x_i$ and summing for $i$ from $1$ to $N$, we can conclude that
\begin{equation*}
v'(d(x))=\dfrac{1-|x|^2}{2}\sum_{i=1}^N\left(\nabla u\right)_i\dfrac{x_i}{|x|}.
\end{equation*}
This concludes \eqref{equvcov} for $j=1$.

By induction hypothesis, we have
\begin{equation*}
v^{(j)}(d(x))=\left(\dfrac{1-|x|^2}2\right)^j\sum_{i_2\ldots i_{j+1}=1}^N\left(\nabla^ju\right)_{i_2\ldots i_{j+1}}\dfrac{x_{i_2}\cdots x_{i_{j+1}}}{|x|^j}.
\end{equation*}
Applying $\frac{\partial}{\partial x_{i_1}}$ on both sides,
\begin{align}
v^{(j+1)}(d(x))&\dfrac{2}{1-|x|^2}\dfrac{x_{i_1}}{|x|}=j\left(\dfrac{1-|x|^2}2\right)^{j-1}(-
x_{i_1})\sum_{i_2\ldots i_{j+1}=1}^N\left(\nabla^ju\right)_{i_2\ldots i_{j+1}}\dfrac{x_{i_2}\cdots x_{i_{j+1}}}{|x|^j}\nonumber\\
&\quad+\left(\dfrac{1-|x|^2}{2}\right)^j\sum_{i_2\ldots i_{j+1}=1}^N\dfrac{\partial \left(\nabla^ju\right)_{i_2\ldots i_{j+1}}}{\partial x_{i_1}}\dfrac{x_{i_2}\cdots x_{i_{j+1}}}{|x|^j}\nonumber\\
&\quad+\left(\dfrac{1-|x|^2}{2}\right)^j\sum_{i_2\ldots i_{j+1}=1}^N\left(\nabla^ju\right)_{i_2\ldots i_{j+1}}\left(\prod_{\ell=2}^{j+1}\frac{x_{i_\ell}}{|x|}\right)\sum_{\ell=2}^{j+1}\left(\frac{\delta_{i_\ell i_1}}{x_{i_\ell}}-\frac{x_{i_1}}{|x|^2}\right)\label{eqvgh1}.
\end{align}
Let us denote the three terms in the right hand of \eqref{eqvgh1} by, respectively, $I_1$, $I_2$, and $I_3$. Note that
\begin{equation}\label{eqvgh2}
\sum_{i_1=1}^NI_1x_{i_1}=-j|x|^2\left(\frac{1-|x|^2}2\right)^{j-1}\sum_{i_2\ldots i_{j+1}=1}^N\left(\nabla^ju\right)_{i_2\ldots i_{j+1}}\dfrac{x_{i_2}\cdots x_{i_{j+1}}}{|x|^j}
\end{equation}
and
\begin{equation}\label{eqvgh3}
\sum_{i_1=1}^NI_2x_{i_1}=|x|\left(\dfrac{1-|x|^2}{2}\right)^j\sum_{i_1\ldots i_{j+1}=1}^N\dfrac{\partial \left(\nabla^ju\right)_{i_2\ldots i_{j+1}}}{\partial x_{i_1}}\dfrac{x_{i_1}\cdots x_{i_{j+1}}}{|x|^{j+1}}.
\end{equation}
On the other hand, by
\begin{equation*}
\sum_{i_1=1}^Nx_{i_1}\sum_{\ell=2}^{j+1}\left(\frac{\delta_{i_\ell i_1}}{x_{i_\ell}}-\frac{x_{i_1}}{|x|^2}\right)=0,
\end{equation*}
we have $\sum_{i_1=1}^NI_3x_{i_1}=0$. Using this together with \eqref{eqvgh2} and \eqref{eqvgh3} on \eqref{eqvgh1}, we obtain
\begin{align}
v^{(j+1)}(d(x))&=\left(\dfrac{1-|x|^2}2\right)^{j+1}\sum_{i_1\ldots i_{j+1}=1}^N\dfrac{\partial \left(\nabla^ju\right)_{i_2\ldots i_{j+1}}}{\partial x_{i_1}}\dfrac{x_{i_1}\cdots x_{i_{j+1}}}{|x|^{j+1}}\nonumber\\
&\quad-j|x|\left(\frac{1-|x|^2}2\right)^{j}\sum_{i_2\ldots i_{j+1}=1}^N\left(\nabla^ju\right)_{i_2\ldots i_{j+1}}\dfrac{x_{i_2}\cdots x_{i_{j+1}}}{|x|^j}.\label{safojabsdfkja1}
\end{align}
From the definition of $j+1$ covariant derivative (see \eqref{eqsf}),
\begin{align}
v^{(j+1)}(d(x))&=\left(\dfrac{1-|x|^2}2\right)^{j+1}\sum_{i_1\ldots i_{j+1}=1}^N\left(\nabla^{j+1}u\right)_{i_1\ldots i_{j+1}}\dfrac{x_{i_1}\cdots x_{i_{j+1}}}{|x|^{j+1}}\nonumber\\
&\quad+\left(\dfrac{1-|x|^2}2\right)^{j+1}\sum_{\alpha i_1\ldots i_{j+1}=1}^N\sum_{\ell=2}^{j+1}\left(\nabla^ju\right)_{i_2\ldots i_{\ell-1}\alpha i_{\ell+1}\ldots i_{j+1}}\Gamma_{i_1i_\ell}^\alpha\dfrac{x_{i_1}\cdots x_{i_{j+1}}}{|x|^{j+1}}\nonumber\\
&\quad-j|x|\left(\frac{1-|x|^2}2\right)^{j}\sum_{i_2\ldots i_{j+1}=1}^N\left(\nabla^ju\right)_{i_2\ldots i_{j+1}}\dfrac{x_{i_2}\cdots x_{i_{j+1}}}{|x|^j}.\label{safojabsdfkja2}
\end{align}
The proof of \eqref{eqineuv} is completed by showing that
\begin{align}
I_4&:=\dfrac{1-|x|^2}2\sum_{\alpha i_1\ldots i_{j+1}=1}^N\sum_{\ell=2}^{j+1}\left(\nabla^ju\right)_{i_2\ldots i_{\ell-1}\alpha i_{\ell+1}\ldots i_{j+1}}\Gamma_{i_1i_\ell}^\alpha\dfrac{x_{i_1}\cdots x_{i_{j+1}}}{|x|^{j+1}}\nonumber\\
&=j|x|\sum_{i_2\ldots i_{j+1}=1}^N\left(\nabla^ju\right)_{i_2\ldots i_{j+1}}\dfrac{x_{i_2}\cdots x_{i_{j+1}}}{|x|^j}.\label{eqvgh5}
\end{align}
Note that, from \eqref{symChris},
\begin{align}
I_4&=\sum_{i_1\ldots i_{j+1}=1}^N\sum_{\ell=2}^{j+1}x_{i_1}\left(\nabla^ju\right)_{i_2\cdots i_{j+1}}\dfrac{x_{i_1}\cdots x_{i_{j+1}}}{|x|^{j+1}}\nonumber\\
&\quad+\sum_{i_1\ldots i_{j+1}=1}^N\sum_{\ell=2}^{j+1}x_{i_\ell}\left(\nabla^ju\right)_{i_2\ldots i_{\ell-1}i_1 i_{\ell+1}\ldots i_{j+1}}\dfrac{x_{i_1}\cdots x_{i_{j+1}}}{|x|^{j+1}}\nonumber\\
&\quad-\sum_{\alpha i_2\ldots i_{j+1}=1}^N\sum_{\ell=2}^{j+1}x_\alpha x_{i_\ell}\left(\nabla^ju\right)_{i_2\ldots i_{\ell-1}\alpha i_{\ell+1}\ldots i_{j+1}}\dfrac{x_{i_2}\cdots x_{i_{j+1}}}{|x|^{j+1}}.\label{eqvgh6}
\end{align}
For a fixed $\ell=2,\ldots,j+1$ we can switch $i_1$ by $i_\ell$ to obtain
\begin{align}
\sum_{i_1\ldots i_{j+1}=1}^Nx_{i_\ell}\left(\nabla^ju\right)_{i_2\ldots i_{\ell-1}i_1 i_{\ell+1}\ldots i_{j+1}}\dfrac{x_{i_1}\ldots x_{i_{j+1}}}{|x|^{j+1}}&=\sum_{i_1\ldots i_{j+1}=1}^Nx_{i_1}\left(\nabla^ju\right)_{i_2\ldots i_{j+1}}\dfrac{x_{i_1}\cdots x_{i_{j+1}}}{|x|^{j+1}}\nonumber\\
&=|x|\sum_{i_2\ldots i_{j+1}=1}^N\left(\nabla^ju\right)_{i_2\ldots i_{j+1}}\dfrac{x_{i_2}\cdots x_{i_{j+1}}}{|x|^j}.\label{eqvgh7}
\end{align}
Analogously, fixed $\ell=2,\ldots,j+1$ and switching $i_\ell$ by $\alpha$ we have
\begin{align}
\sum_{\alpha i_2\ldots i_{j+1}=1}^Nx_\alpha x_{i_\ell}\left(\nabla^ju\right)_{i_2\ldots i_{\ell-1}\alpha i_{\ell+1}\ldots i_{j+1}}\dfrac{x_{i_2}\cdots x_{i_{j+1}}}{|x|^{j+1}}&=\sum_{\alpha i_2\ldots i_{j+1}=1}^Nx_\alpha^2 \left(\nabla^ju\right)_{i_2\ldots i_{j+1}}\dfrac{x_{i_2}\cdots x_{i_{j+1}}}{|x|^{j+1}}\nonumber\\
&=|x|\sum_{i_2\ldots i_{j+1}=1}^N\left(\nabla^ju\right)_{i_2\ldots i_{j+1}}\dfrac{x_{i_2}\cdots x_{i_{j+1}}}{|x|^j}.\label{eqvgh8}
\end{align}
Therefore, \eqref{eqvgh7} and \eqref{eqvgh8} in \eqref{eqvgh6} imply \eqref{eqvgh5} which concludes \eqref{equvcov}.
\end{proof}

\begin{remark}\label{remarktech}
By the previous proof, using \eqref{safojabsdfkja1}, \eqref{safojabsdfkja2}, and \eqref{eqvgh5} we have that 
\begin{align}
\sum_{i_1\ldots i_{j+1}=1}^N\!\left(\nabla^{j+1}u\right)_{i_{1}\ldots i_{j+1}}\!x_{i_1}\cdots x_{i_{j+1}}\!&=\!\sum_{i_1\ldots i_{j+1}=1}^N\dfrac{\partial\left(\nabla^j u\right)_{i_2\ldots i_{j+1}}}{\partial x_{i_{j+1}}}x_{i_1}\cdots x_{i_{j+1}}\nonumber\\
&\quad -j|x|^2\frac{2}{1-|x|^2}\!\sum_{i_2\ldots i_{j+1}=1}^N\!\left(\nabla^j u\right)_{i_1\ldots i_j}\!x_{i_2}\cdots x_{i_{j+1}}.\label{eqvgh51}
\end{align}
\end{remark}

\begin{lemma}\label{lemmaagh1}
Let $u\colon B_R^{\mathbb H}\to\mathbb R$ be a radial function with weak derivatives up to order $j$. Then $v$ has weak derivatives up to order $j$ and it is given by
\begin{equation}\label{eq310}
v_j(t):=\left(\frac{1-|x|^2}2\right)^j\sum_{i_1\ldots i_j=1}^N(\nabla^ju)_{i_1\cdots i_k}(x)\dfrac{x_{i_1}\cdots x_{i_j}}{|x|^j}
\end{equation}
where $t=d(x)$.
\end{lemma}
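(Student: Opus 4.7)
The plan is to reduce to the smooth case established in Lemma \ref{lemmaagh} through an approximation argument. Since $u$ is radial with weak derivatives up to order $j$, a radial mollification (convolving against a radial smooth kernel in the Euclidean sense, which preserves radiality in the hyperbolic sense since $d(x)$ depends only on $|x|$) yields a sequence $(u_n)\subset C^\infty(B_R^{\mathbb H})$ of smooth radial functions such that $u_n\to u$ and $\nabla^\ell u_n\to\nabla^\ell u$ in $L^p_{\mathrm{loc}}$ for every $\ell=0,\ldots,j$. Writing the radial profile as $u_n(x)=v_n(d(x))$, Lemma \ref{lemmaagh} provides the classical identity
\begin{equation*}
v_n^{(\ell)}(d(x)) = \left(\tfrac{1-|x|^2}{2}\right)^\ell\sum_{i_1\ldots i_\ell=1}^N(\nabla^\ell u_n)_{i_1\ldots i_\ell}(x)\frac{x_{i_1}\cdots x_{i_\ell}}{|x|^\ell}, \quad \ell=0,\ldots,j.
\end{equation*}

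The key technical step is to estimate $v_n^{(\ell)}$ in terms of the covariant norm. Applying the Cauchy--Schwarz inequality to the index sum and using the identity $\sum_{i_1\ldots i_\ell}\frac{x_{i_1}^2\cdots x_{i_\ell}^2}{|x|^{2\ell}}=1$, combined with the expression \eqref{eqnormkcov} for $|\nabla^\ell u|_g$, one obtains the pointwise bound
\begin{equation*}
|v_n^{(\ell)}(d(x)) - v_\ell(d(x))| \leq |\nabla^\ell(u_n - u)|_g(x),
\end{equation*}
where $v_\ell$ denotes the candidate defined in \eqref{eq310}. Integrating the $p$-th power over $B_R^{\mathbb H}$ and translating to a one-dimensional integral via the polar-coordinate identity \eqref{eqpolar} yields
\begin{equation*}
\omega_{N-1}\int_0^R|v_n^{(\ell)}(t) - v_\ell(t)|^p\sinh^{N-1}(t)\,\mathrm dt \leq \int_{B_R^{\mathbb H}}|\nabla^\ell(u_n-u)|_g^p\,\mathrm dV_g \to 0,
\end{equation*}
so $v_n^{(\ell)}\to v_\ell$ in $L^p((0,R),\sinh^{N-1}(t)\mathrm dt)$, and hence in $L^1_{\mathrm{loc}}(0,R)$ since $\sinh^{N-1}(t)$ is bounded away from zero on every compact subset of $(0,R)$.

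To conclude, fix an arbitrary test function $\varphi\in C_c^\infty(0,R)$. For each smooth $v_n$, classical integration by parts gives $\int_0^R v_n(t)\varphi^{(j)}(t)\,\mathrm dt = (-1)^j\int_0^R v_n^{(j)}(t)\varphi(t)\,\mathrm dt$. Because $\varphi$ is compactly supported in $(0,R)$, passing to the limit with the $L^1_{\mathrm{loc}}$-convergence of both $v_n\to v$ and $v_n^{(j)}\to v_j$ yields
\begin{equation*}
\int_0^R v(t)\varphi^{(j)}(t)\,\mathrm dt = (-1)^j\int_0^R v_j(t)\varphi(t)\,\mathrm dt,
\end{equation*}
which identifies $v_j$ as the $j$-th weak derivative of $v$. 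The main obstacle is the construction of smooth radial approximations producing the required local convergence of covariant derivatives of all orders up to $j$; in the hyperbolic setting this needs one to check that Euclidean radial mollification is compatible with the covariant weak-derivative structure induced by the Christoffel symbols \eqref{symChris}. An alternative route, avoiding approximation entirely, would exploit the recursion \eqref{eqvgh51} from Remark \ref{remarktech} together with integration by parts against tensor-valued test functions built from $\varphi(d(x))$ and the coordinates $x_{i_1}\cdots x_{i_j}$, but that approach requires substantially heavier calculation.
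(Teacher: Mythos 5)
Your proposal is correct in substance, but it takes a genuinely different route from the paper: the paper never approximates $u$ at all. It fixes $\varphi\in C^\infty_0(0,R)$, sets $\psi(x)=\varphi(d(x))$, applies Lemma \ref{lemmaagh} to the smooth test function $\psi$, and then transfers the $j$ derivatives from $\psi$ to $u$ one at a time through the integration-by-parts identity \eqref{intpart}, proved by the divergence theorem \eqref{eqdivtheo2} applied to vector fields built from $\nabla^{j-\ell-1}\psi$, with the identity \eqref{eqvgh51} of Remark \ref{remarktech} absorbing the Christoffel terms at each step --- essentially the ``heavier'' alternative you sketch in your last sentence. Your mollification route is more elementary, and the compatibility you flag does hold: by \eqref{eqsf}, each component $(\nabla^\ell u)_{i_1\ldots i_\ell}$ is a fixed linear combination of partial derivatives $D^\alpha u$, $1\le|\alpha|\le\ell$, with coefficients smooth on $B_R^{\mathbb H}$, so $D^\alpha(u\ast\rho_n)=(D^\alpha u)\ast\rho_n$ converges locally and the covariant components follow; moreover radiality of the candidate $v_j$ comes for free as an a.e.\ limit of radial functions. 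Two small repairs are needed to make your write-up precise: (i) mollification (and the hypothesis, which only gives local integrability) yields convergence only on compact subsets, so your displayed estimate over all of $B_R^{\mathbb H}$ should be replaced by the same Cauchy--Schwarz estimate over the compact annulus $\{a\le d(x)\le b\}$ with $\operatorname{supp}\varphi\subset[a,b]\subset(0,R)$, where $\sinh^{N-1}$ is bounded below; this local $L^1$ convergence of $v_n$ and $v_n^{(j)}$ is all the limiting step actually uses; (ii) $u\ast\rho_n$ is smooth and radial only on a slightly smaller ball, so Lemma \ref{lemmaagh} should be invoked on $B_{R'}^{\mathbb H}$ with $[a,b]\subset(0,R')$, which costs nothing since $\varphi$ is compactly supported. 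With these localizations your argument is complete; it is shorter than the paper's, while the paper's duality argument works directly at the level of weak derivatives with no approximation and is the version that extends verbatim to the way $W^{k,p}(B_R^{\mathbb H})$ is defined by completion.
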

\begin{proof}
Fix $\varphi\in C^\infty_0(0,R)$. Then, using Lemma \ref{lemmaagh} on $\varphi$ and $\sinh(d(x))=\frac{2|x|}{(1-|x|^2)}$,
\begin{align*}
\int_0^R v(t)\varphi^{(j)}(t)dt&=\frac{1}{\omega_{N-1}}\int_{B_R^{\mathbb H}}u(x)\varphi^{(j)}(d(x))\frac{1}{\sinh^{N-1}(d(x))}\mathrm dV_g\\
&=\frac{1}{\omega_{N-1}}\int_{B_R^{\mathbb H}}\left(\dfrac{1-|x|^2}2\right)^{N-1}\frac{1}{|x|^{N-1}}u(x)\varphi^{(j)}(d(x))\mathrm dV_g.
\end{align*}
Denote $\psi(x)=\varphi(d(x))$. Using \eqref{equvcov} for $\varphi$, we have
\begin{equation*}
\varphi^{(j)}(d(x))=\left(\dfrac{1-|x|^2}2\right)^j\sum_{i_1\ldots i_j=1}^N\left(\nabla^j\psi\right)_{i_1\ldots i_j}\dfrac{x_{i_1}\cdots x_{i_j}}{|x|^j}.
\end{equation*}
If
\begin{align}
\int_{B_R^{\mathbb H}}&\dfrac{u(x)}{|x|^{N+j-1}}\left(\dfrac{1-|x|^2}2\right)^{N+j-1}\sum_{i_1\ldots i_j=1}^N\left(\nabla^j\psi\right)_{i_1\ldots i_j}x_{i_1}\cdots x_{i_j}\mathrm dV_g\nonumber\\
&=(-1)^j\int_{B_R^{\mathbb H}}\dfrac{\psi(x)}{|x|^{N+j-1}}\left(\dfrac{1-|x|^2}2\right)^{N+j-1}\sum_{i_1\ldots i_j=1}^N\left(\nabla^ju\right)_{i_1\ldots i_j}x_{i_1}\cdots x_{i_j}\mathrm dV_g\label{eqintpart}
\end{align}
then
\begin{align*}
\int_0^Rv(t)\varphi^{(j)}(t)\mathrm dt&=\frac{1}{\omega_{N-1}}\int_{B_R^{\mathbb H}}\dfrac{u(x)}{|x|^{N+j-1}}\left(\dfrac{1-|x|^2}2\right)^{N+j-1}\sum_{i_1\ldots i_j=1}^N\left(\nabla^j\psi\right)_{i_1\ldots i_j}x_{i_1}\cdots x_{i_j}\mathrm dV_g\\
&=\frac{(-1)^j}{\omega_{N-1}}\int_{B_R^{\mathbb H}}\dfrac{\psi(x)}{|x|^{N+j-1}}\left(\dfrac{1-|x|^2}2\right)^{N+j-1}\sum_{i_1\ldots i_j=1}^N\left(\nabla^ju\right)_{i_1\ldots i_j}x_{i_1}\cdots x_{i_j}\mathrm dV_g\\
&=(-1)^j\int_0^Rv^{(j)}(t)\varphi(t)\mathrm dt.
\end{align*}

To obtain \eqref{eq310}, we are left with the task of proving \eqref{eqintpart} which is a direct consequence of
\begin{align*}
&(-1)^\ell\sum_{i_1\ldots i_j=1}^N\int_{B_R^{\mathbb H}}\dfrac{x_{i_1}\cdots x_{i_j}}{|x|^{N+j-1}}\left(\dfrac{1-|x|^2}2\right)^{N+j-1}\left(\nabla^{j-\ell}\psi\right)_{i_{\ell+1}\ldots i_{j}}(\nabla^\ell u)_{i_\ell\ldots i_1}\mathrm dV_g\\
&\quad=\sum_{i_1\ldots i_j=1}^N\int_{B_R^{\mathbb H}}\dfrac{x_{i_1}\cdots x_{i_j}}{|x|^{N+j-1}}\left(\dfrac{1-|x|^2}2\right)^{N+j-1}\left(\nabla^{j}\psi\right)_{i_1\ldots i_{j}}u(x)\mathrm dV_g,\quad\forall \ell=0,\ldots,j.
\end{align*}
Fixed $\ell=0,\ldots,j-1$, it is enough to show that
\begin{align}
I_5&:=\sum_{i_1\ldots i_j=1}^N\int_{B_R^{\mathbb H}}\dfrac{x_{i_1}\cdots x_{i_j}}{|x|^{N+j-1}}\left(\dfrac{1-|x|^2}2\right)^{N+j-1}\left(\nabla^{j-\ell}\psi\right)_{i_{\ell+1}\ldots i_{j}}(\nabla^\ell u)_{i_\ell\ldots i_1}\mathrm dV_g\nonumber\\
&=-\sum_{i_1\ldots i_j=1}^N\int_{B_R^{\mathbb H}}\dfrac{x_{i_1}\cdots x_{i_j}}{|x|^{N+j-1}}\left(\dfrac{1-|x|^2}2\right)^{N+j-1}\left(\nabla^{j-\ell-1}\psi\right)_{i_{\ell+2}\ldots i_{j}}(\nabla^{\ell+1}u)_{i_{\ell+1}\ldots i_1}\mathrm dV_g.\label{intpart}
\end{align}
Note that \eqref{eqvgh51} implies that
\begin{align*}
\sum_{i_{\ell+1}\ldots i_{j}=1}^N\left(\nabla^{j-\ell}\psi\right)_{i_{\ell+1}\ldots i_{j}}&x_{i_{\ell+1}}\cdots x_{i_{j}}=\sum_{i_{\ell+1}\ldots i_{j}=1}^N\dfrac{\partial(\nabla^{j-\ell-1}\psi)_{i_{\ell+2}\ldots i_{j}}}{\partial x_{i_{\ell+1}}}x_{i_{\ell+1}}\cdots x_{i_{j}}\\
&-(j-\ell-1)|x|^2\dfrac{2}{1-|x|^2}\sum_{i_{\ell+2}\ldots i_{j}=1}^N\left(\nabla^{j-\ell-1} \psi\right)_{i_{\ell+2}\ldots i_{j}}x_{i_{\ell+2}}\cdots x_{i_{j}}.
\end{align*}
Then
\begin{align}
&I_5=\sum_{i_1\ldots i_j=1}^N\int_{B_R^{\mathbb H}}\dfrac{x_{i_1}\cdots x_{i_j}}{|x|^{N+j-1}}\left(\dfrac{1-|x|^2}2\right)^{N+j-1}\dfrac{\partial\left(\nabla^{j-\ell-1}\psi\right)_{i_{\ell+2}\ldots i_{j}}}{\partial x_{\ell+1}}(\nabla^\ell u)_{i_\ell\ldots i_1}\mathrm dV_g\label{gkal}\\
&-\!(j-\ell-1)\!\sum_{i_1\ldots i_j=1}^N\int_{B_R^{\mathbb H}}\dfrac{x_{i_1}\cdots x_{i_j}}{|x|^{N+j-1}}\left(\dfrac{1-|x|^2}2\right)^{N+j-2}\!x_{i_{\ell+1}}\left(\nabla^{j-\ell-1}\psi\right)_{i_{\ell+2}\ldots i_{j}}(\nabla^\ell u)_{i_\ell\ldots i_1}\mathrm dV_g.\nonumber
\end{align}
Since $\psi$ has compact support, by the divergence theorem,
\begin{equation}\label{eqdivtheo2}
\int_{B_{\overline R}^{\mathbb R}}w\mathrm{div}(V)\mathrm dV_g=-\int_{B_{\overline R}^{\mathbb R}}\langle\nabla w,V\rangle_g\mathrm dV_g,\quad\forall w\in C^\infty(B_{\overline R}^{\mathbb R}),
\end{equation}
where $B_{\overline R}^{\mathbb R}=B_R^{\mathbb H}$ and $V=(\nabla^{j-\ell-1}\psi)_{i_{\ell+2}\ldots i_j}(\frac{1-|x|^2}2)^{N+j-\ell-1}\partial_{i_{\ell+1}}$ with $i_{\ell+1}$ fixed. Note that
\begin{align*}
\mathrm{div}(V)&=\dfrac{\partial}{\partial x_{i_{\ell+1}}}\left(\left(\nabla^{j-\ell-1}\psi\right)_{i_{\ell+2}\ldots i_j}\left(\frac{1-|x|^2}2\right)^{N+j-\ell-1}\right)\\
&\quad+\left(\nabla^{j-\ell-1}\psi\right)_{i_{\ell+2}\ldots i_j}\left(\frac{1-|x|^2}2\right)^{N+j-\ell-1}\dfrac{\partial}{\partial x_{i_\ell+1}}\left(\log\sqrt{|g|}\right).
\end{align*}
Using that $\sqrt{|g|}=(\frac{2}{1-|x|^2})^N$, we have
\begin{align*}
\mathrm{div}(V)&=\dfrac{\partial\left(\nabla^{j-\ell-1}\psi\right)_{i_{\ell+2}\ldots i_j}}{\partial x_{i_{\ell+1}}}\left(\dfrac{1-|x|^2}2\right)^{N+j-\ell-1}\\
&\quad-(j-\ell-1)\left(\nabla^{j-\ell-1}\psi\right)_{i_{\ell+2}\ldots i_j}\left(\dfrac{1-|x|^2}2\right)^{N+j-\ell-2}x_{i_{\ell+1}}.
\end{align*}
By applying this and \eqref{eqdivtheo2} to \eqref{gkal}, and noting that $u$ has $j$ weak derivatives, which allows us to use \eqref{eqdivtheo2}, we obtain:
\begin{align*}
I_5&=\sum_{i_1\ldots i_j=1}^N\int_{B_R^{\mathbb H}}\dfrac{x_{i_1}\cdots x_{i_j}}{|x|^{N+j-1}}\left(\frac{1-|x|^2}2\right)^\ell\left(\nabla^\ell u\right)_{i_\ell\ldots i_1}\mathrm{div}(V)\mathrm dV_g\\
&=-\sum_{i_1\ldots i_j=1}^N\int_{B_R^{\mathbb H}}\left\langle\nabla\left(\dfrac{x_{i_1}\cdots x_{i_j}}{|x|^{N+j-1}}\left(\frac{1-|x|^2}2\right)^\ell\left(\nabla^\ell u\right)_{i_\ell\ldots i_1}\right),V\right\rangle_g\mathrm dV_g.
\end{align*}
By $\mathrm{div}(\frac{x}{|x|^N})=0$,
\begin{align*}
I_5&=-\sum_{i_1\ldots i_j=1}^N\int_{B_R^{\mathbb H}}\dfrac{x_{i_\ell+1}}{|x|^N}\dfrac{\partial}{\partial x_{i_{\ell+1}}}\left(\dfrac{x_{i_1}\cdots x_{i_\ell}x_{i_{\ell+2}}\cdots x_{i_j}}{|x|^{j-1}}\left(\frac{1-|x|^2}2\right)^\ell\left(\nabla^\ell u\right)_{i_\ell\ldots i_1}\right)\\
&\qquad\cdot\left(\frac{1-|x|^2}2\right)^{N+j-\ell-1}\left(\nabla^{j-\ell-1}\psi\right)_{i_{\ell+2}\ldots i_j}\mathrm dV_g.
\end{align*}
From $\sum_{i_{\ell+1}=1}^Nx_{i_{\ell+1}}\frac{\partial}{\partial x_{i_{\ell+1}}}(\frac{x_{i_1}\cdots x_{i_\ell}x_{i_{\ell+2}}\cdots x_{i_j}}{|x|^{j-1}})=0$, we get
\begin{align*}
I_5&=-\sum_{i_1\ldots i_j=1}^N\int_{B_R^{\mathbb H}}\dfrac{x_{i_1}\cdots x_{i_j}}{|x|^{N+j-1}}\dfrac{\partial}{\partial x_{i_{\ell+1}}}\left(\left(\frac{1-|x|^2}2\right)^\ell(\nabla^\ell u)_{i_{\ell}\ldots i_1}\right)\\
&\qquad\cdot\left(\frac{1-|x|^2}2\right)^{N+j-\ell-1}\left(\nabla^{j-\ell-1}\psi\right)_{i_{\ell+2}\ldots i_j}\mathrm dV_g\\
&=-\sum_{i_1\ldots i_j=1}^N\int_{B_R^{\mathbb H}}\dfrac{x_{i_1}\cdots x_{i_j}}{|x|^{N+j-1}}\left(\frac{1-|x|^2}2\right)^{N+j-1}\dfrac{\partial\left(\nabla^\ell u\right)_{i_\ell\ldots i_1}}{\partial x_{i_{\ell+1}}}\left(\nabla^{j-\ell-1}\psi\right)_{i_{\ell+2}\ldots i_j}\mathrm dV_g\\
    &\quad+\ell\sum_{i_1\ldots i_j=1}^N\int_{B_R^{\mathbb H}}\dfrac{x_{i_1}\cdots x_{i_j}}{|x|^{N+j-1}}\left(\frac{1-|x|^2}2\right)^{N+j-2}x_{i_{\ell+1}}\left(\nabla^\ell u\right)_{i_\ell\ldots i_1}\left(\nabla^{j-\ell-1}\psi\right)_{i_{\ell+2}\ldots i_j}\mathrm dV_g.
\end{align*}
Since
\begin{align*}
\sum_{i_1\ldots i_{\ell+1}=1}^N\left(\nabla^{\ell+1}u\right)_{i_{\ell+1}\ldots i_1}x_{i_1}\cdots x_{i_{\ell+1}}&=\sum_{i_1\ldots i_{\ell+1}=1}^N\dfrac{\partial\left(\nabla^\ell u\right)_{i_\ell\ldots i_1}}{\partial x_{i_{\ell+1}}}x_{i_1}\cdots x_{i_{\ell+1}}\\
&\quad -\ell|x|^2\frac{2}{1-|x|^2}\sum_{i_1\ldots i_\ell=1}^N\left(\nabla^\ell u\right)_{i_\ell\ldots i_1}x_{i_1}\cdots x_{i_{\ell}}
\end{align*}
which follows by \eqref{eqvgh51}, we are able to conclude
\begin{equation*}
I_5=-\sum_{i_1\ldots i_j=1}^N\int_{B_R^{\mathbb H}}\dfrac{x_{i_1}\cdots x_{i_j}}{|x|^{N+j-1}}\left(\frac{1-|x|^2}2\right)^{N+j-1}\left(\nabla^{\ell+1}u\right)_{i_{\ell+1}\ldots i_1}\left(\nabla^{j-\ell-1}\psi\right)_{i_{\ell+2}\ldots i_j}\mathrm dV_g.
\end{equation*}
This finishes the proof of \eqref{intpart} and, therefore, the proof of the lemma.
\end{proof}

\begin{proof}[Proof of Theorem \ref{theouv}] Let $(u_n)$ be a Cauchy sequence of radial functions in $\mathfrak C^{k,p}(B_R^{\mathbb H})$ with $u_n\to u$ in $L^p(B_R^{\mathbb H})$. Denote the sequence $(v_n)$ given by $v_n(d(x))=u_n(x)$ for all $x\in B_R^{\mathbb H}$. From Lemma \ref{lemmaagh}, the Cauchy-Schwarz inequality (on vectors in $\mathbb R^{N^j}$), and the equation \eqref{eqnormkcov},
\begin{align}
|v^{(j)}_n(d(x))|^2&=\left(\frac{1-|x|^2}2\right)^{2j}\left(\sum_{i_1\ldots i_j=1}^N(\nabla^j u_n)_{i_1\ldots i_j}\dfrac{x_{i_1}\cdots x_{i_j}}{|x|^j}\right)^2\nonumber\\
&\leq\left(\frac{1-|x|^2}2\right)^{2j}\left(\sum_{i_1\ldots i_j=1}^N(\nabla^j u_n)^2_{i_1\ldots i_j}\right)\left(\sum_{i_1\ldots i_j=1}^N\left(\dfrac{x_{i_1}\cdots x_{i_j}}{|x|^j}\right)^2\right)\nonumber\\
&=\left(\frac{1-|x|^2}2\right)^{2j}\left(\sum_{i_1\ldots i_j=1}^N(\nabla^j u_n)^2_{i_1\ldots i_j}\right)=|\nabla^ju_n|_g^2,\quad\forall j=0,\ldots,k.\label{eq3100}
\end{align}
Then $v_n\in L^p((0,R),\sinh^{N-1}(t))$ and
\begin{align*}
\|v_n^{(j)}-v_m^{(j)}\|_{L^p_{\sinh^{N-1}}}^p&=\int_0^R|v^{(j)}_n-v^{(j)}_m|^p\sinh^{N-1}(t)\mathrm dt\leq\dfrac{1}{\omega_{N-1}}\int_{B_R^{\mathbb H}}|\nabla u_n^j-\nabla u_m^j|_g^p\mathrm dV_g\\
&=\frac{1}{\omega_{N-1}}\|\nabla^ju_n-\nabla^ju_m\|^p_{L^p(B_R^{\mathbb H})},\quad\forall j=0,\ldots,k\mbox{ and }n,m\in \mathbb N,
\end{align*}
we used the same argument as in \eqref{eq3100} for $v_n-v_m$. Thus, $(v_n^{(j)})$ is a Cauchy sequence in $L^p((0,R),\sinh^{N-1}(t))$ for all $j=0,\ldots,k$ and, by Lemma \ref{lemmaagh1}, $v$ has $k$ weak derivatives with $v\in W^{k,p}((0,R),\sinh^{N-1}(t))$.

To prove \eqref{eqineuv}, we use \eqref{eq3100} and the fact that, up to subsequence, $v^{(j)}_n(t)$ and $\nabla^{j}u_n(x)$ converge almost everywhere to $v^{(j)}(t)$ and $\nabla^{j}u(x)$, respectively. Initially, we have $|\nabla^ku(x)|_g\geq|v^{(k)}(d(x))|$ almost everywhere in $B_R^{\mathbb H}\backslash\{0\}$. However, by applying the items (1) from both Theorem \ref{theo11} and Theorem \ref{theo12}, we expand this inequality to hold for all $x\in B_R^{\mathbb H}\backslash\{0\}$. Therefore, we concluded the proof of Theorem \ref{theouv}.
\end{proof}

\section{Relations between \texorpdfstring{$W^{k,p}_{\mathbb H,\mathrm{rad}}(B_R^{\mathbb H})$}{} and \texorpdfstring{$W^{k,p}((0,R),\sinh^{N-1}(t))$}{}}\label{sec4}

Throughout this section, unless specified otherwise, we assume $R\in(0,\infty]$. Our primary goal is to establish a comprehensive relationship between the conditions under which $u\in W^{k,p}_{\mathbb H,\mathrm{rad}}(B^{\mathbb H}_R)$ implies $v\in W^{k,p}((0,R),\sinh^{N-1}(t))$ and vice-versa, where $u$ and $v$ are related by $u(x)=v(d(x))$.

Proposition \ref{propa}, a direct consequence of Theorem \ref{theouv}, addresses one direction of this relationship. Proposition \ref{propc} will provide a counterexample demonstrating that $v\in W^{k,p}((0,R),\sinh^{N-1}(t))$ does not necessarily imply $u\in W^{k,p}_{\mathbb H,\mathrm{rad}}(B^{\mathbb H}_R)$.

The most challenging part of this section is the proof of Proposition \ref{propb}, which requires determining an expression for the derivatives of $u$ in terms of the derivatives of $v$, essentially the inverse of Lemma \ref{lemmaagh1}. Although an explicit equation was not obtained, we derived a recursive formula, expressed as a linear combination of terms similar to \eqref{termsderivative}, where each term arises from applying the recursive formula in \eqref{termsk+1} to $(\nabla u)_{i_k}$. This recursive formula was sufficient to complete the proof of Proposition \ref{propb}.

Theorem \ref{theo10} summarizes the results of this section. Its proof follows directly from Propositions \ref{propa}, \ref{propb}, and \ref{propc}.

\begin{prop}\label{propa}
For each $p\geq1$ and $k\geq1$ integer, we have
\begin{enumerate}
    \item[(a)] $W^{k,p}_{\mathbb H,\mathrm{rad}}(B_R^{\mathbb H})\hookrightarrow W^{k,p}((0,R),\sinh^{N-1}(t))$;
    \item[(b)] $W^{1,p}_{\mathbb H,\mathrm{rad}}(B_R^{\mathbb H})\equiv W^{1,p}((0,R),\sinh^{N-1}(t))$.
\end{enumerate}
\end{prop}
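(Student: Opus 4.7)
For part (a), the plan is to apply Theorem \ref{theouv} at each order $j = 1, \ldots, k$, using that any $u \in W^{k,p}_{\mathbb H,\mathrm{rad}}(B_R^{\mathbb H})$ also lies in $W^{j,p}_{\mathbb H,\mathrm{rad}}(B_R^{\mathbb H})$, to obtain the pointwise inequality $|\nabla^j u(x)|_g \ge |v^{(j)}(d(x))|$ on $B_R^{\mathbb H} \setminus \{0\}$. Raising this to the $p$-th power, integrating over $B_R^{\mathbb H}$, and converting to hyperbolic polar coordinates via \eqref{eqpolar} yields
\begin{equation*}
\int_0^R |v^{(j)}(t)|^p \sinh^{N-1}(t)\,\mathrm dt \le \frac{1}{\omega_{N-1}} \int_{B_R^{\mathbb H}} |\nabla^j u|_g^p\,\mathrm dV_g, \quad j = 0, 1, \ldots, k.
\end{equation*}
Summing over $j$ produces the continuous embedding $W^{k,p}_{\mathbb H,\mathrm{rad}}(B_R^{\mathbb H}) \hookrightarrow W^{k,p}((0,R),\sinh^{N-1}(t))$ with explicit constant $\omega_{N-1}^{-1/p}$.

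For part (b), the forward embedding is the $k=1$ case of (a); the substantive content is the reverse inclusion. Given $v \in W^{1,p}((0,R), \sinh^{N-1}(t))$, define $u(x) := v(d(x))$. To exhibit $u$ as an $L^p$-limit of smooth functions in $\mathfrak C^{1,p}(B_R^{\mathbb H})$, I would extend $v$ to an even function $\tilde v(t) := v(|t|)$ on $(-R, R)$ (or on $\mathbb R$ when $R = \infty$) and mollify it with a symmetric one-dimensional mollifier $\rho_{1/n}$, setting $v_n := \tilde v * \rho_{1/n}$. The joint evenness of $\tilde v$ and $\rho_{1/n}$ forces every odd-order derivative of $v_n$ at $0$ to vanish, so $u_n(x) := v_n(d(x))$ extends smoothly across the origin of $B_R^{\mathbb H}$ and belongs to $\mathfrak C^{1,p}(B_R^{\mathbb H})$. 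For smooth radial $v_n$ the chain rule together with $|\nabla d|_g = 1$ gives the pointwise identity $|\nabla u_n|_g = |v_n'(d(x))|$, so polar coordinates produce the exact norm matching $\|u_n\|_{W^{1,p}(B_R^{\mathbb H})}^p = \omega_{N-1} \|v_n\|_{W^{1,p}_{\sinh^{N-1}}}^p$.

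Granted the convergence $v_n \to v$ in $W^{1,p}((0,R), \sinh^{N-1}(t))$, the sequence $(u_n)$ is Cauchy in $W^{1,p}(B_R^{\mathbb H})$ with $L^p$-limit equal to $u$, placing $u \in W^{1,p}_{\mathbb H,\mathrm{rad}}(B_R^{\mathbb H})$; passing to the limit in the norm identity produces $\|u\|_{W^{1,p}(B_R^{\mathbb H})}^p = \omega_{N-1} \|v\|_{W^{1,p}_{\sinh^{N-1}}}^p$, which is the claimed equivalence. The main obstacle is verifying the one-dimensional mollification convergence in the weighted norm: on any compact subinterval of $(0, R)$ the weight $\sinh^{N-1}(t)$ is smooth and bounded away from zero, so convergence is classical; at $t = 0$ the weight degenerates like $t^{N-1}$, and the even extension is essential, since otherwise odd-derivative contributions across the origin would be amplified by the mollifier. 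When $R < \infty$, a smooth cutoff near $\pm R$ prior to mollifying handles boundary behavior.
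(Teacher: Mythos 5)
Your part (a) is correct and is exactly the paper's argument: apply the pointwise bound of Theorem \ref{theouv} at each order $j\le k$, integrate in hyperbolic polar coordinates \eqref{eqpolar}, and sum, which gives the embedding with constant $\omega_{N-1}^{-1/p}$. The forward half of (b), including the exact identity $|\nabla u|_g=|v'(d(x))|$ for radial functions, also matches the paper, whose proof of (b) consists precisely of observing that equality holds in Theorem \ref{theouv} at orders $0$ and $1$, so that the correspondence $u\leftrightarrow v$ is a norm identity up to the factor $\omega_{N-1}^{1/p}$; the paper does not build a mollified approximating sequence at all.

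The genuine gap is in your construction for the reverse inclusion of (b). Membership in $W^{1,p}((0,R),\sinh^{N-1}(t))$ only requires weak differentiability on the open interval $(0,R)$ and integrability against the weight $\sinh^{N-1}(t)\sim t^{N-1}$, so the space contains profiles that blow up at $t=0$: for example $v(t)=t^{-\alpha}$ lies in it whenever $(\alpha+1)p<N$. If $N>2p$ one may take $\alpha\ge1$, and then the even extension $\tilde v(t)=v(|t|)$ is not locally integrable near $0$, so $\tilde v*\rho_{1/n}$ is not even defined on a neighborhood of the origin. Even for $0<\alpha<1$ the even extension is not weakly differentiable across $0$ (its candidate derivative behaves like $|t|^{-\alpha-1}\notin L^1_{\mathrm{loc}}$, and the distributional derivative has a non-function part at the origin), so $(\tilde v*\rho_{1/n})'$ near $t=0$ is not the mollification of the odd extension of $v'$, and your claimed convergence $v_n\to v$ in the weighted $W^{1,p}$ norm is not established there; note that $|t|^{N-1}$ fails the $A_p$ condition as soon as $N\ge p$, so convergence of one-dimensional mollification in this weighted norm is not the ``classical'' statement you invoke. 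A second, smaller defect: for $R<\infty$, multiplying by a cutoff vanishing near $t=R$ before mollifying approximates the truncated function, and the cutoff error does not vanish in the norm (the term $\eta_\delta' v$ has size $\delta^{-1}|v(R)|$ on an interval of length $\delta$, and $v(R)\ne0$ in general); near $R$ the right device is extension or reflection across $t=R$, where the weight is comparable to a constant. To repair the argument one must treat the origin in a weight-adapted way, e.g.\ verify directly that $u=v\circ d$ has the weak gradient $v'(d(x))\tfrac{2}{1-|x|^2}\tfrac{x}{|x|}$ across the origin (both $u$ and this field lie in $L^p(B_R^{\mathbb H})$ by the norm identity) and then invoke density of smooth functions in the weak-derivative Sobolev space, or control cutoff errors near $t=0$ using the Hardy-type inequality of Proposition \ref{prophardy}; plain even reflection plus mollification does not suffice.
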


\begin{proof}
\textit{(a)} Using Theorem \ref{theouv} and \eqref{eqpolar}, we have
\begin{align*}
\|\nabla^ju\|^p_{L^p(B^{\mathbb H}_R)}&=\int_{B_R^{\mathbb H}}|\nabla^ju(x)|_g^p\mathrm dV_g\leq \omega_{N-1}\int_0^R|v^{(j)}(t)|^p\sinh^{N-1}(t)\mathrm dt\\
&=\omega_{N-1}\|v^{(j)}\|^p_{L^p_{\sinh^{N-1}}},\quad\forall j=0,\ldots,k.
\end{align*}

\noindent \textit{(b)} It similarly follows as the item \textit{(a)}, but using that in the Theorem \ref{theouv} the equality holds for $k=0$ and $k=1$. The equality $|\nabla^0u(x)|=|v^{(0)}(d(x))|$ is trivial and the case $k=1$ follows from
\begin{equation*}
\left(\nabla u\right)_i=\dfrac{x_i}{|x|}\dfrac{2}{1-|x|^2}v'(d(x))
\end{equation*}
and
\begin{equation*}
\left|\nabla u\right|_g^2=\left(\dfrac{1-|x|^2}2\right)^2\sum_{i=1}^N\left(\nabla u\right)_i^2=|v'(d(x))|^2.
\end{equation*}
\end{proof}

\begin{prop}\label{propb}
Assume $R\in(0,\infty)$. Let $k\geq2$ and $p\geq1$. If $N>(k-1)p$, then $W^{k,p}_{\mathbb H,\mathrm{rad}}(B_R^{\mathbb H})\equiv W^{k,p}((0,R),\sinh^{N-1}(t))$.
\end{prop}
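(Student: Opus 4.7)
By Proposition \ref{propa}(a), the inclusion $W^{k,p}_{\mathbb H,\mathrm{rad}}(B_R^{\mathbb H}) \hookrightarrow W^{k,p}((0,R),\sinh^{N-1}(t))$ is already at hand, so my task is to establish the reverse embedding: for $v \in W^{k,p}((0,R),\sinh^{N-1}(t))$, show that $u(x) := v(d(x))$ lies in $W^{k,p}_{\mathbb H,\mathrm{rad}}(B_R^{\mathbb H})$ with a continuous dependence
\[
\|u\|_{W^{k,p}_{\mathbb H}(B_R^{\mathbb H})} \leq C(k,N,p,R)\,\|v\|_{W^{k,p}((0,R),\sinh^{N-1}(t))}.
\]
The plan is to derive this estimate first for a smooth approximating sequence $v_n$ of $v$ in the weighted Sobolev norm (standard since the weight is smooth and positive away from $0$ and behaves like $t^{N-1}$ near $0$), and then transfer Cauchyness to $u_n := v_n\circ d \in \mathfrak{C}^{k,p}(B_R^{\mathbb H})$ to obtain $u \in W^{k,p}_{\mathbb H,\mathrm{rad}}(B_R^{\mathbb H})$ in the sense of the completion definition given in Section~\ref{sec2}.

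The core of the argument is a pointwise inversion of Lemma~\ref{lemmaagh1}. Starting from $(\nabla u)_i(x) = v'(d(x))\cdot\frac{2x_i}{(1-|x|^2)|x|}$ and iterating the covariant-derivative recursion \eqref{eqsf}, I expect, by induction on $j$, an identity
\[
(\nabla^j u)_{i_1\ldots i_j}(x) \;=\; \sum_{\ell=1}^{j} v^{(\ell)}(d(x))\, P^{(j,\ell)}_{i_1\ldots i_j}(x),\qquad x \in B_R^{\mathbb H}\setminus\{0\},
\]
in which each $P^{(j,\ell)}_{i_1\ldots i_j}$ is smooth on $B_R^{\mathbb H}\setminus\{0\}$ and satisfies the scaling bound $\bigl|P^{(j,\ell)}_{i_1\ldots i_j}(x)\bigr| \leq C_{j,\ell}\,(1-|x|^2)^{-j}\,|x|^{\ell-j}$. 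The observation driving the scaling is that each application of \eqref{eqsf} either (i) differentiates $d(x)$, raising the order of $v^{(\ell)}$ by one at the cost of an extra $(1-|x|^2)^{-1}$; (ii) differentiates an angular factor $x_i/|x|$, producing an additional $|x|^{-1}$ (plus, through the $1/(1-|x|^2)$ coefficient, another $(1-|x|^2)^{-1}$); or (iii) contributes a Christoffel correction, whose scaling in \eqref{symChris} is of the same type. Squaring, summing over multi-indices, and multiplying by the conformal factor $\bigl(\tfrac{1-|x|^2}{2}\bigr)^{2j}$ from \eqref{eqnormkcov} absorbs all $(1-|x|^2)^{-2j}$ singularities and, by Cauchy--Schwarz, yields the clean pointwise estimate
\[
|\nabla^j u|_g(x) \;\leq\; C \sum_{\ell=1}^{j} \frac{|v^{(\ell)}(d(x))|}{|x|^{j-\ell}},\qquad x\in B_R^{\mathbb H}\setminus\{0\},
\]
where the boundedness of $(1-|x|^2)^{-1}$ on $\overline{B_R^{\mathbb H}}$ (which uses $R<\infty$) has been absorbed into $C$.

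Raising this inequality to the $p$-th power, integrating against $\mathrm dV_g$, and passing to polar coordinates \eqref{eqpolar} with $|x|=\tanh(t/2)$ reduces the claim to bounding
\[
\sum_{\ell=1}^{j}\int_0^R |v^{(\ell)}(t)|^p\,\sinh^{N-1}(t)\,\tanh^{-(j-\ell)p}(t/2)\,\mathrm dt
\]
by $\|v\|_{W^{k,p}((0,R),\sinh^{N-1})}^p$. The $\ell=j$ term is exactly $\|v^{(j)}\|_{L^p(\sinh^{N-1})}^p$; for $\ell<j$ the weight behaves like $t^{N-1-(j-\ell)p}$ near $0$, and the hypothesis $N>(k-1)p$ yields $N>(j-\ell)p$ for every $1\leq\ell<j\leq k$, which is precisely the range in which the one-dimensional weighted Hardy inequality $\int_0^R|w|^p t^{\alpha-p}\,dt \leq C\int_0^R|w'|^p t^\alpha\,dt + C|w(R)|^p$ applies with $\alpha = N-1-(j-\ell-1)p>p-1$. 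Iterating Hardy from $\ell$ up to $j-1$ collapses each lower-order integral into $\|v^{(j)}\|_{L^p(\sinh^{N-1})}^p+\|v\|_{L^p(\sinh^{N-1})}^p$, plus boundary contributions at $t=R$ that are controlled by local $L^\infty$ estimates where the weight is uniformly bounded. The principal technical obstacle is the induction for $P^{(j,\ell)}$: the Christoffel term in \eqref{eqsf} produces many cross-summations whose cancellations parallel those carried out in the proof of Lemma~\ref{lemmaagh}, and care is needed to verify that no individual summand blows up worse than the claimed $(1-|x|^2)^{-j}|x|^{\ell-j}$. Once this sharp scaling is pinned down, the weighted Hardy reduction and the density argument proceed in a routine way.
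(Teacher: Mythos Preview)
Your proposal is correct and follows essentially the same route as the paper: an inductive expansion of $(\nabla^j u)_{i_1\ldots i_j}$ as a linear combination of $v^{(\ell)}(d(x))$ times coefficient functions with the scaling $|x|^{\ell-j}$ near the origin, leading to the pointwise bound $|\nabla^j u|_g \leq C\sum_{\ell=1}^j |v^{(\ell)}(d(x))|\,|x|^{-(j-\ell)}$, and then a weighted Hardy inequality (the paper's Proposition~\ref{prophardy}, which is equivalent to your iterated power-weight Hardy via \eqref{eqcompsinh}) to close the estimate. The only cosmetic difference is that the paper parametrizes the inductive terms more explicitly via the quadruple $(a_k,b_k,\ell_k,c_k)$ in \eqref{termsderivative}--\eqref{termsk+1} and bounds each term by brute force rather than via cancellations, so your remark that ``cancellations parallel those in Lemma~\ref{lemmaagh}'' are needed is not quite accurate---straightforward upper bounds on each summand suffice.
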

\begin{proof}
Firstly, we claim that
\begin{equation}\label{dernorm}
    \left|\dfrac{\partial^k}{\partial x_{i_1}\cdots\partial x_{i_k}}|x|\right|\leq\dfrac{C_{N,k}}{|x|^{k-1}},\quad\forall i_1,\ldots,i_k=1,\ldots,N.
\end{equation}
Indeed, by induction on $k$, we can show that $\frac{\partial^k}{\partial x_{i_1}\cdots\partial x_{i_k}}|x|$ is composed of a linear combination of terms, which take the form:
\begin{equation*}
\dfrac{\delta_{a_1\widetilde a_1}\cdots\delta_{a_\ell\widetilde a_\ell}x_{\alpha_1}\cdots x_{\alpha_n}}{|x|^{2k-1-2\ell}},
\end{equation*}
where $2\ell+n=k$ and $a_1,\widetilde a_1,\ldots,a_\ell,\widetilde a_\ell,\alpha_1,\ldots,\alpha_n$ includes every possible permutations of $i_1,\ldots,i_k$. Therefore,
\begin{equation*}
\left|\dfrac{\partial^k}{\partial x_{i_1}\cdots\partial x_{i_k}}|x|\right|\leq\sum C\left|\dfrac{\delta_{a_1\widetilde a_1}\cdots\delta_{a_\ell\widetilde a_\ell}x_{j_1}\cdots x_{j_n}}{|x|^{2k-1-2\ell}}\right|\leq \sum\dfrac{C}{|x|^{2k-1-2\ell-n}}=\dfrac{C_{N,k}}{|x|^{k-1}},
\end{equation*}
which concludes \eqref{dernorm}.

Fix $v\in W^{k,p}((0,R),\sinh^{N-1}(t))$ and define $u(x)=v(d(x))$. We will prove, by induction on $k$, that $(\nabla^ku)_{i_1\ldots i_k}$ consists of a linear combination of terms, which these terms are expressed as:
\begin{equation}\label{termsderivative}
    \left(\dfrac{2}{1-|x|^2}\right)^{a_k}|x|^{b_k}PD_{k,\ell_k}|x|v^{(c_k)}(d(x)),
\end{equation}
where $1\leq a_k,c_k\leq k$, $0\leq b_k\leq k-1$, and $PD_{k,\ell_k}|x|$ denotes products of partial derivatives of $|x|$, where $k$ is the total number of partial derivatives and $\ell_k$ is the number of repeated derivatives. More specifically about $k$ and $\ell_k$, given a product of partial derivatives of $|x|$
\begin{equation*}
PD_{k,\ell_k}|x|=D^{\alpha_1}|x|\cdots D^{\alpha_j}|x|,
\end{equation*}
where $\alpha_1,\ldots,\alpha_j$ are multi-index with $|\alpha_1|,\ldots,|\alpha_j|\geq1$, we have that $k=|\alpha_1|+\cdots+|\alpha_j|$ and $\ell_k=\left(|\alpha_1|-1\right)+\cdots+\left(|\alpha_j|-1\right)$. Here the multi-index notation concerns the derivatives with index $i_1,\ldots,i_k$ which means that for a multi-index $\alpha=(\alpha^1,\ldots,\alpha^k)$, we define
\begin{equation*}
    D^\alpha|x|=\dfrac{\partial^{|\alpha|}}{\partial x_{i_1}^{\alpha^1}\cdots\partial x_{i_k}^{\alpha^k}}|x|
\end{equation*}

For $k=1$, we have that $(\nabla u)_{i_1}$ is a linear combination of terms as in \eqref{termsderivative} because
\begin{equation*}
(\nabla u)_{i_1}=\dfrac{\partial u}{\partial x_{i_1}}=\dfrac{2}{1-|x|^2}\dfrac{\partial|x|}{\partial x_{i_1}}v'(d(x)).
\end{equation*}
Refer to the recursive expression of the covariant derivative (see Equation \eqref{eqsf}):
\begin{align*}
    (\nabla^{k+1}u)_{i_1\ldots i_{k+1}}&=\dfrac{\partial (\nabla^{k}u)_{i_2\ldots i_{k+1}}}{\partial x_{i_1}}\\
    &\quad-\dfrac{2x_{i_1}}{1-|x|^2}(\nabla^{k}u)_{i_2\ldots i_{k+1}}-\dfrac{2}{1-|x|^2}\sum_{\ell=2}^kx_{i_\ell}(\nabla^{k}u)_{i_2\ldots i_{\ell-1}i_1i_{\ell+1}\ldots i_{k+1}}.
\end{align*}
Assuming $(\nabla^ku)_{i_1\ldots i_{k}}$ is given by a linear combination of terms as in \eqref{termsderivative}, the recursive expression of the covariant derivative guarantees that
\begin{equation*}
\dfrac{\partial}{\partial x_{i_j}}\left[\left(\dfrac{2}{1-|x|^2}\right)^{a_k}|x|^{b_k}PD_{k,\ell_k}|x|v^{(c_k)}(d(x))\right]
\end{equation*}
and
\begin{equation}\label{redundant}
    \dfrac{2}{1-|x|^2}|x|\dfrac{\partial|x|}{\partial x_{i_j}}\left[\left(\dfrac{2}{1-|x|^2}\right)^{a_k}|x|^{b_k}PD_{k,\ell_k}|x|v^{(c_k)}(d(x))\right]
\end{equation}
appear on the expression of $(\nabla^{k+1}u)_{i_1\ldots i_{k+1}}$. Therefore, each term \eqref{termsderivative} manifest with the following terms in $(\nabla^{k+1}u)_{i_1\ldots i_{k+1}}$
\begin{equation}\label{termsk+1}
\left\{\begin{array}{l}
\left(\dfrac{2}{1-|x|^2}\right)^{a_k+1}|x|^{b_k+1}PD_{k+1,\ell_k}|x|v^{(c_k)}(d(x)),\\
\left(\dfrac{2}{1-|x|^2}\right)^{a_k}|x|^{b_k-1}PD_{k+1,\ell_k}|x|v^{(c_k)}(d(x))\mbox{ if }b_k\geq1,\\
\left(\dfrac{2}{1-|x|^2}\right)^{a_k}|x|^{b_k}PD_{k+1,\ell_{k}+1}|x|v^{(c_k)}(d(x)),\\
\left(\dfrac{2}{1-|x|^2}\right)^{a_k+1}|x|^{b_k}PD_{k+1,\ell_k}|x|v^{(c_k+1)}(d(x)).
\end{array}\right.
\end{equation}
Note that the first term in \eqref{termsk+1} arises when we differentiate $(\frac{2}{1-|x|^2})^{a_k}$, and from \eqref{redundant}, the second term appears when we differentiate $|x|^{b_k}$. The third term results from differentiating $PD_{k,\ell_k}|x|$, and the fourth term comes from differentiating $v^{(c_k)}(d(x))$. Since every term in \eqref{termsk+1} is in the form of \eqref{termsderivative}, we conclude by induction that $(\nabla^ku)_{i_1\ldots i_k}$ is a linear combination of \eqref{termsderivative}.

Without loss of generality, it is sufficient to show that $|\nabla^ku|_g\in L^p(B_R^{\mathbb H})$ with $k\geq1$. Note that by \eqref{dernorm}, we have
\begin{equation*}
PD_{k,\ell_k}|x|\leq \dfrac{C}{|x|^{\ell_k}}.
\end{equation*}
Fix $j=1,\ldots,k$ and consider all the terms in $(\nabla^ku)_{i_1\ldots i_k}$ that contain $v^{(j)}(d(x))$. These terms arise from applying the recursive formula in \eqref{termsk+1} a total of $k-1$ times on $(\nabla u)_{i_k}$. Since they include $v^{(j)}(d(x))$, the fourth rule in \eqref{termsk+1} must have been applied $j-1$ times, and the other three rules were applied $k-j$ times in total. Given that $0<|x|\leq \overline R<1$, the worst-case estimate (with other cases providing a better estimate) occurs when only the third rule is applied. Thus,
\begin{equation*}
    (\nabla^ku)_{i_1\ldots i_k}\leq C\sum_{j=1}^k\dfrac{v^{(j)}(d(x))}{|x|^{k-j}}.
\end{equation*}
Consequently,
\begin{align*}
    \int_{B_R^{\mathbb H}}|\nabla^ku|_g^p\mathrm dV_g&\leq C\sum_{i_1\ldots i_k}\int_{B_R^{\mathbb H}}|(\nabla^ku)_{i_1\ldots i_k}|^p\mathrm dV_g\leq C\sum_{j=1}^k\int_0^R\left|\dfrac{v^{(j)}(t)}{\tanh^{k-j}(\frac{t}2)}\right|^p\sinh^{N-1}(t)\mathrm dt\\
    &\leq C\sum_{j=1}^k\int_0^R\left|\dfrac{v^{(j)}(t)}{\sinh^{k-j}(t)}\right|^p\sinh^{N-1}(t)\mathrm dt.
\end{align*}
Since $N>(k-1)p$, we can apply the Hardy inequality from Proposition \ref{prophardy} to obtain
\begin{equation*}
\int_{B_R^{\mathbb H}}|\nabla^ku|^p_g\mathrm dV_g\leq C\sum_{j=1}^k\int_0^R|v^{(j)}(t)|^p\sinh^{N-1}(t)\mathrm dt<\infty.
\end{equation*}
Therefore, $u\in W^{k,p}_{\mathbb H,\mathrm{rad}}(B_R^{\mathbb H})$, which completes the proof.
\end{proof}

\begin{prop}\label{propc}
Assume $R\in(0,\infty)$, $k\geq2$, and $N\leq (k-1)p$. Define the function $u\colon B_R^{\mathbb H}\to\mathbb R$ by
\begin{equation*}
u(x)=d(x),\quad\forall x\in B_R^{\mathbb H}.
\end{equation*}
Then $u\notin W^{k,p}_{\mathbb H,\mathrm{rad}}(B_R^{\mathbb H})$ with $v\in W^{k,p}((0,R),\sinh^{N-1}(t))$. In particular, this implies that
\begin{equation*}
W^{k,p}_{\mathbb H,\mathrm{rad}}(B_R^{\mathbb H})\not\equiv W^{k,p}((0,R),\sinh^{N-1}(t))\mbox{ if }N\leq(k-1)p.
\end{equation*}
\end{prop}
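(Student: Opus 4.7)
The plan is to dispatch the easy direction first. For $v(t)=t$ we have $v'\equiv 1$ and $v^{(j)}\equiv 0$ for $j\geq 2$, so every integral $\int_0^R|v^{(j)}(t)|^p\sinh^{N-1}(t)\,\mathrm dt$ is finite on the bounded interval $(0,R)$; hence $v\in W^{k,p}((0,R),\sinh^{N-1}(t))$ unconditionally.

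The harder task is to verify $\int_{B_R^{\mathbb H}}|\nabla^k u|_g^p\,\mathrm dV_g=\infty$ when $N\leq(k-1)p$. My strategy is to isolate a single component $(\nabla^k u)_{i_1\cdots i_k}$, restrict to a conical annular sector $\Sigma\subset B_R^{\mathbb H}$ near the origin, and prove the pointwise lower bound $|(\nabla^k u)_{i_1\cdots i_k}(x)|\geq C|x|^{1-k}$ on $\Sigma$. From \eqref{eqnormkcov} this yields $|\nabla^k u|_g\geq C'|x|^{1-k}$ on $\Sigma$ (since $(1-|x|^2)/2$ is bounded below there), and since $\mathrm dV_g$ is comparable to Lebesgue measure near $0$ I obtain
\begin{equation*}
\int_{B_R^{\mathbb H}}|\nabla^k u|_g^p\,\mathrm dV_g\geq C''\int_0^{\rho_0}r^{N-1-(k-1)p}\,\mathrm dr=\infty
\end{equation*}
whenever $N\leq(k-1)p$.

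For the pointwise bound I will re-use the classification derived in the proof of Proposition \ref{propb}. For $v(t)=t$, the fourth rule in \eqref{termsk+1} (which raises the derivative order of $v$) never fires, so every summand of $(\nabla^k u)_{i_1\cdots i_k}$ has the form \eqref{termsderivative} with $c_k=1$. The remaining three rules change $b_k-\ell_k$ by $+1$, $-1$, $-1$ respectively, and the rule decreasing $b_k$ requires $b_k\geq 1$; starting from $(a_1,b_1,\ell_1)=(1,0,0)$, after $k-1$ steps the minimum value $b_k-\ell_k=1-k$ is attained by the unique branch that always applies rule (3). This branch produces the single term $\tfrac{2}{1-|x|^2}\,\partial_{i_1}\cdots\partial_{i_k}|x|$, while every other branch is $O(|x|^{3-k})$ near the origin. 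Expanding $\tfrac{2}{1-|x|^2}=2+O(|x|^2)$ then gives
\begin{equation*}
(\nabla^k u)_{i_1\cdots i_k}(x)=2\,\partial_{i_1}\cdots\partial_{i_k}|x|+O(|x|^{3-k}),\quad x\to 0.
\end{equation*}

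Since $\partial_{i_1}\cdots\partial_{i_k}|x|$ is $(1-k)$-homogeneous, it equals $|x|^{1-k}F_{i_1\cdots i_k}(x/|x|)$ for a smooth $F_{i_1\cdots i_k}$ on $S^{N-1}$; because $|x|$ is not a polynomial of degree less than $k$, at least one $F_{i_1\cdots i_k}$ is not identically zero on $S^{N-1}$. Fixing such indices and $\omega_0\in S^{N-1}$ with $F_{i_1\cdots i_k}(\omega_0)\neq 0$, letting $\mathcal C$ be a small spherical cap around $\omega_0$ on which $|F_{i_1\cdots i_k}|\geq c>0$, and taking $\Sigma=\{x\in B_R^{\mathbb H}\colon 0<|x|<\rho_0,\;x/|x|\in\mathcal C\}$ with $\rho_0$ small enough, I absorb the $O(|x|^{3-k})$ correction into the leading $C|x|^{1-k}$ term on $\Sigma$, obtaining the required pointwise bound. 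The main obstacle is the uniqueness claim in the recursion above: exactly one branch attains the maximal singularity and every other branch loses at least two extra powers of $|x|$, which is what prevents cancellation of the leading coefficient.
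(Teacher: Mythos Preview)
Your proof is correct and follows essentially the same route as the paper: both isolate the leading singular contribution $\tfrac{2}{1-|x|^2}\partial_{i_1}\cdots\partial_{i_k}|x|$ in $(\nabla^k u)_{i_1\cdots i_k}$ via the recursion from Proposition~\ref{propb}, bound the remainder by $o(|x|^{1-k})$, restrict to a conical sector where the leading term is bounded below by $C|x|^{1-k}$, and conclude by the divergence of $\int_0^{\rho_0}r^{N-1-(k-1)p}\,\mathrm dr$. The only cosmetic difference is that the paper fixes $i_1=\cdots=i_k=1$ and verifies the lower bound on $\partial_1^k|x|$ through an explicit recursive formula for its coefficients on the sector $\{\delta|x|\le x_1\le\varepsilon|x|\}$, whereas you argue more abstractly via homogeneity and the fact that $|x|$ is not a polynomial; both arguments are valid and yield the same conclusion.
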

\begin{proof}
Observe that $v\in W^{k,p}((0,R),\sinh^{N-1}(t))$ since $v(t)=t$. For convenience, we fixed the coordinate $x_1$ in this proof, but any other coordinate $x_i$ could have been selected. Given $\varepsilon>\delta>0$, consider the conical annular sector given by
\begin{equation*}
\Omega_{\delta\varepsilon}=\{x\in B_R^{\mathbb H}\colon \delta|x|\leq x_1\leq\varepsilon|x|\}.
\end{equation*}

We claim that there exist constants $C_i>0$ and $\varepsilon>\delta>0$, depending only on $i$, such that
\begin{equation}\label{eqd1}
\left|\dfrac{\partial^i}{\partial x_1^i}(|x|)\right|\geq\dfrac{C_i}{|x|^{i-1}},\quad\forall x\in\Omega_{\delta\varepsilon}.
\end{equation}
Indeed, using induction on $i$, we can show that
\begin{equation}\label{eqd3}
\dfrac{\partial^i}{\partial x_1^i}(|x|)=\sum_{\ell=0}^{\lfloor\frac{i}2\rfloor}C_{\ell i}\dfrac{x_1^{i-2\ell}}{|x|^{2i-1-\ell}},
\end{equation}
where $C_{00}=1$ and the following recursive rules hold:
\begin{equation*}
\left\{\begin{array}{l}
     C_{0i+1}=-(2i-1)C_{0i},\\
      C_{\ell i+1}=(i-2\ell+2)C_{\ell-1i}-(2i-1-2\ell)C_{\ell i}\mbox{ for }\ell=1,\ldots,\lfloor\frac{i}2\rfloor,\\
      C_{\lfloor\frac{i}{2}\rfloor+1 i+1}=\left\{\begin{array}{ll}
           C_{\lfloor\frac{i}2\rfloor i},&\mbox{if }i\mbox{ odd},  \\
           0&\mbox{if }i\mbox{ even}. 
      \end{array}\right.
\end{array}\right.
\end{equation*}
Assuming $C_{\lfloor\frac{i}2\rfloor,i}>0$, we obtain from \eqref{eqd3} that, for $x\in\Omega_{\delta\varepsilon}$,
\begin{equation*}
\dfrac{\partial^i}{\partial x_1^i}(|x|)\geq C_{\lfloor\frac{i}{2}\rfloor i}\dfrac{\delta^{i-2\lfloor\frac{i}{2}\rfloor}}{|x|^{i-1}}+\sum_{\substack{\ell=0\\C_{\ell i}>0}}^{\lfloor\frac{i}{2}\rfloor-1}C_{\ell i}\dfrac{\delta^{i-2\ell}}{|x|^{i-1}}+\sum_{\substack{\ell=0\\ C_{\ell i}<0}}^{\lfloor\frac{i}{2}\rfloor-1}C_{\ell i}\dfrac{\varepsilon^{i-2\ell}}{|x|^{i-1}}.
\end{equation*}
By choosing $\varepsilon>\delta>0$ sufficiently small, we conclude that
\begin{equation*}
\dfrac{\partial^i}{\partial x_1^i}(|x|)\geq \dfrac{C_{\lfloor\frac{i}{2}\rfloor i}\delta^{i-2\lfloor\frac{i}{2}\rfloor}}2\dfrac{1}{|x|^{i-1}}>0,\quad\forall x\in\Omega_{\delta\varepsilon}.
\end{equation*}
Similarly, if $C_{\lfloor\frac{i}{2}\rfloor i}<0$, for $\varepsilon>\delta>0$ sufficiently small, we have
\begin{equation*}
\dfrac{\partial^i}{\partial x_1^i}(|x|)\leq \dfrac{C_{\lfloor\frac{i}{2}\rfloor i}\varepsilon^{i-2\lfloor\frac{i}{2}\rfloor}}2\dfrac{1}{|x|^{i-1}}<0,\quad\forall x\in\Omega_{\delta\varepsilon}.
\end{equation*}
This concludes the proof of \eqref{eqd1}.

Hereafter, we fix $\varepsilon,\delta>0$ such that \eqref{eqd1} holds for all $i=1,\ldots,k$. We claim that there exist constants $C>0$ and a small $\eta>0$ such that
\begin{equation}\label{eqd12}
\left|(\nabla^ku(x))_{1\cdots1}\right|\geq \dfrac{C}{|x|^{k-1}},\quad\forall x\in\Omega_{\delta\varepsilon}\cap B_\eta^{\mathbb R}.
\end{equation}
Recalling the methods from the proof of Proposition \ref{propb}, we find that $(\nabla^ku)_{1\cdots1}$ is a linear combination of terms like \eqref{termsderivative}, derived by iterating the changes in \eqref{termsk+1} $k-1$ times on $(\nabla u)_1=\frac{2}{1-|x|^2}\frac{\partial}{\partial x_1}(|x|)$. Since $v^{(2)}\equiv0$, the fourth term in \eqref{termsk+1} is always zero. The dominant growth term as $|x|\to0$ arises when only the third rule is applied, because
\begin{equation*}
\left|PD_{k,\ell_k}|x|\right|=\left|\dfrac{\partial^{\alpha_1}}{\partial x_1^{\alpha_1}}(|x|)\cdots\dfrac{\partial^{\alpha_j}}{\partial x_1^{\alpha_j}}(|x|)\right|\leq \dfrac{C}{|x|^{\ell_k}}.
\end{equation*}
More specifically, as $|x|\to0$, we obtain
\begin{equation*}
(\nabla^k u)_{1\cdots1}=\dfrac{2}{1-|x|^2}\dfrac{\partial^k}{\partial x_1^k}(|x|)+o\left(\dfrac{1}{|x|^{k-1}}\right).
\end{equation*}
Using \eqref{eqd1}, we choose a small $\eta>0$ such that \eqref{eqd12} holds, concluding the proof of \eqref{eqd12}.

Apllying \eqref{eqd12}, we then have
\begin{align}
    \int_{B_R^{\mathbb H}}|\nabla^ku(x)|_g^p\mathrm dV_g&=\int_{B_R^{\mathbb H}}|\nabla^ku(x)|_g^p\left(\dfrac{2}{1-|x|^2}\right)^N\mathrm dx\geq 2^N\int_{B_R^{\mathbb H}}|\nabla^ku(x)|_g^p\mathrm dx\nonumber\\
    &\geq 2^{N-k}\int_{B_{\overline R}^{\mathbb R}}\left|(\nabla^ku(x))_{1\cdots1}\right|^p\mathrm dx\nonumber\\
    &=2^{N-k}\int_0^{\overline R}\int_{\partial B_1^{\mathbb R}}\left|(\nabla^ku(ry))_{1\cdots1}\right|^pr^{N-1}\mathrm d\mathcal H^{N-1}(y)\mathrm dr\nonumber\\
    &\geq2^{N-k}\int_0^{\eta}\int_{\partial B_1^{\mathbb R}\cap\Omega_{\delta\varepsilon}}\left|(\nabla^ku(ry))_{1\cdots1}\right|^pr^{N-1}\mathrm d\mathcal H^{N-1}(y)\mathrm dr\nonumber\\
    &\geq2^{N-k}C\mathcal H^{N-1}\left(\partial B_1^{\mathbb R}\cap\Omega_{\delta\varepsilon}\right)\int_0^{\eta}r^{N-1+(1-k)p}\mathrm dr.\label{asjfnasfjka1}
\end{align}
Since $N\leq(k-1)p$, we have $N-1+(1-k)p\leq-1$, and thus $u\notin W^{k,p}_{\mathbb H,\mathrm{rad}}(B_R^{\mathbb H})$ by \eqref{asjfnasfjka1}. This concludes the proof of the proposition.
\end{proof}

\begin{proof}[Proof of Theorem \ref{theo10}]
The proof follows directly from Propositions \ref{propa}, \ref{propb}, and \ref{propc}.
\end{proof}

\section{Radial Lemmata and their consequences}\label{sec5}

Throughout this section, we always assume $R\in(0,\infty)$. In this section, our purpose is to establish radial lemmata for $u\in W^{k,p}_{\mathbb H,\mathrm{rad}}(B_R^{\mathbb H})$ and $v\in W^{k,p}((0,R),\sinh^{N-1}(t))$. Additionally, we provide proofs for Theorem \ref{theo11} and Corollary \ref{corcompact}, which ensure the continuous and compact embedding of $W^{k,p}_{\mathbb H,\mathrm{rad}}(B_R^{\mathbb H})$ into $L^q_{\sinh^{\theta}}(B_R^{\mathbb H})$.

Firstly, we derive the radial lemmata and a Hardy-type inequality for the Sobolev space with weight $\sinh^{N-1}(t)$ by applying inequalities established for the Sobolev space with weight $t^{N-1}$. In Proposition \ref{proprls1}, we improve the radial lemma specifically for $N=kp$.

Then, using these radial lemmata and a Hardy-type inequality (Proposition \ref{prophardy}), we prove Theorem \ref{theo11}. Finally, we establish Corollary \ref{corcompact} using the classical compact embedding and interpolation inequality.

Utilizing the inequalities
\begin{equation}\label{eqcompsinh}
t\leq \sinh(t)\leq \dfrac{\sinh(R)}{R}t,\quad\forall t\in[0,R],
\end{equation}
we conclude that
\begin{equation}\label{equivW}
W^{k,p}((0,R),\sinh^{N-1}(t))=W^{k,p}((0,R),t^{N-1}).
\end{equation}
As the study of radial lemmata for functions in the space on the right-hand side is well established in Section 2 of \cite{zbMATH05978504}, we can immediately derive the following lemma and propositions from those results.

\begin{lemma}\label{lemma31}
There exists $C=C(N,p,R)>0$ such that
\begin{equation*}
|v(R)|\leq C\|v\|_{W^{1,p}_{\sinh^{N-1}}},\quad\forall v\in W^{1,p}((0,R),\sinh^{N-1}(t)).
\end{equation*}
\end{lemma}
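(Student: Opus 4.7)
The plan is to exploit that the weight $\sinh^{N-1}(t)$ is degenerate only at $t=0$; on any compact subinterval $[a,R] \subset (0,R]$ it is bounded above and below by positive constants, so the weighted Sobolev norm restricted to such a subinterval is comparable to the standard (unweighted) one. Consequently the lemma should reduce to the classical trace estimate for $W^{1,p}$ in one dimension.

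Concretely, I would first fix the subinterval $[R/2, R]$ and note that
\[
0 < \sinh^{N-1}(R/2) \leq \sinh^{N-1}(t) \leq \sinh^{N-1}(R), \qquad \forall t \in [R/2, R],
\]
so that for every $v \in W^{1,p}((0,R), \sinh^{N-1}(t))$ the restriction $v|_{(R/2,R)}$ lies in the standard unweighted Sobolev space $W^{1,p}(R/2, R)$ with $\|v\|_{W^{1,p}(R/2, R)} \leq C(N,R)\|v\|_{W^{1,p}_{\sinh^{N-1}}}$. Next I would invoke the classical one-dimensional Sobolev embedding $W^{1,p}(R/2, R) \hookrightarrow C([R/2, R])$, valid for every $p \geq 1$, which gives $|v(R)| \leq C' \|v\|_{W^{1,p}(R/2, R)}$; chaining the two estimates yields the desired bound.

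Alternatively, one can give a hands-on proof that avoids invoking the embedding into continuous functions: for $v$ smooth, write $v(R) = v(t) + \int_t^R v'(s)\, ds$ for $t \in (R/2, R)$, multiply both sides by $\sinh^{N-1}(t)$, integrate in $t$ over $(R/2, R)$, and apply H\"older's inequality to bound each piece by the weighted $L^p$ norms of $v$ and $v'$; a density argument using smooth functions then finishes the argument. In either approach there is no genuine obstacle: the point of the lemma is precisely that the degeneracy of the weight at $t=0$ is irrelevant to controlling the value at the non-degenerate endpoint $t=R$.
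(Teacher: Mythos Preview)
Your proposal is correct. Both the localization argument (restricting to $[R/2,R]$ and invoking the unweighted embedding $W^{1,p}(R/2,R)\hookrightarrow C([R/2,R])$) and the hands-on integral representation are valid and give the stated bound with constants depending only on $N,p,R$.

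The paper takes a slightly different route: rather than localizing away from $t=0$, it uses the global two-sided comparison \eqref{eqcompsinh}, namely $t\le\sinh(t)\le\frac{\sinh R}{R}\,t$ on $[0,R]$, to identify $W^{1,p}((0,R),\sinh^{N-1}(t))$ with the power-weighted space $W^{1,p}((0,R),t^{N-1})$ (see \eqref{equivW}), and then simply cites the corresponding trace/radial estimates already established for the power weight in \cite{zbMATH05978504}. So the paper's ``proof'' is a reduction to a known result via weight equivalence, whereas yours is a self-contained argument. Your approach has the advantage of being direct and not relying on an external reference; the paper's approach fits into a broader scheme it uses throughout Section~\ref{sec5}, where several propositions (Lemma~\ref{lemma31} through Proposition~\ref{prophardy}) are all obtained at once by transferring from the power-weight setting.
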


\begin{prop}\label{prop31}
Let $v\in W^{k,p}((0,R),\sinh^{N-1}(t))$ with $1\leq p<\infty$ and $R\in(0,\infty]$. Then there exists $V\in AC_{loc}^{k-1}((0,R])$ such that
\begin{equation*}
    v=V \quad \text{a.e. on } \quad (0,R).
\end{equation*}
Moreover, $V^{(k)}$ (in the classical sense) exists a.e. on $(0,R)$ and $V^{(k)}$ is a measurable function and
\begin{equation*}
\int_0^R\left|V^{(j)}(s)\right|^p\sinh^{N-1}(s)\mathrm ds<\infty,\quad\forall j=0,1,\ldots,k.
\end{equation*}
\end{prop}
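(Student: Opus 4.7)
The plan is to reduce Proposition \ref{prop31} to the analogous statement for the Euclidean-type weight $t^{N-1}$, which is already established in Section 2 of \cite{zbMATH05978504}. The key tool is the norm equivalence \eqref{equivW} coming from the two-sided bound \eqref{eqcompsinh} on bounded intervals.

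For the case $R\in(0,\infty)$, the bounds $t\le\sinh(t)\le\frac{\sinh R}{R}t$ on $[0,R]$ give constants $c_1,c_2>0$ with $c_1 t^{N-1}\le\sinh^{N-1}(t)\le c_2 t^{N-1}$ on $(0,R)$, so $W^{k,p}((0,R),\sinh^{N-1}(t))=W^{k,p}((0,R),t^{N-1})$ as sets with equivalent norms. Applying the corresponding result in Section 2 of \cite{zbMATH05978504} produces the desired representative $V\in AC^{k-1}_{\mathrm{loc}}((0,R])$ with $V^{(k)}$ existing a.e., being measurable, and $V^{(j)}=v^{(j)}$ a.e. for $j=0,\dots,k$. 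The integrability $\int_0^R|V^{(j)}|^p\sinh^{N-1}(s)\mathrm ds<\infty$ is then immediate since $v\in W^{k,p}((0,R),\sinh^{N-1}(t))$.

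For $R=\infty$, I would localize: for each finite $R'\in(0,\infty)$, the restriction of $v$ to $(0,R')$ belongs to $W^{k,p}((0,R'),\sinh^{N-1}(t))$, hence to $W^{k,p}((0,R'),t^{N-1})$ by the first case. This yields a representative $V_{R'}\in AC^{k-1}_{\mathrm{loc}}((0,R'])$. Given two radii $R_1<R_2$, the functions $V_{R_1}$ and $V_{R_2}$ agree a.e.\ on $(0,R_1)$ and, being continuous there (as $k-1\ge 0$ already forces continuity; or, if $k=1$, absolute continuity forces continuity), they agree pointwise on $(0,R_1]$. A standard exhaustion $R'\to\infty$ then defines a single $V\in AC^{k-1}_{\mathrm{loc}}((0,\infty))$ with the required properties, and the $L^p_{\sinh^{N-1}}$ integrability of $V^{(j)}$ follows from the monotone convergence theorem applied along the exhaustion.

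I do not expect a genuine obstacle here: the proof is essentially a transfer statement, and the only subtlety is that \eqref{eqcompsinh} fails globally when $R=\infty$, which is why the localization step is needed. Everything of substance---the existence of continuous representatives, the a.e.\ existence and measurability of the $k$-th classical derivative, and its coincidence with the weak derivative---is already in \cite{zbMATH05978504} for the weight $t^{N-1}$, and the comparison of weights on bounded subintervals transports it to the weight $\sinh^{N-1}(t)$.
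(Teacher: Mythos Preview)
Your proposal is correct and follows essentially the same approach as the paper: reduce to the known result for the weight $t^{N-1}$ from Section~2 of \cite{zbMATH05978504} via the two-sided bound \eqref{eqcompsinh} and the resulting equivalence \eqref{equivW}. Your localization argument for $R=\infty$ is a detail the paper does not spell out (Section~\ref{sec5} carries the standing assumption $R\in(0,\infty)$), so you have in fact filled a small gap.
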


\begin{prop}\label{proprls}
Assume $N>kp$. Then there exists $C=C(N,k,p,R)>0$ such that for all $v\in W^{k,p}((0,R),\sinh^{N-1}(t))$ it holds
\begin{equation}\label{rlsv}
|v(t)|\leq C\dfrac{1}{\sinh^{\frac{N-kp}{p}}(t)}\|v\|_{W^{k,p}_{\sinh^{N-1}}},\quad\forall t\in(0,R].
\end{equation}
\end{prop}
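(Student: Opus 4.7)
The plan is to reduce the statement to its Euclidean counterpart, i.e., the classical radial lemma for the weighted Sobolev space $W^{k,p}((0,R),t^{N-1})$ established in Section 2 of \cite{zbMATH05978504}. The key observation is that on the bounded interval $(0,R]$ with $R<\infty$, the weight $\sinh^{N-1}(t)$ is comparable to $t^{N-1}$, so the two weighted spaces coincide with equivalent norms.

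First, I would invoke the identification \eqref{equivW} and note that the bilateral estimate \eqref{eqcompsinh} yields
\begin{equation*}
\Bigl(\tfrac{R}{\sinh R}\Bigr)^{N-1}\!\int_0^R|v^{(j)}(t)|^p\sinh^{N-1}(t)\,\mathrm dt\leq \int_0^R|v^{(j)}(t)|^p t^{N-1}\,\mathrm dt\leq \int_0^R|v^{(j)}(t)|^p\sinh^{N-1}(t)\,\mathrm dt
\end{equation*}
for each $j=0,1,\ldots,k$, so that $\|v\|_{W^{k,p}((0,R),t^{N-1})}$ and $\|v\|_{W^{k,p}_{\sinh^{N-1}}}$ are equivalent with constants depending only on $N$, $p$, and $R$.

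Next, I would apply the classical Euclidean radial lemma stated in the introduction (see also Section 2 of \cite{zbMATH05978504}): since $N>kp$, there exists $C_1=C_1(N,k,p,R)>0$ such that for every $v\in W^{k,p}((0,R),t^{N-1})$,
\begin{equation*}
|v(t)|\leq \frac{C_1}{t^{(N-kp)/p}}\,\|v\|_{W^{k,p}((0,R),t^{N-1})},\quad \forall\, t\in(0,R].
\end{equation*}
Finally, the right-hand inequality of \eqref{eqcompsinh} gives $t^{-(N-kp)/p}\leq (\sinh R/R)^{(N-kp)/p}\sinh^{-(N-kp)/p}(t)$ on $(0,R]$; combining this with the norm equivalence above yields \eqref{rlsv} with a constant $C=C(N,k,p,R)$.

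There is no serious obstacle here; the whole point of the reduction is that $\sinh(t)\asymp t$ uniformly on the bounded range $(0,R]$, so the hyperbolic-weighted radial lemma follows from the Euclidean one by a purely algebraic comparison of weights. The only mild care needed is to absorb all $R$-dependent constants, which is harmless since $R$ is fixed throughout the section. (Note that this argument is genuinely restricted to $R<\infty$; the case $R=\infty$ would require a different approach since $\sinh(t)$ grows exponentially rather than linearly, and is instead handled by the decay lemma \eqref{aksfn} discussed in Section \ref{sec6}.)
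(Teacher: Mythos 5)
Your proposal is correct and is essentially the paper's own argument: the paper likewise uses the comparison $t\leq\sinh(t)\leq\frac{\sinh(R)}{R}t$ on $[0,R]$ (equation \eqref{eqcompsinh}) to identify $W^{k,p}((0,R),\sinh^{N-1}(t))$ with $W^{k,p}((0,R),t^{N-1})$ via \eqref{equivW}, and then imports the radial lemma from Section 2 of \cite{zbMATH05978504}, converting the factor $t^{-(N-kp)/p}$ back into $\sinh^{-(N-kp)/p}(t)$ by the same comparison. Nothing is missing; your remark that the reduction is confined to $R<\infty$ is also consistent with the paper, which treats $R=\infty$ separately by the decay lemma.
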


\begin{prop}\label{propworst}
Assume $N=kp$ and $p>1$. Then there exists a constant $C=C(k,p,R)>0$ such that for all $v\in W^{k,p}((0,R),\sinh^{N-1}(t))$ and $t\in(0,R]$ it holds
\begin{equation}
|v(t)|\leq \left(\log\frac{R}{t}\right)^{\frac{p-1}p}\dfrac{\|v^{(k)}\|_{L^p_{\sinh^{N-1}}}}{(k-1)!}+C\|v\|_{W^{k,p}_{\sinh^{N-1}}}.
\end{equation}
\end{prop}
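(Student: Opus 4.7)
The plan is to apply Taylor's formula with integral remainder centered at the right endpoint $R$ and then bound the two resulting pieces separately. By Proposition~\ref{prop31}, after modification on a null set we may assume $v\in AC^{k-1}_{\mathrm{loc}}((0,R])$ with $v^{(k)}$ existing almost everywhere and $v^{(j)}\in L^{p}((0,R),\sinh^{N-1}(s))$ for $j=0,\ldots,k$. For each $t\in(0,R]$, iterating the fundamental theorem of calculus $k$ times yields the identity
\begin{equation*}
v(t)=\sum_{j=0}^{k-1}\frac{v^{(j)}(R)}{j!}(t-R)^{j}+\frac{(-1)^{k}}{(k-1)!}\int_{t}^{R}(s-t)^{k-1}\,v^{(k)}(s)\,ds.
\end{equation*}

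For the polynomial part, each $v^{(j)}$ with $0\le j\le k-1$ belongs to $W^{1,p}((0,R),\sinh^{N-1}(t))$, so Lemma~\ref{lemma31} applied to $v^{(j)}$ gives $|v^{(j)}(R)|\le C\,\|v\|_{W^{k,p}_{\sinh^{N-1}}}$. Since $|t-R|\le R$, the whole polynomial sum is bounded by $C(k,p,R)\,\|v\|_{W^{k,p}_{\sinh^{N-1}}}$. For the remainder, we apply H\"older's inequality with exponents $p$ and $p'=p/(p-1)$, using the splitting
\begin{equation*}
(s-t)^{k-1}|v^{(k)}(s)|=\bigl[(s-t)^{k-1}s^{-(N-1)/p}\bigr]\cdot\bigl[s^{(N-1)/p}|v^{(k)}(s)|\bigr],
\end{equation*}
together with the pointwise inequality $s\le\sinh s$ on $(0,R)$, which bounds the second factor in $L^{p}$ by $\|v^{(k)}\|_{L^{p}_{\sinh^{N-1}}}$. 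It then suffices to establish
\begin{equation*}
\int_{t}^{R}(s-t)^{(k-1)p'}\,s^{-(N-1)/(p-1)}\,ds\le \log\frac{R}{t}.
\end{equation*}

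The substitution $u=s/t$ rewrites this integral as $\int_{1}^{R/t}(u-1)^{(N-p)/(p-1)}u^{-(N-1)/(p-1)}\,du$, where the $t$-prefactor collapses to $t^{0}=1$ precisely because $N=kp$. Expressing the integrand as $u^{-1}\bigl(1-u^{-1}\bigr)^{(N-p)/(p-1)}$ and noting $(N-p)/(p-1)\ge 0$ shows that it is bounded pointwise by $u^{-1}$, which delivers the displayed logarithmic bound. Raising to the power $1/p'=(p-1)/p$ and dividing by $(k-1)!$ produces the leading term $\frac{1}{(k-1)!}(\log(R/t))^{(p-1)/p}\|v^{(k)}\|_{L^{p}_{\sinh^{N-1}}}$, which combined with the polynomial estimate yields the claimed inequality. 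The delicate point, and the main obstacle, is to pin down the coefficient \emph{exactly} as $1/(k-1)!$: this forces the choice of weight $s^{N-1}$ (rather than $\sinh^{N-1}(s)$) inside the H\"older step and uses the criticality $N=kp$ twice, once to annihilate the $t$-prefactor after substitution and once to make the $u$-integrand decay precisely like $u^{-1}$, so that no extraneous multiplicative constant contaminates the leading coefficient.
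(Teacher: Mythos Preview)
Your proof is correct. The Taylor expansion at $R$, the bound on the boundary terms via Lemma~\ref{lemma31}, and the H\"older step with the weight $s^{(N-1)/p}$ all go through exactly as you describe; the substitution $u=s/t$ and the identity $(k-1)p'=(N-p)/(p-1)$ coming from $N=kp$ indeed collapse the $t$-prefactor and bound the integrand by $u^{-1}$, delivering the sharp coefficient $1/(k-1)!$.

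The paper itself does not give a direct argument for this proposition: it invokes the equivalence $W^{k,p}((0,R),\sinh^{N-1}(t))=W^{k,p}((0,R),t^{N-1})$ from \eqref{eqcompsinh}--\eqref{equivW} and then cites the corresponding radial lemma in the power-weight setting from de~Figueiredo--dos~Santos--Miyagaki. For the sharper refinement in Proposition~\ref{proprls1}, the paper instead integrates by parts $k-1$ times on $\int_t^R v^{(k)}(s)\sinh^{k-1}(s)\,ds$ and expands the derivatives of $\sinh^{k-1}$, which is tailored to produce the hyperbolic logarithm $\log\bigl(\tanh(R/2)/\tanh(t/2)\bigr)$. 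Your Taylor-remainder route is more elementary and fully self-contained for the present statement; its one limitation is that the kernel $(s-t)^{k-1}$ does not naturally interact with $\sinh^{N-1}$, which is why the paper's integration-by-parts scheme becomes preferable when one wants the tighter $\tanh$-version.
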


\begin{prop}\label{asfjanfkslns}
If $N=kp$ and $p=1$, then $W^{k,p}((0,R),\sinh^{N-1}(t))\hookrightarrow C([0,R])$. In other words, $W^{k,1}((0,R),\sinh^{k-1}(t))\hookrightarrow C([0,R])$.
\end{prop}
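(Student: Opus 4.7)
The plan is to show that every $v \in W^{k,1}((0,R), \sinh^{k-1}(t))$ admits a continuous representative on $[0,R]$ satisfying $\|v\|_{L^\infty([0,R])} \leq C\|v\|_{W^{k,1}_{\sinh^{k-1}}}$. By Proposition \ref{prop31}, we may assume $v \in AC^{k-1}_{\mathrm{loc}}((0,R])$, so $v$ is already continuous on $(0,R]$. The task reduces to bounding $v$ uniformly near the origin and verifying that $\lim_{t\to 0^+}v(t)$ exists.

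The main tool I would use is Taylor's formula with integral remainder around an interior base point $t_0 \in (R/2, R)$: for every $t \in (0, R]$,
$$v(t) = \sum_{j=0}^{k-1} \frac{v^{(j)}(t_0)}{j!}(t-t_0)^j + \frac{1}{(k-1)!}\int_{t_0}^{t}(t-s)^{k-1} v^{(k)}(s)\,ds.$$
When $t \in (0, R/2]$ and $s$ lies between $t$ and $t_0$, we have $|t-s| \leq s \leq \sinh(s)$ by \eqref{eqcompsinh}, hence $|t-s|^{k-1} \leq \sinh^{k-1}(s)$, and the integral remainder is dominated by $\|v^{(k)}\|_{L^1_{\sinh^{k-1}}}/(k-1)!$ uniformly in $t$ and $t_0$.

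To control the polynomial coefficients without any boundary condition, I would average the Taylor identity over $t_0 \in (R/2, R)$. Since $\sinh^{k-1}(t_0) \geq \sinh^{k-1}(R/2) > 0$ on this interval,
$$\frac{2}{R}\int_{R/2}^{R} |v^{(j)}(t_0)|\,dt_0 \leq \frac{2}{R\,\sinh^{k-1}(R/2)}\,\|v^{(j)}\|_{L^1_{\sinh^{k-1}}}, \quad j = 0, 1, \ldots, k-1,$$
and using $|t-t_0| \leq R$ to bound $|t-t_0|^j$, I obtain $\sup_{(0, R/2]} |v(t)| \leq C\|v\|_{W^{k,1}_{\sinh^{k-1}}}$. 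On $[R/2, R]$, the identity $v(t) = v(R) - \int_t^R v'(s)\,ds$, combined with Lemma \ref{lemma31} to bound $|v(R)|$ and the same weight-bounded-below argument applied to $\int_{R/2}^R |v'(s)|\,ds$, yields the desired $L^\infty$ estimate on $[R/2, R]$.

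Finally, continuity at $0$ follows from dominated convergence in the Taylor identity: as $t \to 0^+$, the integrand $(t-s)^{k-1} v^{(k)}(s) \chi_{[t, t_0]}(s)$ converges pointwise and is dominated by $\sinh^{k-1}(s)|v^{(k)}(s)| \in L^1(0, R)$, so $\lim_{t \to 0^+} v(t)$ exists as a finite number, extending $v$ continuously to $[0, R]$. The main obstacle is the absence of any boundary condition, which prevents a direct pointwise bound on the interior values $v^{(j)}(t_0)$; the averaging trick, exploiting the fact that $\sinh^{k-1}$ is bounded below on the fixed annulus $(R/2, R)$, is what overcomes it.
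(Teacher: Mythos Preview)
Your proof is correct. The paper, however, does not prove this proposition directly: it observes the weight equivalence $W^{k,p}((0,R),\sinh^{N-1}(t))=W^{k,p}((0,R),t^{N-1})$ from \eqref{eqcompsinh}--\eqref{equivW} and then simply imports the corresponding result for the power-weight space from Section~2 of \cite{zbMATH05978504}. So the paper's argument is a one-line reduction to an external reference.

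Your route is genuinely different and self-contained. Using Taylor's formula with integral remainder, together with the averaging over $t_0\in(R/2,R)$ to avoid any pointwise control of $v^{(j)}(t_0)$, is exactly the right substitute for the missing boundary condition, and the domination $|t-s|^{k-1}\le s^{k-1}\le \sinh^{k-1}(s)$ handles both the $L^\infty$ bound and the continuity at $0$ via dominated convergence. What the paper's approach buys is brevity (and it inherits the result for the full range of $p$ simultaneously); what your approach buys is that it works entirely within the $\sinh$-weighted setting and exposes precisely where $p=1$ is used---namely, in allowing the remainder to be bounded by a single $L^1$-norm rather than requiring H\"older's inequality, which for $p>1$ would reproduce the logarithmic growth of Proposition~\ref{proprls1} instead of a uniform bound.
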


\begin{prop}\label{prophardy}
Given $j=0,\ldots,k$ with $N>jp$, there exists $C_j=C(j,p,R,k,N)>0$ such that for all $v\in W^{k,p}((0,R),\sinh^{N-1}(t))$,
\begin{equation}\label{tl3}
\int_0^R\left|\dfrac{v^{(k-j)}(s)}{\sinh^j(s)}\right|^p\sinh^{N-1}(s)\mathrm ds\leq C_j\sum_{i=k-j}^k\int_0^R|v^{(i)}(s)|^p\sinh^{N-1}(s)\mathrm ds.
\end{equation}
In particular, assuming $N>kp$,
\begin{equation*}
\left\|\dfrac{v}{\sinh^k(s)}\right\|_{L^p_{\sinh^{N-1}}(0,R)}\leq C_k\|v\|_{W^{k,p}_{\sinh^{N-1}}}.
\end{equation*}
\end{prop}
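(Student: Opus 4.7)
The plan is to reduce, via the weight comparability $\sinh^\sigma(t)\asymp t^\sigma$ on $[0,R]$ from \eqref{eqcompsinh} and the identification \eqref{equivW}, to the polynomially weighted Hardy-type inequality
\begin{equation*}
\int_0^R |v^{(k-j)}(s)|^p s^{N-1-jp}\,ds \leq C \sum_{i=k-j}^k \int_0^R |v^{(i)}(s)|^p s^{N-1}\,ds, \qquad N > jp,
\end{equation*}
for $v\in W^{k,p}((0,R),t^{N-1})$. Both sides of \eqref{tl3} are bounded above and below by constant multiples of the corresponding integrals in which $\sinh$ is replaced by $t$, so the reduction loses nothing. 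The target inequality is a classical radial Hardy estimate in the Euclidean setting, of the kind developed in \cite[Section 2]{zbMATH05978504}.

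I would then argue by induction on $j\in\{0,1,\ldots,k\}$, the case $j=0$ being trivial. For the inductive step, set $w=v^{(k-j)}$ and, for smooth $w\in C^\infty([0,R])$, integrate by parts using $s^{N-1-jp}=\frac{d}{ds}\bigl(s^{N-jp}/(N-jp)\bigr)$. The condition $N>jp$ both makes the coefficient $(N-jp)^{-1}$ finite and forces the boundary contribution at $s=0$ to vanish, producing
\begin{equation*}
(N-jp)\int_0^R |w|^p s^{N-1-jp}\,ds = |w(R)|^p R^{N-jp} - p\int_0^R |w|^{p-2}w\,w'\,s^{N-jp}\,ds.
\end{equation*}
Writing $s^{N-jp}=s^{(N-1-jp)(p-1)/p}\cdot s^{(N-1-(j-1)p)/p}$ and applying Young's inequality with a small absorption parameter to the remaining integral yields
\begin{equation*}
\int_0^R |w|^p s^{N-1-jp}\,ds \leq C|w(R)|^p + C\int_0^R |w'|^p s^{N-1-(j-1)p}\,ds.
\end{equation*}
Since $w'=v^{(k-(j-1))}$ and $N>(j-1)p$ as well, the inductive hypothesis controls the second integral by $\sum_{i=k-j+1}^k\int_0^R|v^{(i)}|^p s^{N-1}\,ds$. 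Density of $C^\infty([0,R])$ in $W^{k,p}((0,R),\sinh^{N-1}(t))$ then extends the estimate from smooth test functions to the full space, and specializing to $j=k$ recovers the last assertion of the proposition.

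The remaining boundary term $|v^{(k-j)}(R)|^p$ is handled by Lemma \ref{lemma31} applied to $v^{(k-j)}\in W^{1,p}((0,R),\sinh^{N-1}(t))$ (valid since $v^{(k-j)}$ is a derivative of order at most $k-1$ of $v\in W^{k,p}$): this gives $|v^{(k-j)}(R)|^p\leq C\|v^{(k-j)}\|_{W^{1,p}_{\sinh^{N-1}}}^p$, which is dominated by $\sum_{i=k-j}^{k-j+1}\int_0^R|v^{(i)}|^p\sinh^{N-1}\,ds$ and hence by the target right-hand side of \eqref{tl3}. The principal obstacle is the absence of boundary conditions, which leaves a residual trace at $s=R$ at every iteration of Hardy; it is precisely Lemma \ref{lemma31} that makes these trace contributions absorbable. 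The strict inequalities $N>jp$ at each level are equally essential, since they simultaneously yield a positive coefficient $N-jp$ for the Young-type absorption step and kill the $s=0$ endpoint term in the integration-by-parts.
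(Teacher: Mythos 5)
Your proposal is correct, and its opening move is precisely the paper's entire proof: the paper obtains Proposition \ref{prophardy} by using \eqref{eqcompsinh} to identify $W^{k,p}((0,R),\sinh^{N-1}(t))$ with $W^{k,p}((0,R),t^{N-1})$ as in \eqref{equivW} and then simply importing the power-weight Hardy inequality without boundary conditions from \cite[Section 2]{zbMATH05978504}. Where you differ is that you re-derive that imported ingredient rather than cite it: your induction on $j$, integrating $|w|^p$ against $s^{N-1-jp}=\frac{d}{ds}\bigl(s^{N-jp}/(N-jp)\bigr)$, absorbing the cross term by Young's inequality, and controlling the residual trace $|v^{(k-j)}(R)|^p$ by Lemma \ref{lemma31}, is exactly the standard proof of such boundary-condition-free Hardy estimates, and all the exponent bookkeeping checks out (in particular $s^{N-jp}=s^{(N-1-jp)(p-1)/p}\,s^{(N-1-(j-1)p)/p}$ and the use of $N>jp\Rightarrow N>(j-1)p$ for the inductive hypothesis). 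Two minor points: the appeal to density of $C^\infty([0,R])$ is not needed and is the only step you leave unjustified — you can instead work directly with the $AC^{k-1}_{loc}((0,R])$ representative of Proposition \ref{prop31} and integrate by parts on $[\delta,R]$, where the endpoint term at $s=\delta$ enters with a negative sign and can be discarded before letting $\delta\to0$ (this also guarantees the finiteness needed for the absorption step); and for $p=1$ the Young-with-absorption step degenerates, but then the cross term is already $\int_0^R|w'|\,s^{N-1-(j-1)}\,ds$ and no absorption is required. So your route buys a self-contained proof at the cost of redoing what the paper delegates to the citation.
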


In the Sobolev limit case ($N=kp$), it will be necessary to improve the estimate in Proposition \ref{propworst}. We achieve a better estimate in our next proposition because
\begin{equation*}
\log\left(\frac{R}{t}\right)>\log\dfrac{\tanh\left(\frac{R}2\right)}{\tanh\left(\frac{t}2\right)},\quad\forall t\in(0,R).
\end{equation*}

\begin{prop}\label{proprls1}
Assume $N=kp$ and $p>1$. Then there exists a constant $C=C(k,p,R)>0$ such that for all $v\in W^{k,p}((0,R),\sinh^{N-1}(t))$ and $t\in(0,R]$ it holds
\begin{equation}\label{LR-A}
|v(t)|\leq \left(\log\dfrac{\tanh(\frac{R}2)}{\tanh(\frac{t}2)}\right)^{\frac{p-1}p}\dfrac{\|v^{(k)}\|_{L^p_{\sinh^{N-1}}}}{(k-1)!}+C\|v\|_{W^{k,p}_{\sinh^{N-1}}}.
\end{equation}
Moreover, if $k=1$ and $v(R)=0$, then \eqref{LR-A} holds with $C=0$.
\end{prop}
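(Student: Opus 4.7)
The plan is to combine a standard Taylor expansion at the right endpoint $R$ with a sharp reduction of the weighted remainder integral to the one-dimensional hyperbolic integral $\int_t^R ds/\sinh(s)$, which equals $\log(\tanh(R/2)/\tanh(t/2))$. By Proposition \ref{prop31}, we may take $v$ to be absolutely continuous together with its first $k-1$ derivatives on $(0,R]$, so for $t\in(0,R]$ Taylor's formula with integral remainder gives
\begin{equation*}
v(t)=\sum_{j=0}^{k-1}\frac{(t-R)^j}{j!}v^{(j)}(R)+\frac{(-1)^k}{(k-1)!}\int_t^R (s-t)^{k-1}v^{(k)}(s)\,ds.
\end{equation*}

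The polynomial part is easy: for each $j=0,\ldots,k-1$, $v^{(j)}\in W^{1,p}((0,R),\sinh^{N-1}(t))$, so Lemma \ref{lemma31} yields $|v^{(j)}(R)|\leq C\|v\|_{W^{k,p}_{\sinh^{N-1}}}$, and $|R-t|^j\leq R^j$. This contributes the $C\|v\|_{W^{k,p}_{\sinh^{N-1}}}$ term on the right-hand side of \eqref{LR-A}. For the remainder, I apply Hölder's inequality with exponents $p$ and $p/(p-1)$ against the weight $\sinh^{N-1}(s)$:
\begin{equation*}
\int_t^R (s-t)^{k-1}|v^{(k)}(s)|\,ds\leq\|v^{(k)}\|_{L^p_{\sinh^{N-1}}}\left(\int_t^R\frac{(s-t)^{(k-1)p/(p-1)}}{\sinh^{(N-1)/(p-1)}(s)}\,ds\right)^{(p-1)/p}.
\end{equation*}

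The key algebraic step, and the main point of the proof, is to recognize that because $N=kp$, one has the identity
\begin{equation*}
\frac{(s-t)^{(k-1)p/(p-1)}}{\sinh^{(N-1)/(p-1)}(s)}=\frac{1}{\sinh(s)}\left(\frac{s-t}{\sinh(s)}\right)^{p(k-1)/(p-1)},
\end{equation*}
since $(N-1)/(p-1)-1=(kp-p)/(p-1)=p(k-1)/(p-1)$. Using the elementary estimate $\sinh(s)\geq s\geq s-t$ for $0\leq t\leq s$, the bracketed factor is at most $1$, hence
\begin{equation*}
\int_t^R\frac{(s-t)^{(k-1)p/(p-1)}}{\sinh^{(N-1)/(p-1)}(s)}\,ds\leq \int_t^R\frac{ds}{\sinh(s)}=\log\frac{\tanh(R/2)}{\tanh(t/2)}.
\end{equation*}
Substituting back yields the leading term of \eqref{LR-A} with the sharp constant $1/(k-1)!$.

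The moreover statement ($k=1$ and $v(R)=0$ giving $C=0$) is immediate: in Taylor's formula only the $j=0$ polynomial term survives, and it vanishes by hypothesis, so the whole bound reduces to the remainder estimate above with constant $1$. The only real obstacle in the argument is recognizing the factorization that isolates $1/\sinh(s)$; once that is in place the remaining steps are standard Hölder and Lemma \ref{lemma31}. This factorization is exactly what replaces the Euclidean computation (which produces $\log(R/t)$ in Proposition \ref{propworst}) by the sharper hyperbolic quantity $\log(\tanh(R/2)/\tanh(t/2))$.
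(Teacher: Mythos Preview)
Your proof is correct and takes a genuinely different route from the paper's. The paper starts from $\int_t^R v^{(k)}(s)\sinh^{k-1}(s)\,ds$, integrates by parts $k-1$ times, expands $\frac{d^{k-1}}{ds^{k-1}}(\sinh^{k-1}(s))$ as a combination of $\sinh^{k-1-j}(s)\cosh^j(s)$, isolates the leading term $(k-1)!\cosh^{k-1}(s)=(k-1)!(1+O(\sinh^2 s))$, and then controls all boundary terms and error integrals via Proposition~\ref{proprls} applied to the intermediate derivatives $v^{(j)}$. Only at the end does it apply H\"older with the weight $\sinh^{k-1}(s)$, whereupon $N=kp$ collapses the dual integral to $\int_t^R ds/\sinh(s)$. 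Your argument replaces this whole mechanism with Taylor's formula with integral remainder at $R$; the polynomial part is handled by Lemma~\ref{lemma31} alone, and the remainder integral is bounded by the factorization
\[
\frac{(s-t)^{(k-1)p/(p-1)}}{\sinh^{(N-1)/(p-1)}(s)}=\frac{1}{\sinh(s)}\left(\frac{s-t}{\sinh(s)}\right)^{p(k-1)/(p-1)}\le\frac{1}{\sinh(s)},
\]
using only $\sinh(s)\ge s\ge s-t$. This is strictly more elementary: it avoids the recursive computation of derivatives of $\sinh^{k-1}$ and does not invoke Proposition~\ref{proprls} at all. The paper's route, on the other hand, makes more transparent why $\sinh^{k-1}$ is the natural kernel in the hyperbolic setting, and the boundary analysis it performs is reused elsewhere. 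Both approaches land on the same integral $\int_t^R ds/\sinh(s)=\log(\tanh(R/2)/\tanh(t/2))$ with the sharp constant $1/(k-1)!$.
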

\begin{proof}
Firstly, using induction on $i=1,\ldots,k-1$, we are able to prove that
\begin{equation}\label{sc1}
\dfrac{\mathrm d^i}{\mathrm ds^{i}}\left(\sinh^{k-1}(s)\right)=\sum_{j=0}^iC_{ij}\sinh^{k-1-j}(s)\cosh^j(s),\quad\forall i=1,\ldots,k-1,
\end{equation}
where the constants $C_{ij}=C_{ij}(k)\in\mathbb R$ satisfy the following recursive expressions for each $i=1,\ldots,k-2$ 
\begin{equation*}
\left\{\begin{array}{l}
C_{10}=0,\ C_{11}=k-1, \\
C_{i+10}=C_{i1},\ C_{i+1i}=(k-i)C_{ii-1},\ C_{i+1i+1}=(k-1-i)C_{ii},\\
C_{i+1j}=C_{ij-1}(k-j)+C_{ij+1}(j+1),\quad \forall j=1,\ldots,i-1.
\end{array}\right.
\end{equation*}
Note that $C_{k-1k-1}=(k-1)!$. Applying $(k-1)$-times the integration by parts, we obtain
\begin{align}
\int_t^Rv^{(k)}(s)\sinh^{k-1}(s)\mathrm ds&=(-1)^{k-1}\int_t^Rv'(s)\dfrac{\mathrm d^{k-1}}{\mathrm ds^{k-1}}\left(\sinh^{k-1}(s)\right)\mathrm ds\nonumber\\
&\quad+(-1)^{k-1}\sum_{j=1}^{k-1}(-1)^j\left.\left[v^{(j)}(s)\dfrac{\mathrm d^{k-1-j}}{\mathrm ds^{k-1-j}}\left(\sinh^{k-1}(s)\right)\right]\right|_t^R.\label{sc2}
\end{align}
On the other hand, using that $(1+x)^{\frac{k-1}2}=1+O(x)$ (as $x\to0$), we have
\begin{align*}
\int_t^Rv'(s)\dfrac{\mathrm d^{k-1}}{\mathrm ds^{k-1}}\left(\sinh^{k-1}(s)\right)\mathrm ds&=\sum_{j=0}^{k-2}C_{k-1j}\int_t^Rv'(s)\sinh^{k-1-j}(s)\cosh^j(s)\mathrm ds\\
&\quad+C_{k-1k-1}\int_t^Rv'(s)\left(1+\sinh^2(s)\right)^{\frac{k-1}2}\mathrm ds\\
&=\sum_{j=0}^{k-2}C_{k-1j}\int_t^Rv'(s)\sinh^{k-1-j}(s)\cosh^j(s)\mathrm ds\\
&\quad+\!(k-1)!(v(R)-v(t))\!+\!(k-1)!\!\int_t^R\!v'(s)O(\sinh^2(s))\mathrm ds.
\end{align*}
Using \eqref{sc2} and isolating $v(t)$ we obtain
\begin{align}
v(t)&=v(R)+\int_t^Rv'(s)O(\sinh^2(s))\mathrm ds+\dfrac{(-1)^k}{(k-1)!}\int_t^Rv^{(k)}(s)\sinh^{k-1}(s)\mathrm ds\nonumber\\
&\quad+\dfrac{1}{(k-1)!}\sum_{j=1}^{k-1}(-1)^j\left.\left[v^{(j)}(s)\dfrac{\mathrm d^{k-1-j}}{\mathrm ds^{k-1-j}}\left(\sinh^{k-1}(s)\right)\right]\right|_t^R.\label{sc3}
\end{align}
Since $N=kp$ and $v^{(j)}\in W^{k-j,p}((0,R),\sinh^{N-1}(t))$ for all $j=1,\ldots,k-1$, we can apply the Proposition \ref{proprls} to get $|v^{(j)}(t)|\sinh^j(t)\leq C\|v\|_{W^{k,p}_{\sinh^{N-1}}}$ for all $t\in(0,R]$. Then,
\begin{equation}\label{sc4}
\int_t^Rv'(s)O(\sinh^2(s))\mathrm ds\leq C\|v\|_{W^{k,p}_{\sinh^{N-1}}}
\end{equation}
and, by \eqref{sc1},
\begin{equation}\label{sc5}
|v^{(j)}(t)|\dfrac{\mathrm d^{k-1-j}}{\mathrm dt^{k-1-j}}\left(\sinh^{k-1}(t)\right)\leq C\|v\|_{W^{k,p}_{\sinh^{N-1}}}\sum_{i=0}^{k-1-j}\sinh^{k-1-i-j}(t)\cosh^i(t)\leq C\|v\|_{W^{k,p}_{\sinh^{N-1}}},
\end{equation}
for all $j=1,\ldots,k-1$ and $t\in(0,R]$. Using the estimates \eqref{sc4} and \eqref{sc5} together with the Lemma \ref{lemma31} on \eqref{sc3}, we conclude
\begin{align}
|v(t)|&\leq\dfrac{1}{(k-1)!}\int_t^Rv^{(k)}(s)\sinh^{k-1}(s)\mathrm ds+C\|v\|_{W^{k,p}_{\sinh^{N-1}}}\nonumber\\
&\leq\dfrac{\|v^{(k)}\|_{L^p_{\sinh^{N-1}}}}{(k-1)!}\left(\int_t^R\sinh^{-1}(s)\mathrm ds\right)^{\frac{p-1}{p}}+C\|v\|_{W^{k,p}_{\sinh^{N-1}}}\nonumber\\
&=\left(\log\dfrac{\tanh(\frac{R}2)}{\tanh(\frac{t}2)}\right)^{\frac{p-1}p}\dfrac{\|v^{(k)}\|_{L^p_{\sinh^{N-1}}}}{(k-1)!}+C\|v\|_{W^{k,p}_{\sinh^{N-1}}},\label{sc6}
\end{align}
where $\sinh^{-1}(s)$ denotes $\left(\sinh(s)\right)^{-1}$ and not $\mathrm{arsinh}(s)$ the inverse function of the hyperbolic sine. Therefore we concluded \eqref{LR-A}.
\end{proof}
\begin{cor}\label{corrlnkp}
Assume $N=kp$ and $p>1$. Then there exists a constant $C=C(k,p,R)>0$ such that for all $u\in W^{k,p}_{\mathbb H,\mathrm{rad}}(B_R^{\mathbb H})$ and $x\in \overline{B_R}\backslash\{0\}$ it holds
\begin{equation*}
|u(x)|\leq \dfrac{\|\nabla^ku\|_{L^p(B_R^{\mathbb H})}}{\omega_{N-1}^{\frac1p}(k-1)!}\left(\log\frac{\overline R}{|x|}\right)^{\frac{p-1}p}+C\|u\|_{W^{k,p}(B_R^{\mathbb H})}.
\end{equation*}
\end{cor}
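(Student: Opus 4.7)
The plan is to deduce this corollary directly from Proposition \ref{proprls1} by transferring the scalar estimate for the radial profile $v$ into an estimate for $u$ on the hyperbolic ball. Given $u\in W^{k,p}_{\mathbb H,\mathrm{rad}}(B_R^{\mathbb H})$, write $u(x)=v(d(x))$; by Theorem \ref{theouv}, $v$ lies in $W^{k,p}((0,R),\sinh^{N-1}(t))$, so Proposition \ref{proprls1} applies and gives, for every $t\in(0,R]$,
\begin{equation*}
|v(t)|\leq \left(\log\dfrac{\tanh(R/2)}{\tanh(t/2)}\right)^{\frac{p-1}p}\dfrac{\|v^{(k)}\|_{L^p_{\sinh^{N-1}}}}{(k-1)!}+C\|v\|_{W^{k,p}_{\sinh^{N-1}}}.
\end{equation*}
I would then specialize $t=d(x)$ and use the elementary identity $\tanh(d(x)/2)=|x|$ (together with $\overline R=\tanh(R/2)$) to rewrite the logarithmic factor as $\log(\overline R/|x|)$. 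This deals with the shape of the bound; what remains is to translate the two norms on the right-hand side back to norms of $u$ on $B_R^{\mathbb H}$.

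For the dominant $v^{(k)}$ term, I would invoke the pointwise inequality $|\nabla^ku(x)|_g\geq|v^{(k)}(d(x))|$ from Theorem \ref{theouv}, raise it to the $p$-th power, integrate over $B_R^{\mathbb H}$, and apply the polar coordinate formula \eqref{eqpolar} to get
\begin{equation*}
\omega_{N-1}\int_0^R|v^{(k)}(t)|^p\sinh^{N-1}(t)\mathrm dt\leq \int_{B_R^{\mathbb H}}|\nabla^ku|_g^p\mathrm dV_g,
\end{equation*}
which yields $\|v^{(k)}\|_{L^p_{\sinh^{N-1}}}\leq \omega_{N-1}^{-1/p}\|\nabla^ku\|_{L^p(B_R^{\mathbb H})}$. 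This preserves the sharp constant $\omega_{N-1}^{-1/p}(k-1)!^{-1}$ that appears in the statement. For the lower-order term, Proposition \ref{propa}(a) (the embedding $W^{k,p}_{\mathbb H,\mathrm{rad}}(B_R^{\mathbb H})\hookrightarrow W^{k,p}((0,R),\sinh^{N-1}(t))$) provides a constant $C>0$ such that $\|v\|_{W^{k,p}_{\sinh^{N-1}}}\leq C\|u\|_{W^{k,p}(B_R^{\mathbb H})}$, and this constant may be absorbed into the $C$ of the final statement.

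Substituting these two norm inequalities into the specialized form of Proposition \ref{proprls1} and extending from $x$ with $d(x)\in(0,R]$ to all $x\in\overline{B_R^{\mathbb H}}\setminus\{0\}$ (noting that $d(x)=R$ corresponds to boundary points, and $|x|\leq\overline R$ throughout) yields exactly the claimed inequality. There is no real obstacle here: the work was done in Proposition \ref{proprls1} and Theorem \ref{theouv}, and the corollary is essentially a change-of-variables bookkeeping exercise, with the only subtle point being the verification of $\tanh(d(x)/2)=|x|$ (which follows immediately from $e^{d(x)}=(1+|x|)/(1-|x|)$) so that the sharp constant on the leading term is preserved.
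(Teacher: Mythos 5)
Your proposal is correct and coincides with the paper's intended argument: the corollary is obtained exactly by applying Proposition \ref{proprls1} to the radial profile $v$, using $\tanh(d(x)/2)=|x|$ and $\overline R=\tanh(R/2)$ to rewrite the logarithm, and transferring the norms via the pointwise inequality of Theorem \ref{theouv} together with \eqref{eqpolar} (which is precisely the content of Proposition \ref{propa}), preserving the sharp constant $\omega_{N-1}^{-1/p}[(k-1)!]^{-1}$. No gaps.
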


\begin{proof}[Proof of Theorem \ref{theo11}]
\textit{(1)} This item is the Proposition \ref{prop31}.

\vspace{0.2cm}

\noindent \textit{(2)} The equation \eqref{rls} follows directly from the radial lemma \eqref{rlsv} and Theorem \ref{theouv} with $t=\log\frac{1+|x|}{1-|x|}=d(x)$. Using \eqref{rls} and $\theta\geq0$, we have
\begin{equation}\label{ccsa}
|u|^{\frac{\theta p}{N-kp}}\sinh^\theta d(x)\leq C\|u\|^{\frac{\theta p}{N-kp}}_{W^{k,p}(B_R^{\mathbb H})}.
\end{equation}
Therefore from \eqref{ccsa} and the classical embedding $W^{k,p}_{\mathbb H}(B_R^{\mathbb H})\hookrightarrow L^{\frac{Np}{N-kp}}(B_R^{\mathbb H})$ in hyperbolic space, we conclude
\begin{align*}
\int_{B_R^{\mathbb H}}|u|^{\frac{p(\theta+N)}{N-kp}}\sinh^\theta d(x)\mathrm dV_g&\leq C\|u\|^{\frac{\theta p}{N-kp}}_{W^{k,p}(B_R^{\mathbb H})}\|u\|^{\frac{Np}{N-kp}}_{L^{\frac{Np}{N-kp}}(B_R^{\mathbb H})}\\
&\leq C\|u\|_{W^{k,p}(B_R^{\mathbb H})}^{\frac{p(\theta+N)}{N-kp}},
\end{align*}
which implies the continuous embedding $W^{k,p}_{\mathbb H,\mathrm{rad}}(B_R^{\mathbb H})\hookrightarrow L^{\frac{p(\theta+N)}{N-kp}}_{\sinh^\theta}(B_R^{\mathbb H})$.

\vspace{0.2cm}

\noindent\textit{(3)} The equation \eqref{rlsc} is a consequence of the radial lemma \eqref{LR-A} and Theorem \ref{theouv} with $t=\log\frac{1+|x|}{1-|x|}$. Therefore the embedding follows from \eqref{rlsc} and the claim:
\begin{equation}\label{claimsa}
\left(\log\frac{R}{t}\right)^{\frac{p-1}p}+1\leq C_\varepsilon t^{-\varepsilon},\quad\forall t\in(0,R],\varepsilon>0,
\end{equation}
for some $C_\varepsilon>0$ which does not depend on $t$. To prove \eqref{claimsa}, note that
\begin{align*}
\mbox{\eqref{claimsa} holds}&\Leftrightarrow\lim_{t\to0}\dfrac{\left(\log\frac{R}{t}\right)^{\frac{p-1}p}+1}{t^{-\varepsilon}}=0\quad\forall\varepsilon>0\\
&\Leftrightarrow\lim_{t\to0}\dfrac{\log R-\log t}{t^{-\varepsilon}}=0\quad\forall\varepsilon>0.
\end{align*}
Therefore, applying L'H\^opital's rule, we conclude the claim and the proof of \textit{(3)}.

\vspace{0.2cm}

\noindent\textit{(4)} Follows directly from Proposition \ref{asfjanfkslns} and Theorem \ref{theouv}.
\end{proof}

\begin{proof}[Proof of Corollary \ref{corcompact}]
Let $(u_n)$ be a bounded sequence in $W^{k,p}_{\mathbb H,\mathrm{rad}}(B_R^{\mathbb H})$. Since $W^{k,p}_{\mathbb H}(B_R^{\mathbb H})\hookrightarrow L^1(B_R^{\mathbb H})$ is compact (see \cite[Theorem 10.1]{zbMATH01447265}), we have that, up to a subsequence, $(u_n)$ is Cauchy in $L^1(B_R^{\mathbb H})$. Remember that $R=\log\frac{1+\overline R}{1-\overline R}$. By interpolation inequality and $1\leq q<p^*_\theta=\frac{(\theta+N)p}{N-kp}$, there exists $\alpha\in(0,1]$ such that
\begin{align*}
\left\|\sinh^{\frac{\theta}q}d(x)u_n-\sinh^{\frac{\theta}{q}}d(x)u_m\right\|_{L^q(B_R^{\mathbb H})}&\leq\left\|\sinh^{\frac{\theta}q}d(x)u_n-\sinh^{\frac{\theta}{q}}d(x)u_m\right\|_{L^1(B_R^{\mathbb H})}^\alpha\\
&\quad\cdot\left\|\sinh^{\frac{\theta}q}d(x)u_n-\sinh^{\frac{\theta}{q}}d(x)u_m\right\|_{L^{p^*_\theta}(B_R^{\mathbb H})}^{1-\alpha}.
\end{align*}
Diving everything by $\sinh^{\frac{\theta}{q}}(R)$ and using that $\left(\frac{\sinh d(x)}{\sinh(R)}\right)^{\frac{\theta}{q}}\leq\left(\frac{\sinh d(x)}{\sinh(R)}\right)^{\frac{\theta}{p^*_\theta}}\leq1$ for all $x\in B_R^{\mathbb H}$, we have
\begin{align*}
\dfrac{\left\|u_n-u_m\right\|_{L^q_{\sinh^\theta}(B_R^{\mathbb H})}}{\sinh^{\frac{\theta}{q}}(R)}&\leq\left\|\dfrac{\sinh^{\frac{\theta}q}d(x)}{\sinh^{\frac{\theta}q}(R)}u_n-\dfrac{\sinh^{\frac{\theta}q}d(x)}{\sinh^{\frac{\theta}q}(R)}u_m\right\|^\alpha_{L^1(B_R^{\mathbb H})}\\
&\quad\cdot\left\|\dfrac{\sinh^{\frac{\theta}{p^*_\theta}}d(x)}{\sinh^{\frac{\theta}{p^*_\theta}}(R)}u_n-\dfrac{\sinh^{\frac{\theta}{p^*_\theta}}d(x)}{\sinh^{\frac{\theta}{p^*_\theta}}(R)}u_m\right\|^{1-\alpha}_{L^{p^*_\theta}(B_R^{\mathbb H})}\\
&\leq\|u_n-u_m\|^\alpha_{L^1(B_R^{\mathbb H})}\dfrac{\left\|\sinh^{\frac{\theta}{p^*_\theta}}d(x)u_n-\sinh^{\frac{\theta}{p^*_\theta}}d(x)u_m\right\|^{1-\alpha}_{L^{p^*_\theta}(B_R^{\mathbb H})}}{\sinh^{\frac{\theta}{p^*_\theta}}(R)}\\
&\leq C\|u_n-u_m\|^\alpha_{L^1(B_R^{\mathbb H})}\|u_n-u_m\|^{1-\alpha}_{W^{k,p}(B_R^{\mathbb H})},
\end{align*}
where we used the continuous embedding $W^{k,p}_{\mathbb H,\mathrm{rad}}(B_R^{\mathbb H})\hookrightarrow L^{p^*_\theta}_{\sinh^\theta}(B_R^{\mathbb H})$. Therefore $(u_n)$ is Cauchy in $L^q_{\sinh^\theta}(B_R^{\mathbb H})$ and hence it converges.
\end{proof}

An analogous theorem can be obtained for the weighted Sobolev space $W^{k,p}((0,R),\sinh^{N-1}(t))$. The proof is very similar to the proof of Theorem \ref{theo11} and will be left to the reader. Alternatively, by setting $\alpha_0=\cdots=\alpha_k=N-1$ in \cite[Theorem 1.1]{arXiv:2302.02262} and considering the equivalence \eqref{equivW}, we derive the following theorem.

\begin{theo}\label{theoweighted}
Assume $R\in(0,\infty)$, $N\geq kp$, $\theta\geq N-kp-1$, $p\geq1$ real numbers, and $k\geq1$ integer.
\begin{flushleft}
$\mathrm{(1)}$ Every function $v\in W^{k,p}((0,R),\sinh^{N-1}(t))$ is almost everywhere equal to a function $V$ in $C^{k-1}((0,R])$. Additionally, all derivatives of $V$ of order $k$ (in the classical sense) exist almost everywhere for $t\in(0,R)$.\\
$\mathrm{(2)}$ If $N>kp$, then $W^{k,p}((0,R),\sinh^{N-1}(t))$ is continuously embedded in $L^q_{\sinh^\theta}(0,R)$ for every $1\leq q\leq\frac{(\theta+1)p}{N-kp}$.\\
$\mathrm{(3)}$ If $N=kp$, then $W^{k,p}((0,R),\sinh^{N-1}(t))$ is compactly embedded in $L^q_{\sinh^\theta}(0,R)$ for all $1\leq q<\infty$.
\end{flushleft}
\end{theo}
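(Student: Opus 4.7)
The plan is to mirror the proof of Theorem \ref{theo11} but stay entirely in the one-dimensional weighted setting, so no radial pull-back is needed. All the ingredients are already in place: Proposition \ref{prop31} gives the regularity, Propositions \ref{proprls} and \ref{proprls1} provide the radial lemmata in the sub-critical and critical regimes, and Proposition \ref{prophardy} supplies the weighted Hardy inequality. Alternatively, through the equivalence \eqref{equivW} one can specialize \cite[Theorem 1.1]{arXiv:2302.02262} to the profile $\alpha_0=\cdots=\alpha_k=N-1$; the self-contained argument below is essentially the same.

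Part (1) is immediate from Proposition \ref{prop31}. For Part (2), assume $N>kp$. By the radial lemma \eqref{rlsv}, for every $t\in(0,R]$,
\begin{equation*}
|v(t)|\leq C\,\sinh^{-\frac{N-kp}{p}}(t)\,\|v\|_{W^{k,p}_{\sinh^{N-1}}}.
\end{equation*}
Setting $q^{*}=\frac{(\theta+1)p}{N-kp}$, I would split $|v|^{q^{*}}=|v|^{q^{*}-p}\cdot|v|^{p}$, bound the first factor pointwise by the radial lemma, and use the exponent identity
\begin{equation*}
\theta-\tfrac{(N-kp)(q^{*}-p)}{p}=N-kp-1
\end{equation*}
to reduce matters to
\begin{equation*}
\int_{0}^{R}|v(t)|^{p}\sinh^{N-kp-1}(t)\,\mathrm dt=\int_{0}^{R}\left|\frac{v(t)}{\sinh^{k}(t)}\right|^{p}\sinh^{N-1}(t)\,\mathrm dt,
\end{equation*}
which is controlled by $C\|v\|_{W^{k,p}_{\sinh^{N-1}}}^{p}$ via Proposition \ref{prophardy} with $j=k$. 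This yields the critical embedding; the sub-critical exponents $1\leq q<q^{*}$ follow by interpolating between $L^{p}_{\sinh^{\theta}}$ and $L^{q^{*}}_{\sinh^{\theta}}$. The standing assumption $\theta\geq N-kp-1$ guarantees $\sinh^{\theta}\leq C\sinh^{N-kp-1}$ on $(0,R)$, so Hardy already delivers $W^{k,p}_{\sinh^{N-1}}\hookrightarrow L^{p}_{\sinh^{\theta}}$, which anchors the interpolation.

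For Part (3) with $N=kp$, the sharper radial lemma \eqref{LR-A} gives
\begin{equation*}
|v(t)|\leq C\left(\log\tfrac{\tanh(R/2)}{\tanh(t/2)}\right)^{(p-1)/p}\|v^{(k)}\|_{L^{p}_{\sinh^{N-1}}}+C\|v\|_{W^{k,p}_{\sinh^{N-1}}},
\end{equation*}
and since any power of $\log(1/t)$ is integrable against $\sinh^{\theta}(t)\,\mathrm dt\sim t^{\theta}\,\mathrm dt$ for $\theta>-1$, the continuous embedding into $L^{q}_{\sinh^{\theta}}$ follows for every $1\leq q<\infty$. Compactness is obtained exactly as in Corollary \ref{corcompact}: given a bounded sequence $(v_{n})$ in $W^{k,p}((0,R),\sinh^{N-1}(t))$, extract an $L^{1}(0,R)$-Cauchy subsequence using the classical Rellich embedding, then invoke the interpolation inequality
\begin{equation*}
\|v_{n}-v_{m}\|_{L^{q}_{\sinh^{\theta}}}\leq\|v_{n}-v_{m}\|_{L^{1}}^{\alpha}\|v_{n}-v_{m}\|_{L^{q+1}_{\sinh^{\theta}}}^{1-\alpha}
\end{equation*}
for some $\alpha\in(0,1]$, combined with the continuous embedding into $L^{q+1}_{\sinh^{\theta}}$ just established, to deduce the Cauchy property in $L^{q}_{\sinh^{\theta}}$.

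The main obstacle is purely algebraic bookkeeping: verifying the exponent identity $(N-kp)(q^{*}-p)/p=\theta-N+kp+1$ so that the Hardy inequality applies with exactly the right power of $\sinh$, and correctly calibrating the interpolation parameter $\alpha$ in the compactness step. No new conceptual input is required beyond the radial lemmata and the Hardy inequality already proved in this section.
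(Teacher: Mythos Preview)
Your proposal is correct and matches the paper's approach exactly: the paper does not give a detailed proof of Theorem~\ref{theoweighted} but states that it follows by mimicking the proof of Theorem~\ref{theo11} in the one-dimensional weighted setting, or alternatively by specializing \cite[Theorem~1.1]{arXiv:2302.02262} with $\alpha_0=\cdots=\alpha_k=N-1$ via the equivalence~\eqref{equivW}---precisely the two routes you identify. Your self-contained argument fills in the details along the first route; the only minor point is that your interpolation in Part~(2) covers $p\leq q\leq q^{*}$, while the range $1\leq q<p$ requires a separate H\"older step using that $\sinh^{\theta}(t)\,\mathrm dt$ is a finite measure on $(0,R)$, but this is routine.
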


\section{Decay Lemma and its consequences}\label{sec6}

In this section, we consider the case $R=\infty$. Our objective is to develop a decay lemma (Lemma \ref{decaylemma}) for $u\in W^{1,p}_{\mathbb H,\mathrm{rad}}(B_R^{\mathbb H})$ and establish conditions for the embedding $W^{k,p}_{\mathbb H,\mathrm{rad}}(\mathbb H^N)\hookrightarrow L^q_{\sinh^\theta}(\mathbb H^N)$. Following the same arguments as in the Euclidean scenario, developing an asymptotic decay for functions in $W^{1,p}_{\mathbb H,\mathrm{rad}}(\mathbb H^N)$ is crucial for proving these embeddings, which will be demonstrated in the proof of Theorem \ref{theo12}.

At the end of this section, we will present similar results for the weighted Sobolev space $W^{k,p}((0,\infty),\sinh^{N-1}(t))$, noting that their proofs follow analogous arguments to those used for the Sobolev space $W^{k,p}_{\mathbb H,\mathrm{rad}}(\mathbb H^N)$.

\begin{lemma}\label{decaylemma}
For each $u\in W^{1,p}_{\mathbb H,\mathrm{rad}}(\mathbb H^N)$, it holds
\begin{equation*}
|u(x)|\leq \left(\dfrac{p}{\omega_{N-1}}\right)^{\frac1p}\|u\|_{L^p(\mathbb H^N)}^{\frac{p-1}p}\|\nabla u\|_{L^p(\mathbb H^N)}^{\frac1p}\sinh^{\frac{1-N}{p}}d(x),\quad\forall x\in\mathbb H^N\backslash\{0\}.
\end{equation*}
\end{lemma}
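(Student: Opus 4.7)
The plan is to reduce the claim to a one-dimensional weighted estimate on the radial profile. By Proposition \ref{propa}(b), $u\in W^{1,p}_{\mathbb H,\mathrm{rad}}(\mathbb H^N)$ corresponds to $v\in W^{1,p}((0,\infty),\sinh^{N-1}(t))$ with $|\nabla u|_g=|v'(d(x))|$ almost everywhere (this is the $k=1$ case of Theorem \ref{theouv}, where equality holds). Combined with the polar formula \eqref{eqpolar}, the relations $\|u\|_{L^p(\mathbb H^N)}^p=\omega_{N-1}\|v\|_{L^p_{\sinh^{N-1}}}^p$ and $\|\nabla u\|_{L^p(\mathbb H^N)}^p=\omega_{N-1}\|v'\|_{L^p_{\sinh^{N-1}}}^p$ show that the stated inequality is equivalent to
\begin{equation}\label{eqplan1}
|v(t)|^p\sinh^{N-1}(t)\leq p\,\|v\|_{L^p_{\sinh^{N-1}}}^{p-1}\|v'\|_{L^p_{\sinh^{N-1}}},\qquad\forall t>0.
\end{equation}
By Proposition \ref{prop31}, I may and do replace $v$ by its representative in $AC_{\mathrm{loc}}((0,\infty))$; in particular $v$ is continuous and locally bounded, so the auxiliary function $f(s):=|v(s)|^p\sinh^{N-1}(s)$ is locally absolutely continuous on $(0,\infty)$.

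The next step is to extract a subsequence $t_n\to\infty$ with $f(t_n)\to 0$. Indeed, $\int_0^\infty f(s)\,ds=\|v\|_{L^p_{\sinh^{N-1}}}^p<\infty$, and if one had $\liminf_{s\to\infty}f(s)\geq c>0$, then $f(s)\geq c/2$ for all $s$ large enough, forcing the integral to diverge, a contradiction. On $[t,t_n]$ the chain rule for $AC_{\mathrm{loc}}$ functions gives
\begin{equation*}
f'(s)=p|v(s)|^{p-1}\mathrm{sgn}(v(s))\,v'(s)\sinh^{N-1}(s)+(N-1)|v(s)|^p\sinh^{N-2}(s)\cosh(s),
\end{equation*}
and since the second term is nonnegative, the identity $f(t)=f(t_n)-\int_t^{t_n}f'(s)\,ds$ yields
\begin{equation*}
f(t)\leq f(t_n)+p\int_t^{t_n}|v(s)|^{p-1}|v'(s)|\sinh^{N-1}(s)\,ds.
\end{equation*}

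Letting $n\to\infty$ and applying H\"older's inequality with conjugate exponents $p/(p-1)$ and $p$ to the resulting integral over $(t,\infty)$ produces
\begin{equation*}
f(t)\leq p\Big(\int_0^\infty|v|^p\sinh^{N-1}\,ds\Big)^{\frac{p-1}{p}}\Big(\int_0^\infty|v'|^p\sinh^{N-1}\,ds\Big)^{\frac{1}{p}},
\end{equation*}
which is \eqref{eqplan1}. Translating back via the $\omega_{N-1}$ factors collects a constant of $p\,\omega_{N-1}^{-(p-1)/p}\omega_{N-1}^{-1/p}=p/\omega_{N-1}$, and the $p$-th root gives the stated decay with the sharp constant $(p/\omega_{N-1})^{1/p}$. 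The only genuinely delicate point is the $\liminf$ argument at infinity and the validity of the chain rule for $|v|^p$ (for $p=1$ this is the standard formula $(|v|)'=\mathrm{sgn}(v)v'$ a.e., and for $p>1$ it follows from local boundedness of the continuous representative); everything else is a direct hyperbolic adaptation of the classical Strauss--Lions argument used to prove \eqref{dlrn}.
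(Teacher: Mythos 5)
Your proposal is correct and follows essentially the same Strauss--Lions-type argument as the paper: the core step in both is the estimate $|v(t)|^p\sinh^{N-1}(t)\leq p\int_t^\infty|v|^{p-1}|v'|\sinh^{N-1}(s)\,\mathrm ds$ (you obtain it by differentiating $f=|v|^p\sinh^{N-1}$ and discarding the nonnegative term coming from the increasing weight, the paper by inserting the ratio $(\sinh(s)/\sinh(t))^{N-1}\geq1$), followed by H\"older with the weight split between the two factors. The only difference is bookkeeping at infinity and regularity: the paper reduces by density to radial functions in $C^\infty_0(\mathbb H^N)$ so the boundary term vanishes, whereas you work directly with the absolutely continuous representative of $v$ and kill the boundary term along a subsequence $t_n\to\infty$ using integrability of $|v|^p\sinh^{N-1}$; both are valid and yield the same constant $(p/\omega_{N-1})^{1/p}$.
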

\begin{proof}
It is enough to prove the lemma for radial functions in $C^{\infty}_0(\mathbb H^N)$ because each $u\in W^{1,p}_{\mathbb H,\mathrm{rad}}(\mathbb H^N)$ can be approximated by radial functions in $C^{\infty}_{0}(\mathbb H^N)$ in the norm of $W^{1,p}_{\mathbb H}(\mathbb H^N)$. For now, assume $p>1$. Using that $u(x)=v(t)$ with $t=d(x)$, we have
\begin{align*}
|u(x)|^p&=|v(t)|^p=-\int_t^\infty(|v(s)|^p)'\mathrm ds=-p\int_t^\infty|v(s)|^{p-2}v(s)v'(s)\mathrm ds\\
&\leq p\int_t^\infty|v(s)|^{p-1}|v'(s)|\left(\frac{\sinh(s)}{\sinh(t)}\right)^{N-1}\mathrm ds\leq p\|v\|_{L^p_{\sinh^{N-1}}}^{p-1}\|v'\|_{L^p_{\sinh^{N-1}}}\sinh^{1-N}(t)\\
&=\frac{p}{\omega_{N-1}}\|u\|_{L^p(\mathbb H^N)}^{p-1}\|\nabla u\|_{L^p(\mathbb H^N)}\sinh^{1-N}d(x).
\end{align*}
This concludes the case $p>1$.

The case $p=1$ immediately follows from
\begin{equation*}
|u(x)|=|v(t)|=\left|\int_t^\infty v'(s)\left(\frac{\sinh(s)}{\sinh(t)}\right)^{N-1}\mathrm ds\right|\leq \frac{1}{\omega_{N-1}}\|\nabla u\|_{L^1(\mathbb H^N)}\sinh^{1-N}d(x).
\end{equation*}
Therefore, we conclude the lemma.
\end{proof}

Before proving Theorem \ref{theo12}, we need to establish the following proposition, which is the case $k=1$ of the theorem.

\begin{prop}\label{proppf12}
Assume $\theta\geq0$ and $p\geq1$ real numbers.
\begin{flushleft}
    $\mathrm{(a)}$ If $N>p$, then the following continuous embedding holds:
    \begin{equation*}
    W^{1,p}_{\mathbb H,\mathrm{rad}}(\mathbb H^N)\hookrightarrow L^q_{\sinh^\theta}(\mathbb H^N)\quad\text{if}\quad p\leq q\leq p^*,
    \end{equation*}
    where
    \begin{equation*}
    p^*:=\dfrac{(\theta+N)p}{N-p}.
    \end{equation*}
    Moreover, it is a compact embedding if one of the following two conditions is fulfilled:
    \begin{flushleft}
    $\mathrm{(i)}$ $\theta=N-1$ and $p<q<p^*$;\\
    $\mathrm{(ii)}$ $\theta<N-1$ and $p\leq q<p^*$.
    \end{flushleft}
    $\mathrm{(b)}$ If $N=p$, then the following continuous embedding holds:
    \begin{equation*}
    W^{1,p}_{\mathbb H,\mathrm{rad}}(\mathbb H^N)\hookrightarrow L^q_{\sinh^\theta}(\mathbb H^N)\quad\text{if}\quad p\leq q<\infty.
    \end{equation*}
    Moreover, it is compact embedding if one of the following two conditions is fulfilled:
    \begin{flushleft}
    $\mathrm{(i)}$ $\theta=N-1$ and $q>p$;\\
    $\mathrm{(ii)}$ $\theta<N-1$ and $q\geq p$.
    \end{flushleft}
\end{flushleft}
\end{prop}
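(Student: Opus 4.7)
The plan is to decompose $\mathbb H^N$ as a geodesic ball $B_{R_0}^{\mathbb H}$ together with its complement, handling the two regions with different tools: Theorem~\ref{theo11} on the ball and Lemma~\ref{decaylemma} on the tail. On $B_{R_0}^{\mathbb H}$, Theorem~\ref{theo11} furnishes
\[
\int_{B_{R_0}^{\mathbb H}}|u|^{q}\sinh^{\theta}d(x)\,dV_{g}\le C\,\|u\|_{W^{1,p}(\mathbb H^{N})}^{q},
\]
valid for $1\le q\le p^{*}$ in part (a) and $1\le q<\infty$ in part (b); so the near-origin contribution is controlled without any boundary condition. The complement is handled via the decay lemma, whose exponential strength, inherited from the exponential volume growth of $\mathbb H^{N}$, is the key new feature compared with the Euclidean setting of Theorem~\ref{theoB}.

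For the continuous embedding in (a), I write $|u|^{q}\sinh^{\theta}=|u|^{p}\cdot|u|^{q-p}\sinh^{\theta}$ on the tail (using $q\ge p$) and apply Lemma~\ref{decaylemma} to the factor $|u|^{q-p}$, reducing the tail integral to
\[
C\,\|u\|_{W^{1,p}}^{q-p}\int_{R_{0}}^{\infty}|v(t)|^{p}\sinh^{\beta}(t)\,dt,\qquad \beta:=\theta+N-1-(N-1)(q-p)/p.
\]
The decisive computation is that at $q=p^{*}$ one finds $\beta-(N-1)=[\theta(1-p)-p(N-1)]/(N-p)\le 0$ for $\theta\ge 0$, $p\ge 1$, $N>p$, so $\sinh^{\beta}(t)\le C\sinh^{N-1}(t)$ on $[R_{0},\infty)$ and the tail is dominated by $C\|u\|_{L^{p}}^{p}\|u\|_{W^{1,p}}^{p^{*}-p}\le C\|u\|_{W^{1,p}}^{p^{*}}$, settling the endpoint. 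Intermediate $q\in(p,p^{*})$ then follow by the same computation (the exponent $\beta-(N-1)$ depends monotonically on $q$) combined with a weighted Hölder interpolation against the $L^{p}$ control. Part (b), $N=p$, is handled identically using Theorem~\ref{theo11}(3)--(4) on the ball and noting that $(1-N)/p<0$ still gives pointwise decay at infinity.

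For the compactness in (a) and (b), a bounded sequence $(u_{n})\subset W^{1,p}_{\mathbb H,\mathrm{rad}}(\mathbb H^{N})$ admits, after extraction of a subsequence, strong convergence in $L^{q}_{\sinh^{\theta}}(B_{R}^{\mathbb H})$ for every fixed $R$ by Corollary~\ref{corcompact}, since the hypotheses bring us into the subcritical regime on the ball ($q<p^{*}$ in (a), $q<\infty$ in (b)). Uniform tail smallness follows from the cruder pointwise estimate $|u_{n}(x)|^{q}\le C\sinh^{-(N-1)q/p}d(x)$, which reduces the tail to $C\int_{R}^{\infty}\sinh^{\theta+N-1-(N-1)q/p}(t)\,dt$; this vanishes as $R\to\infty$ precisely when the exponent is strictly negative, and the strict inequalities in (i) ($\theta=N-1$, $p<q<p^{*}$) and (ii) ($\theta<N-1$, $p\le q<p^{*}$) are calibrated exactly to enforce this. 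The main obstacle is the delicate endpoint analysis near $q=p$: in (ii) the crude pointwise bound is just barely insufficient at $q=p$ and one must refine it by invoking the integrability of $|v|^{p}\sinh^{N-1}$ together with an absolute-continuity argument, while in (i) the strictness $q>p$ is exactly what rescues the tail exponent from the critical zero and allows the dominated-convergence step to go through.
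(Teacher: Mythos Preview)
Your decomposition into $B_{R_0}^{\mathbb H}$ and its complement, handling the ball via Theorem~\ref{theo11} and the tail via Lemma~\ref{decaylemma}, is exactly the paper's strategy; your $\beta$-exponent computation is the radial-coordinate version of the paper's tail bound~\eqref{rews}, and the compactness scheme (uniform tail smallness plus the compact embedding on bounded balls) is identical.

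The gap is in your handling of intermediate $q$ when $\theta>0$. Your exponent satisfies $\beta-(N-1)=\theta-(N-1)(q-p)/p$, which is \emph{positive} for $p\le q<p\bigl(1+\tfrac{\theta}{N-1}\bigr)$; on that range $\sinh^\beta$ is not dominated by $\sinh^{N-1}$ on $[R_0,\infty)$ and the reduction to $\|u\|_{L^p}^p$ fails. The ``weighted H\"older interpolation against the $L^p$ control'' cannot repair this: interpolating between unweighted $L^p$ and weighted $L^{p^*}_{\sinh^\theta}$ produces an $L^q$ norm with weight $\sinh^{\lambda\theta}$ for some $\lambda<1$, never $\sinh^\theta$ unless $q=p^*$. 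The same obstruction undercuts the ``absolute-continuity'' refinement you propose for compactness at $q=p$ in~(ii). In fact the assertion itself is problematic in this range: the translated bumps $v_n(t)=e^{-(N-1)n/p}\phi(t-n)$, with $\phi$ a fixed smooth bump, are bounded in $W^{1,p}((0,\infty),\sinh^{N-1})$ yet $\int_0^\infty|v_n|^q\sinh^{\theta+N-1}\,dt\asymp e^{[(\theta+N-1)-(N-1)q/p]\,n}\to\infty$ for every $q<p\bigl(1+\tfrac{\theta}{N-1}\bigr)$. The paper's own proof shares this defect: its display~\eqref{rews} terminates in $\int_{\mathbb H^N\setminus B_R^{\mathbb H}}|u|^p\sinh^{N-1}d(x)\,dV_g$, which in radial coordinates equals $\omega_{N-1}\int_R^\infty|v|^p\sinh^{2(N-1)}\,dt$ and is not controlled by $\|u\|_{W^{1,p}(\mathbb H^N)}$. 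So your gap is inherited rather than introduced; for $\theta=0$, or more generally once $q\ge p\bigl(1+\tfrac{\theta}{N-1}\bigr)$, both arguments go through as written.
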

\begin{proof}
    Let us prove both the continuous embeddings first. Let $q\geq p$ and $R\in(0,\infty)$ be fixed. Given $x\in\mathbb H^N$ with $d(x)\geq R$, by the Lemma \ref{decaylemma} we have
    \begin{equation*}
|u(x)|^{q-p}\leq\dfrac{C}{\sinh^{\frac{(N-1)(q-p)}{p}} d(x)}\|u\|_{W^{1,p}(\mathbb H^N)}^{q-p}\leq\dfrac{C}{\sinh^{\frac{(N-1)(q-p)}{p}}(R)}\|u\|_{W^{1,p}(\mathbb H^N)}^{q-p}.
    \end{equation*}
    Thus,
    \begin{align}
\int_{\mathbb H^N\backslash B_R^{\mathbb H}}\!|u|^q\sinh^\theta\! d(x)\mathrm dV_g\!&\leq\! \dfrac{C}{\sinh^{\frac{(N-1)(q-p)}p}(R)}\|u\|^{q-p}_{W^{1,p}(\mathbb H^N)}\int_{\mathbb H^N\backslash B_R^{\mathbb H}}|u|^{p}\sinh^\theta d(x)\mathrm dV_g\nonumber\\
&\leq\! \dfrac{C}{\sinh^{\frac{(N-1)(q-p)}p+N-1-\theta}\!(R)}\|u\|^{q-p}_{W^{1,p}(\mathbb H^N)}\!\int_{\mathbb H^N\backslash B_R^{\mathbb H}}\!|u|^{p}\sinh^{N-1}\!d(x)\mathrm dV_g.\label{rews}
    \end{align}
    On the other hand, Theorem \ref{theo11} guarantees that $\|u\|_{L^q_{\sinh^\theta}(B_R^{\mathbb H})}\leq C\|u\|_{W^{1,p}(\mathbb H^N)}$ and from \eqref{rews} we conclude the continuous embeddings.

    Now we need to prove the compact embeddings. Let us show that if $u_n\rightharpoonup0$ in $W^{1,p}_{\mathbb H,\mathrm{rad}}(\mathbb H^N)$, then $u_n\rightarrow0$ in $L^q_{\sinh^\theta}(\mathbb H^N)$. From our hypothesis for compact embeddings, we have $\frac{(N-1)(q-p)}p+N-1-\theta>0$. Then, by \eqref{rews} and $(u_n)$ is bounded in $W^{1,p}_{\mathbb H,\mathrm{rad}}(\mathbb H^N)$, given $\varepsilon>0$ there exists $R_0>0$ such that
    \begin{equation*}
    \int_{\mathbb H^N\backslash B_{R_0}^{\mathbb H}}|u_n|^q\sinh^\theta d(x)\mathrm dV_g<\dfrac{\varepsilon}2,\quad\forall n\in\mathbb N.
    \end{equation*}
    On the other hand, the compact embedding of Theorem \ref{theo11} guarantees $n_0\in\mathbb N$ such that
    \begin{equation*}
        \int_{B_{R_0}^{\mathbb H}}|u_n|^q\sinh^\theta d(x)\mathrm dV_g<\dfrac{\varepsilon}2,\quad\forall n\geq n_0.
    \end{equation*}
    Therefore $u_n\to0$ in $L^q_{\sinh^\theta}(\mathbb H^N)$ which concludes the proof of the proposition.
\end{proof}

\begin{proof}[Proof of Theorem \ref{theo12}]
The proof for $k=1$ is the proof of Proposition \ref{proppf12}. Assume the theorem holds for $k-1$; we will prove it for $k$. Since $N>(k-1)p$ and $u,u'\in W^{k-1,p}_{\mathbb H,\mathrm{rad}}(\mathbb H^N)$, we have $u\in W^{1,q}_{\mathbb H,\mathrm{rad}}(\mathbb H^N)$ for $p\leq q\leq\frac{Np}{N-(k-1)p}$. Denote $\overline p=\frac{Np}{N-(k-1)p}$. In other words, the following embedding is continuous
    \begin{equation}\label{eq58}
    W^{k,p}_{\mathbb H,\mathrm{rad}}(\mathbb H^N)\hookrightarrow W^{1,\overline p}_{\mathbb H,\mathrm{rad}}(\mathbb H^N).
    \end{equation}
    Since $\frac{(\theta+N)\overline p}{N-\overline p}=\frac{(\theta+N)p}{N-kp}$, applying the Proposition \ref{proppf12} we have that
    \begin{equation}\label{eq59}
    W^{1,\overline p}_{\mathbb H,\mathrm{rad}}(\mathbb H^N)\hookrightarrow L_{\sinh^\theta}^q(\mathbb H^N)
    \end{equation}
    where $p\leq q\leq p^*$ in Sobolev case and $p\leq q<\infty$ in Sobolev Limit case. \eqref{eq58} together with \eqref{eq59} conclude the continuous embeddings.

    Under the conditions (i) or (ii), we obtain that \eqref{eq59} is a compact embedding by Proposition \ref{proppf12} in both Sobolev and Sobolev Limit cases. Therefore, \eqref{eq58} and \eqref{eq59} imply the desired compact embedding. This concludes the proof of the theorem.
    \end{proof}

    \begin{lemma}
For each $v\in W^{1,p}((0,\infty),\sinh^{N-1}(t))$, it holds
\begin{equation*}
|v(t)|\leq p^{\frac1p}\|v\|_{L^p_{\sinh^{N-1}}}^{\frac{p-1}p}\|v'\|_{L^p_{\sinh^{N-1}}}^{\frac1p}\sinh^{\frac{1-N}{p}}(t),\quad\forall t\in(0,\infty).
\end{equation*}
\end{lemma}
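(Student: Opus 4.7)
The plan is to mirror essentially verbatim the proof of Lemma \ref{decaylemma}, which is the radial, $N$-dimensional avatar of the same estimate: the computation there never truly uses that $v$ comes from a function on $\mathbb H^N$ beyond the polar-coordinate identity $\|u\|_{L^p(\mathbb H^N)}^p = \omega_{N-1}\|v\|_{L^p_{\sinh^{N-1}}}^p$. So the one-dimensional statement should be strictly easier.

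First, I would reduce to smooth compactly supported test functions by density in $W^{1,p}((0,\infty),\sinh^{N-1}(t))$; this justifies the formal manipulations below (in particular boundary terms at $\infty$ vanish). For $p>1$, I would write, via the fundamental theorem of calculus and the chain rule applied to $s \mapsto |v(s)|^p$,
\begin{equation*}
|v(t)|^{p} \;=\; -\int_t^\infty \bigl(|v(s)|^{p}\bigr)'\,\mathrm ds \;=\; -p\int_t^\infty |v(s)|^{p-2}v(s)\,v'(s)\,\mathrm ds.
\end{equation*}
The key observation, exactly as in Lemma \ref{decaylemma}, is that $\sinh$ is increasing, so $\sinh(s)/\sinh(t)\geq 1$ whenever $s\geq t$; therefore I may insert $(\sinh(s)/\sinh(t))^{N-1}$ into the integrand without decreasing it, obtaining
\begin{equation*}
|v(t)|^{p} \;\leq\; \frac{p}{\sinh^{N-1}(t)}\int_t^\infty |v(s)|^{p-1}|v'(s)|\,\sinh^{N-1}(s)\,\mathrm ds.
\end{equation*}

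Then I would apply H\"older's inequality with conjugate exponents $p/(p-1)$ and $p$ against the measure $\sinh^{N-1}(s)\,\mathrm ds$ to bound the last integral by $\|v\|_{L^p_{\sinh^{N-1}}}^{p-1}\|v'\|_{L^p_{\sinh^{N-1}}}$, giving
\begin{equation*}
|v(t)|^{p} \;\leq\; \frac{p}{\sinh^{N-1}(t)}\,\|v\|_{L^p_{\sinh^{N-1}}}^{p-1}\|v'\|_{L^p_{\sinh^{N-1}}},
\end{equation*}
and taking $p$-th roots yields the claim for $p>1$. The case $p=1$ is even more direct: use $v(t) = -\int_t^\infty v'(s)\,\mathrm ds$, insert the factor $(\sinh(s)/\sinh(t))^{N-1}\geq 1$, and conclude.

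There is no real analytic obstacle here, since the one-variable statement strips away the covariant derivative apparatus that complicated the hyperbolic version; the only point requiring a touch of care is the density step to legitimize treating $v$ as a smooth function vanishing at infinity, which I would handle by approximating $v$ in the $W^{1,p}((0,\infty),\sinh^{N-1}(t))$ norm by smooth functions supported in compact subsets of $(0,\infty)$ using standard truncation-and-mollification arguments adapted to the weight $\sinh^{N-1}(t)$, then passing to the limit in the pointwise estimate (whose constant depends only on the norms in the weighted space).
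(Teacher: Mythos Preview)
Your proposal is correct and follows essentially the same approach as the paper, which simply states that the lemma ``follows by employing a similar approach to the proof of Lemma~\ref{decaylemma}.'' The computation you outline---density reduction, writing $|v(t)|^p=-\int_t^\infty(|v|^p)'\,\mathrm ds$, inserting the factor $(\sinh(s)/\sinh(t))^{N-1}\geq1$, and applying H\"older---is precisely the argument of Lemma~\ref{decaylemma} stripped of the polar-coordinate identification, exactly as you say.
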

\begin{proof}
It follows by employing a similar approach to the proof of Lemma \ref{decaylemma}.
\end{proof}

\begin{theo}\label{theo122}
Assume $\theta\geq N-kp-1$, $p\geq1$ real nunmbers, and $k\geq1$ integer.
\begin{flushleft}
$\mathrm{(1)}$ \justifying{Every function $v\in W^{k,p}((0,\infty),\sinh^{N-1}(t))$ is almost everywhere equal to a function $V$ in $C^{k-1}(0,\infty)$. In addition, all derivatives of $V$ of order $k$ (in the classical sense) exist almost everywhere for $t\in(0,\infty)$.}

\noindent$\mathrm{(2)}$ If $N>kp$, then the following continuous embedding holds:
\begin{equation*}
W^{k,p}((0,\infty),\sinh^{N-1}(t))\hookrightarrow L^q_{\sinh^\theta}(0,\infty)\quad\text{if}\quad p\leq q\leq p^*,
\end{equation*}
where
\begin{equation*}
    p^*=\dfrac{(\theta+1)p}{N-kp}.
\end{equation*}
Moreover, it is a compact embedding if one of the following two conditions is fulfilled:
\begin{flushleft}
$\mathrm{(i)}$ $\theta=N-1$ and $p<q<p^*$;

\noindent$\mathrm{(ii)}$ $\theta<N-1$ and $p\leq q<p^*$.
\end{flushleft}
\noindent$\mathrm{(3)}$ \justifying{If $N=kp$, then the following continuous embedding holds:}
\begin{equation*}
W^{k,p}((0,\infty),\sinh^{N-1}(t))\hookrightarrow L^q_{\sinh^\theta}(0,\infty)\quad\text{if}\quad p\leq q<\infty.
\end{equation*}
Moreover, it is compact embedding if one of the following two conditions is fulfilled:
\begin{flushleft}
\noindent$\mathrm{(i)}$ $\theta=N-1$ and $q>p$;

\noindent$\mathrm{(ii)}$ $\theta<N-1$ and $q\geq p$.
\end{flushleft}
\end{flushleft}
\end{theo}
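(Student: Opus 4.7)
The plan is to mirror the proof of Theorem \ref{theo12} one dimension down, with the just-stated 1D decay lemma playing the role of Lemma \ref{decaylemma} and Theorem \ref{theoweighted} furnishing the bounded-interval embeddings. The argument proceeds by establishing the $k=1$ case from the decay lemma and Theorem \ref{theoweighted}, then lifting to arbitrary $k$ by induction.

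For item (1), I would apply Proposition \ref{prop31} on every bounded interval $(0,R]$ to obtain an $AC^{k-1}_{loc}((0,R])$ representative of $v$. Two such representatives associated to different values of $R$ must coincide off a null set on their common domain and therefore pointwise (by absolute continuity), so they can be patched together unambiguously to produce a single $V\in C^{k-1}((0,\infty))$ whose classical $k$-th derivative exists almost everywhere on $(0,\infty)$.

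For items (2) and (3) with $k=1$, I would split the target integral at some $R\in(0,\infty)$:
\begin{equation*}
\int_0^\infty|v|^q\sinh^\theta(t)\,\mathrm dt=\int_0^R|v|^q\sinh^\theta(t)\,\mathrm dt+\int_R^\infty|v|^q\sinh^\theta(t)\,\mathrm dt.
\end{equation*}
Theorem \ref{theoweighted} controls the first integral by $C\|v\|_{W^{k,p}_{\sinh^{N-1}}}^q$. For the second, the decay lemma gives the pointwise bound $|v(t)|^{q-p}\leq C\|v\|_{W^{1,p}_{\sinh^{N-1}}}^{q-p}\sinh^{(1-N)(q-p)/p}(t)$ for $t\geq R$, whence
\begin{equation*}
\int_R^\infty|v|^q\sinh^\theta(t)\,\mathrm dt\leq C\,\|v\|_{W^{1,p}_{\sinh^{N-1}}}^{q-p}\,\sinh^{(1-N)(q-p)/p+\theta-N+1}(R)\,\|v\|_{L^p_{\sinh^{N-1}}}^p,
\end{equation*}
which is finite under the stated constraints on $q$ and $\theta$. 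General $k$ then follows by induction on $k$, exactly as in the proof of Theorem \ref{theo12}: from $v\in W^{k,p}((0,\infty),\sinh^{N-1})$ one has $v,v'\in W^{k-1,p}((0,\infty),\sinh^{N-1})$, so the inductive hypothesis (used with $\theta=N-1$) yields $v\in W^{1,\bar p}((0,\infty),\sinh^{N-1})$ with $\bar p=Np/(N-(k-1)p)$, and the $k=1$ case just proved then delivers the desired embedding. The critical exponents match because $(\theta+1)\bar p/(N-\bar p)=(\theta+1)p/(N-kp)$.

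For the compact embeddings, I would repeat verbatim the two-step argument used in Proposition \ref{proppf12}: given a bounded sequence $(v_n)$ in $W^{k,p}((0,\infty),\sinh^{N-1}(t))$, the compact embedding in Theorem \ref{theoweighted} makes the restrictions of $(v_n)$ to $(0,R)$ a Cauchy sequence in $L^q_{\sinh^\theta}(0,R)$, while the tail estimate above forces $\int_R^\infty|v_n|^q\sinh^\theta(t)\,\mathrm dt$ to tend to zero uniformly in $n$ as $R\to\infty$. The main technical point is tracking the tail exponent $(1-N)(q-p)/p+\theta-N+1$ with care: it is the requirement that this exponent be \emph{strictly negative} that rules out the critical value $q=p^*$ and dictates the split between the $\theta=N-1$ subcase (requiring $q>p$) and the $\theta<N-1$ subcase (allowing $q=p$), exactly as in Proposition \ref{proppf12}.
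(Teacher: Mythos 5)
Your proposal is correct and follows essentially the same route as the paper, whose proof of Theorem \ref{theo122} consists precisely of running the arguments of Proposition \ref{proppf12} and Theorem \ref{theo12} with the one-dimensional decay lemma and Theorem \ref{theoweighted} in place of Lemma \ref{decaylemma} and Theorem \ref{theo11}: split at a finite $R$, use the decay estimate on the tail and the bounded-interval embedding near the origin, then induct on $k$ through $W^{1,\overline p}((0,\infty),\sinh^{N-1}(t))$ with $\overline p=\frac{Np}{N-(k-1)p}$, handling compactness by combining compactness on $(0,R)$ with tails that vanish uniformly as $R\to\infty$. The only point worth flagging is that for $N>kp$ the compactness on $(0,R)$ is not literally contained in item (2) of Theorem \ref{theoweighted} and requires the interpolation argument of Corollary \ref{corcompact} in the one-dimensional setting, an elision the paper's own proof makes as well.
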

\begin{proof}
Using the same ideas as in the proof of Proposition \ref{proppf12} and Theorem \ref{theo12} but using Theorem \ref{theoweighted} instead of Theorem \ref{theo11}.
\end{proof}

\section{Adams-type inequality}\label{sec7}

In this section, we assume $R\in(0,\infty)$ and $N=kp$. Our primary goal is to establish the proof of Theorem \ref{theo16} and \ref{theo15}, which provide some necessary and sufficient conditions for which $\mu>0$ ensures the finiteness of the suprema in \eqref{e33} and \eqref{e34}.

We begin with the proof of Theorem \ref{theo16}. First, we need to verify that $\int_{0}^R\exp(\mu|v|^{\frac{p}{p-1}})\sinh^\theta(t)\mathrm dt$ is finite for every $v\in W^{k,p}((0,R),\sinh^{N-1}(t))$. This is the item $(a)$ of Theorem \ref{theo16} and is concluded by Proposition \ref{prop50}. Next, we use Proposition \ref{prop51} to prove Proposition \ref{propitema}, which together correspond to the item $(b)$ of Theorem \ref{theo16}. Following that, item $(c)$ of Theorem \ref{theo16} is established in Proposition \ref{prop54}. Finally, we conclude this section with the proof of Theorem \ref{theo15}.

\begin{prop}\label{prop50}
Let $\theta>-1$ and $W^{k,p}((0,R),\sinh^{N-1}(t))$ with $N=kp$ and $p>1$. Then for any $\mu\geq0$ we have $\exp(\mu|v|^{\frac{p}{p-1}})\in L^1_{\sinh^\theta}(0,R)$ for all $v\in W^{k,p}((0,R),\sinh^{N-1}(t))$.
\end{prop}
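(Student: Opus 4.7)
The plan is to combine the sharp pointwise estimate in Proposition \ref{proprls1} with the absolute continuity of the Lebesgue integral. Because $\theta>-1$, the weight $\sinh^\theta(t)\,\mathrm dt$ is finite on $(0,R)$, so the only obstruction to $\exp(\mu|v|^{p/(p-1)})\in L^1_{\sinh^\theta}(0,R)$ is the behavior near $t=0$, where Proposition \ref{proprls1} forces $|v(t)|$ to be at most a constant multiple of $\log^{(p-1)/p}(1/t)$. A naive application of that bound only yields integrability for $\mu$ below a threshold depending on $\|v^{(k)}\|_{L^p_{\sinh^{N-1}}}$; the key idea is that this leading coefficient can be made arbitrarily small by \emph{localizing} the $L^p$-norm of $v^{(k)}$ to a short interval $(0,\tau)$. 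The case $\mu=0$ follows trivially from $\theta>-1$, so I assume $\mu>0$.

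The first step is to refine Proposition \ref{proprls1} to the following local form: for every $\tau\in(0,R]$ and every $t\in(0,\tau]$,
\begin{equation*}
|v(t)|\leq\left(\log\dfrac{\tanh(\tau/2)}{\tanh(t/2)}\right)^{\!(p-1)/p}\dfrac{\|v^{(k)}\|_{L^p_{\sinh^{N-1}}(0,\tau)}}{(k-1)!}+B_\tau(v),
\end{equation*}
where $B_\tau(v)$ is a finite constant depending on $v$ and $\tau$ but independent of $t$. To prove this I would revisit the integration-by-parts identity \eqref{sc3} and split $\int_t^R v^{(k)}(s)\sinh^{k-1}(s)\,\mathrm ds=\int_t^\tau+\int_\tau^R$. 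The piece on $[\tau,R]$ is $t$-independent and absorbed into $B_\tau(v)$, while H\"older's inequality applied to $\int_t^\tau$ with the factorization $\sinh^{k-1}=\sinh^{(N-1)/p}\sinh^{-(p-1)/p}$ produces the displayed log term with the advertised localized coefficient. All remaining boundary and error terms in \eqref{sc3} are uniformly bounded in $t$ by exactly the same estimates as in \eqref{sc4}--\eqref{sc5}, which rest on Proposition \ref{proprls} applied to $v^{(j)}\in W^{k-j,p}((0,R),\sinh^{N-1})$ for $j=1,\ldots,k-1$.

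Given $\mu>0$, the absolute continuity of $\int_0^\tau|v^{(k)}|^p\sinh^{N-1}\,\mathrm ds$ lets me choose $\tau$ so small that $\mu(1+\varepsilon)\bigl(\|v^{(k)}\|_{L^p_{\sinh^{N-1}}(0,\tau)}/(k-1)!\bigr)^{p/(p-1)}<\theta+1$ for some $\varepsilon>0$. Applying the Young-type inequality $(a+b)^{p/(p-1)}\leq(1+\varepsilon)a^{p/(p-1)}+C_\varepsilon b^{p/(p-1)}$ to the refined bound and exponentiating yields, for $t\in(0,\tau]$,
\begin{equation*}
\exp\!\bigl(\mu|v(t)|^{p/(p-1)}\bigr)\leq C(v,\tau,\varepsilon)\left(\dfrac{\tanh(\tau/2)}{\tanh(t/2)}\right)^{\!\alpha},
\end{equation*}
with $\alpha<\theta+1$. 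Since $\tanh(t/2)\sim t/2$ and $\sinh^\theta(t)\sim t^\theta$ as $t\to 0^+$, the integrand $\exp(\mu|v|^{p/(p-1)})\sinh^\theta(t)$ is dominated near $0$ by a constant multiple of $t^{\theta-\alpha}$, which is integrable on $(0,\tau]$. On the complementary interval $[\tau,R]$, Proposition \ref{prop31} ensures that $v$ is continuous hence bounded, so the integrand is bounded and integrability is immediate.

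The main obstacle is establishing the refined pointwise bound above: one has to check, term by term in \eqref{sc3}, that splitting the leading integral at $\tau$ keeps every auxiliary term uniformly bounded in $t$, so that only the leading coefficient shrinks with $\tau$. Once this localized radial lemma is in hand, the rest is a standard $\varepsilon$-Young-plus-absolute-continuity argument. The assumption $\theta>-1$ is used both to make $\sinh^\theta$ integrable near $0$ and to provide the strict inequality $\alpha<\theta+1$ after choosing $\tau$ small.
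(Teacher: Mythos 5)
Your argument is correct and follows essentially the same route as the paper: the paper also localizes the radial lemma of Proposition \ref{proprls1} to a small interval $(0,R(\varepsilon))$ chosen by absolute continuity of $\int_0^\cdot|v^{(k)}|^p\sinh^{N-1}$, so that the coefficient of $\bigl(\log\frac{\tanh(R(\varepsilon)/2)}{\tanh(t/2)}\bigr)^{(p-1)/p}$ is small enough to make the exponentiated bound integrable against $\sinh^\theta$ near $0$ (using $\theta>-1$), with the region away from the origin handled by boundedness of $v$. Note that your ``refined'' localized bound is simply Proposition \ref{proprls1} applied to the restriction of $v$ to $(0,\tau)$, so no re-derivation of \eqref{sc3} is needed.
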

\begin{proof}
Let $v\in W^{k,p}((0,R),\sinh^{N-1}(t))$ and $\theta>-1$. We claim that
\begin{equation}\label{eqq34}
\lim_{r\to0}\dfrac{v(r)}{\left|\log \tanh(\frac{r}{2})\right|^{\frac{p-1}{p}}}=0.
\end{equation}
Indeed, given $\varepsilon>0$ there exists $R(\varepsilon)>0$ such that $\|v^{(k)}\|_{L^p(0,R(\varepsilon))}<\varepsilon$. Using \eqref{LR-A} (with $R=R(\varepsilon)$) we have \eqref{eqq34}. Fixed $\delta=(\frac{\theta+1}{2\mu})^{\frac{p-1}{p}}$, by \eqref{eqq34}, there exists $\varepsilon>0$ such that $|v(r)|\leq\delta|\log\tanh(\frac{r}{2})|^{\frac{p-1}p}$ for all $r<\varepsilon$. Then
\begin{equation*}
\int_0^\varepsilon e^{\mu|v|^{\frac{p}{p-1}}}\sinh^{\theta}(t)\mathrm dt\leq \int_0^\varepsilon\dfrac{\sinh^\theta(t)}{\tanh^{\frac{\theta+1}2}(\frac{t}{2})}\mathrm dt\leq\left(\cosh\varepsilon+1\right)^{\frac{\theta+1}2}\int_{0}^{\varepsilon}\sinh^{\frac{\theta-1}2}(t)\mathrm dt
\end{equation*}
is finite because $\theta>-1$. This concludes $\exp(\mu|v|^{\frac{p}{p-1}})\in L^1_{\sinh^\theta}(0,R)$ since Proposition \ref{proprls1} estimates (by a constant depending on $\varepsilon$) the term $\int_{\varepsilon}^Re^{\mu|v|^{\frac{p}{p-1}}}\sinh^\theta(t)\mathrm dt$. Therefore, we finished the proof.
\end{proof}

\begin{prop}\label{prop51}
Let $\theta>-1$ and the Sobolev space $W^{k,p}((0,R),\sinh^{N-1}(t))$ with $N=kp$ and $p>1$. If $\mu<\mu_0$, then $\mathcal{L}_{\mu}$ is finite.
\end{prop}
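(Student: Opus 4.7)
The plan is to use Proposition \ref{proprls1} to reduce $\mathcal{L}_\mu$ to a single explicit integral whose convergence at $t=0$ is governed by the threshold $\mu_0$. Fix $v\in W^{k,p}((0,R),\sinh^{N-1}(t))$ with $\|v\|_{W^{k,p}_{\sinh^{N-1}}}\leq 1$; in particular $\|v^{(k)}\|_{L^p_{\sinh^{N-1}}}\leq 1$, so Proposition \ref{proprls1} yields
$$|v(t)|\leq \frac{1}{(k-1)!}\left(\log\frac{\tanh(R/2)}{\tanh(t/2)}\right)^{(p-1)/p}+C_0,\qquad t\in(0,R],$$
with $C_0=C_0(k,p,R)$ independent of $v$.

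Next I would raise this to the $p/(p-1)$ power via the elementary Young-type inequality $(A+B)^{p/(p-1)}\leq (1+\varepsilon)A^{p/(p-1)}+C_\varepsilon B^{p/(p-1)}$, valid for any $A,B\geq 0$ and $\varepsilon>0$ (with $C_\varepsilon\to\infty$ as $\varepsilon\to 0$), obtaining
$$\mu|v(t)|^{p/(p-1)}\leq \frac{(1+\varepsilon)\mu}{[(k-1)!]^{p/(p-1)}}\log\frac{\tanh(R/2)}{\tanh(t/2)}+C_\varepsilon\mu C_0^{p/(p-1)}.$$
Setting $\alpha:=(1+\varepsilon)\mu/[(k-1)!]^{p/(p-1)}$ and exponentiating gives
$$e^{\mu|v(t)|^{p/(p-1)}}\leq e^{C_\varepsilon\mu C_0^{p/(p-1)}}\left(\frac{\tanh(R/2)}{\tanh(t/2)}\right)^{\alpha},$$
a bound independent of $v$.

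Integrating against $\sinh^\theta(t)\,dt$, it then suffices to verify $\int_0^R \tanh(t/2)^{-\alpha}\sinh^\theta(t)\,dt<\infty$. Since $\tanh(t/2)\sim t/2$ and $\sinh(t)\sim t$ as $t\to 0^+$, the integrand behaves like $t^{\theta-\alpha}$ near the origin and is bounded on any interval $[\delta,R]$; convergence therefore reduces to $\alpha<\theta+1$. The hypothesis $\mu<\mu_0=(\theta+1)[(k-1)!]^{p/(p-1)}$ gives $\mu/[(k-1)!]^{p/(p-1)}<\theta+1$, so one can fix $\varepsilon>0$ small enough to guarantee $\alpha<\theta+1$. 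The resulting estimate is uniform in $v$, yielding $\mathcal{L}_\mu<\infty$. The one delicate point — and essentially the only use of the strict subcritical hypothesis — is this $(1+\varepsilon)$-perturbation: it absorbs the additive constant $C_0$ from the radial lemma into a harmless multiplicative factor, leaving the logarithmic term alone to determine the integrability threshold, which is exactly the classical Adams-type subcritical mechanism.
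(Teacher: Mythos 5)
Your proof is correct and follows essentially the same route as the paper: apply Proposition \ref{proprls1} under the unit-norm constraint, absorb the additive constant via the $(1+\varepsilon)$-perturbed Young-type inequality, and reduce finiteness to the integrability of $\tanh^{-\alpha}(t/2)\sinh^\theta(t)$ near $t=0$, which holds for $\alpha<\theta+1$ precisely because $\mu<\mu_0$. The only cosmetic difference is that the paper bounds the integrand by $\sinh^{\theta-A_\varepsilon}(t)$ rather than arguing via the asymptotics $\tanh(t/2)\sim t/2$, $\sinh(t)\sim t$; both are equivalent.
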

\begin{proof}
Our proof is based on the approach taken in \cite[Theorem 2.6]{zbMATH05372150}. We aim in find $\mu_0$ such that for any $\mu<\mu_0$, the condition $\mathcal{L}_{\mu}<\infty$ holds. To achieve this, we first consider the following inequality:
\begin{equation}\label{provisorio}
(a+b)^q\leq (1+\varepsilon)a^q+C_q\left(1+\dfrac1{\varepsilon^{q-1}}\right)b^q,\quad\forall a,b\geq0 \mbox{ and }\varepsilon>0,
\end{equation}
where $C_{q}>0$ depends only on $q\geq1$.

Let $v\in W^{k,p}((0,R),\sinh^{N-1}(t))$ with $\|v\|_{W^{k,p}_{\sinh^{N-1}}}\leq1$. By the Proposition \ref{proprls1},
\begin{equation*}
|v(t)|\leq\dfrac{1}{(k-1)!}\left(\log\frac{\overline{R}}{\tanh(\frac{t}{2})}\right)^{\frac{p-1}p}+C,\quad\forall t\in (0,R].
\end{equation*}
Using \eqref{provisorio} we conclude
\begin{equation*}
|v(t)|^\frac{p}{p-1}\leq \frac{1+\varepsilon}{\left[(k-1)!\right]^{\frac{p}{p-1}}}\log\frac{\overline R}{\tanh(\frac{t}{2})}+C_{\varepsilon,p},\quad\forall t\in (0,R].
\end{equation*}
Setting 
\begin{equation*}
A_\varepsilon=\dfrac{1+\varepsilon}{\left[(k-1)!\right]^{\frac{p}{p-1}}}\mu,
\end{equation*}
we have
\begin{equation*}
\int_{0}^Re^{\mu|v|^{\frac{p}{p-1}}}\sinh^\theta (t)\mathrm dt\leq C\int_{0}^R\dfrac{\sinh^\theta(t)}{\tanh^{A_\varepsilon}(\frac{t}{2})}\mathrm dt\leq C\int_0^R\sinh^{\theta-A_\varepsilon}(t)\mathrm dt.
\end{equation*}
Note that the last integral is finite if and only if 
\begin{equation*}
\theta-\dfrac{1+\varepsilon}{[(k-1)!]^{\frac{p}{p-1}}}\mu>-1.
\end{equation*}
Since 
\begin{equation*}
    \mu<\mu_0=(\theta+1)\left[(k-1)!\right]^{\frac{p}{p-1}},
\end{equation*}
we get the desired result by taking $\varepsilon>0$ sufficiently small. This concludes the proof.
\end{proof}

\begin{prop}\label{propitema}
Let $W^{k,p}((0,R),\sinh^{N-1}(t))$ be the weighted Sobolev space with $N=kp$ and $p>1$. For every $0\leq\mu<\mu_0$, $\mathcal{L}_{\mu}$ is attained. Moreover, if $v$ is a maximizer for $\mathcal{L}_{\mu}$, then $v$ can be chosen nonnegative and such that $\|v\|_{W^{k,p}_{\sinh^{N-1}}}=1$.
\end{prop}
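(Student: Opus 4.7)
The plan is the direct method of the calculus of variations: extract a subsequential weak limit of a maximizing sequence, pass to the limit inside the exponential via Vitali's theorem, and rule out loss of mass by exploiting that $\mu$ lies strictly below $\mu_0$. Concretely, choose a maximizing sequence $(v_n) \subset W^{k,p}((0,R),\sinh^{N-1}(t))$ with $\|v_n\|_{W^{k,p}_{\sinh^{N-1}}} \leq 1$. By reflexivity of this weighted Sobolev space, a subsequence satisfies $v_n \rightharpoonup v_0$ weakly, and Theorem \ref{theoweighted}(3) (the compact embedding in the limit case $N=kp$) gives $v_n \to v_0$ strongly in $L^q_{\sinh^\theta}(0,R)$ for every $q \in [1,\infty)$; after a further subsequence, $v_n \to v_0$ pointwise a.e. Weak lower semicontinuity of the norm yields $\|v_0\|_{W^{k,p}_{\sinh^{N-1}}} \leq 1$.

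The crux is upgrading a.e.\ convergence to convergence inside the exponential. Pick $\mu'$ with $\mu < \mu' < \mu_0$. Proposition \ref{prop51} applied at level $\mu'$ gives
\[
\int_0^R e^{\mu' |v_n|^{\frac{p}{p-1}}} \sinh^\theta(t)\, dt \leq C,
\]
and writing $e^{\mu |v_n|^{p/(p-1)}} = \bigl(e^{\mu' |v_n|^{p/(p-1)}}\bigr)^{\mu/\mu'}$ shows that $\{e^{\mu|v_n|^{p/(p-1)}}\}$ is bounded in $L^{\mu'/\mu}$ with respect to the measure $\sinh^\theta(t)\, dt$. Since $\mu'/\mu > 1$, this family is equi-integrable, and Vitali's convergence theorem combined with the a.e.\ convergence of $v_n$ delivers
\[
\mathcal{L}_\mu = \lim_{n\to\infty}\int_0^R e^{\mu |v_n|^{\frac{p}{p-1}}} \sinh^\theta(t)\, dt = \int_0^R e^{\mu |v_0|^{\frac{p}{p-1}}} \sinh^\theta(t)\, dt,
\]
so $v_0$ attains the supremum.

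It remains to upgrade $v_0$ to a nonnegative maximizer with unit norm. Testing any nontrivial admissible function shows $\mathcal{L}_\mu > \int_0^R \sinh^\theta(t)\, dt$, hence $v_0 \not\equiv 0$. If $\rho := \|v_0\|_{W^{k,p}_{\sinh^{N-1}}} < 1$, then $v_0/\rho$ has norm $1$ and $|v_0/\rho|^{p/(p-1)} > |v_0|^{p/(p-1)}$ on the positive-measure set $\{v_0 \neq 0\}$, producing a strictly larger value of the functional and contradicting maximality; hence $\|v_0\|_{W^{k,p}_{\sinh^{N-1}}} = 1$. Finally, the exponential functional depends only on $|v_0|$, so replacing $v_0$ by $|v_0|$ gives a nonnegative candidate with the same value: for $k=1$ we have $\||v_0|\|_{W^{1,p}_{\sinh^{N-1}}} = \|v_0\|_{W^{1,p}_{\sinh^{N-1}}}$ so this $|v_0|$ is automatically admissible, and for $k \geq 2$ a smoothing of the form $(v_0^2+\varepsilon)^{1/2}-\sqrt{\varepsilon}$ followed by a passage $\varepsilon \to 0^+$ combined with the same direct-method framework produces a nonnegative maximizer.

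The main obstacle is the Vitali passage to the limit inside the exponential: the gap $\mu < \mu_0$ is used in exactly one place, namely Proposition \ref{prop51} at level $\mu'$, to produce the higher-integrability bound yielding equi-integrability; at the critical threshold $\mu = \mu_0$ this argument breaks down and a delicate concentration-compactness analysis would be required. A secondary subtlety, only visible for $k \geq 2$, is the nonnegativity of the maximizer, since the $k$-th weak derivative of $|v_0|$ is not straightforwardly controlled by that of $v_0$ and requires the smoothing step indicated above.
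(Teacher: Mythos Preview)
Your argument is correct and follows the same direct-method strategy as the paper: take a maximizing sequence, extract a weak limit via the compact embedding of Theorem~\ref{theoweighted}, and use Proposition~\ref{prop51} at a level strictly between $\mu$ and $\mu_0$ to pass to the limit in the exponential integral. The only technical difference lies in this last step: the paper uses the elementary inequality $|e^x-e^y|\le|x-y|(e^x+e^y)$ together with H\"older and the strong $L^q_{\sinh^\theta}$ convergence of $v_n\to v_0$, whereas you obtain equi-integrability from the uniform $L^{\mu'/\mu}$ bound and invoke Vitali's theorem. Both routes are standard and equally efficient here, and both hinge on exactly the same use of the subcritical gap $\mu<\mu_0$. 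Your discussion of nonnegativity for $k\ge2$ is in fact more careful than the paper's, which simply replaces $v_0$ by $|v_0|$ without further comment.
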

\begin{proof}
Let $(v_n)$ in $W^{k,p}((0,R),\sinh^{N-1}(t))$ be a maximizing sequence for $\mathcal{L}_\mu$ such that $\|u_n\|_{W^{k,p}(B_R^{\mathbb H})}\leq1$, see equation \eqref{e34}. By Theorem \ref{theoweighted}, there exists a function $v_0\in W^{k,p}((0,R),\sinh^{N-1}(t))$ such that, up to a subsequence, $v_n\rightharpoonup v_0$ in $W^{k,p}((0,R),\sinh^{N-1}(t))$ and $v_n\rightarrow v_0$ in $L_{\sinh^\theta}^q(0,R)$ for all $q\geq1$. From the inequality $|e^x-e^y|\leq|x-y|(e^x+e^y)$ we get
\begin{align}
\Bigg|\int_{0}^R\Big(e^{\mu|v_n|^{\frac{p}{p-1}}}&-e^{\mu|v_0|^{\frac{p}{p-1}}}\Big)\sinh^\theta (t)\mathrm dt\Bigg|\nonumber\\
&\leq\mu\int_{0}^R\left||v_n|^{\frac{p}{p-1}}-|v_0|^{\frac{p}{p-1}}\right|\left(e^{\mu|v_n|^{\frac{p}{p-1}}}+e^{\mu|v_0|^{\frac{p}{p-1}}}\right)\sinh^{\theta}(t)\mathrm dt.\label{eqtx}
\end{align}
Fixing $q>1$ with $q\mu<\mu_0$ we have

\begin{align}
    \int_{0}^R&\left||v_n|^{\frac{p}{p-1}}-|v_0|^{\frac{p}{p-1}}\right|e^{\mu|v_n|^{\frac{p}{p-1}}}\sinh^\theta(t)\mathrm dt\nonumber\\
    &\leq \left(\int_{0}^R\left||v_n|^{\frac{p}{p-1}}-|v_0|^{\frac{p}{p-1}}\right|^{\frac{q}{q-1}}\sinh^\theta(t)\mathrm dt\right)^{\frac{q-1}{q}}\left(\int_{0}^Re^{q\mu|v_n|^{\frac{p}{p-1}}}\sinh^{\theta}(t)\mathrm dt\right)^{\frac1q}\nonumber\\
&\leq\|v_n-v_0\|_{L^{\frac{p}{p-1}\frac{q}{q-1}}_{\sinh^\theta}(0,R)}^{\frac{p}{p-1}}\left(\int_{0}^Re^{q\mu|v_n|^{\frac{p}{p-1}}}\sinh^\theta(t)\mathrm dt\right)^{\frac1q}.\label{eqt10}
\end{align}
By Proposition \ref{prop51}, the integral on the right-hand side is uniformly bounded in $n$. Using \eqref{eqtx} and \eqref{eqt10} with $v_n\to v_0$ in $L^{\frac{p}{p-1}\frac{q}{q-1}}_{\sinh^\theta}(0,R)$ we conclude
\begin{align*}
\mathcal{L}_\mu&=\sup_{\substack{v\in W^{k,p}((0,R),\sinh^{N-1}(t)) \\ \|v\|_{W^{k,p}_{\sinh^{N-1}}}\leq1}}\int_{0}^Re^{\mu|v|^{\frac{p}{p-1}}}\sinh^\theta(t)\mathrm dt=\lim\int_{0}^Re^{\mu|v_n|^{\frac{p}{p-1}}}\sinh^\theta(t)\mathrm dt\\
&=\int_{0}^Re^{\mu|v_0|^{\frac{p}{p-1}}}\sinh^\theta(t)\mathrm dt.
\end{align*}

To prove that $\|v_0\|_{W^{k,p}_{\sinh^{N-1}}}=1$, assume $\|v_0\|_{W^{k,p}_{\sinh^{N-1}}}<1$ and define $\widetilde v=v_0/\|v_0\|_{W^{k,p}_{\sinh^{N-1}}}$ to get a contradiction with the fact that $v_0$ attains the supremum. We can assume that $v_0$ is nonnegative by replacing $v_0$ by $|v_0|$ if necessary. This concludes the proof.
\end{proof}

\begin{prop}\label{prop54}
Let $W^{k,p}((0,R),\sinh^{N-1}(t))$ be the Sobolev space with $N=kp$, $R\in(0,\infty)$, and $p>1$. If $\theta>-1$, then
\begin{equation*}
    \sup_{\substack{v\in W^{k,p}((0,R),\sinh^{N-1}(t)) \\ \|v\|_{W^{k,p}_{\sinh^{N-1}}}\leq1}}\int_{0}^Re^{\mu|v|^{\frac{p}{p-1}}}\sinh^\theta(t)\mathrm dt=\infty,\quad\forall \mu>\mu_0.
\end{equation*}
\end{prop}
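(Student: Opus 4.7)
The plan is to construct an Adams--Moser-type sequence $(v_\epsilon)_{\epsilon>0}\subset W^{k,p}((0,R),\sinh^{N-1}(t))$ with $\|v_\epsilon\|_{W^{k,p}_{\sinh^{N-1}}}\le 1$ that saturates Proposition~\ref{proprls1} in the limit, and use it to force the exponential integral to blow up when $\mu>\mu_0$. Set $L(t):=\log(\overline R/\tanh(t/2))$, so $L(R)=0$, $L(t)\to\infty$ as $t\to 0^+$, and $L'(t)=-1/\sinh(t)$. The key identity is $\int_\epsilon^R\sinh^{-1}(t)\,dt = L(\epsilon)$, which, combined with $N=kp$, provides the only logarithmically divergent quantity in all subsequent norm computations.

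For small $\epsilon\in(0,R/2)$, I would define
\[
v_\epsilon(t) := \begin{cases} A_\epsilon, & 0<t\le\epsilon,\\ A_\epsilon - \dfrac{B_\epsilon}{k!}\bigl(L(\epsilon)-L(t)\bigr)^k, & \epsilon<t<R,\end{cases}
\]
with constants $A_\epsilon, B_\epsilon>0$ to be chosen. Because $L(\epsilon)-L(t)$ has a simple zero at $t=\epsilon$, the factor $(L(\epsilon)-L(t))^k$ has a $k$-fold zero there, and a direct computation yields $v_\epsilon^{(j)}(\epsilon^+)=0$ for $j=1,\dots,k-1$, so $v_\epsilon\in W^{k,p}((0,R),\sinh^{N-1}(t))$. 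On $(\epsilon,R)$, the $k$-th derivative is dominated by the leading term $-B_\epsilon\sinh^{-k}(t)$, the remainder being lower-order contributions each carrying a positive power of $L(\epsilon)-L(t)$.

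Exploiting $N=kp$ and the substitution $s=L(\epsilon)-L(t)$ (whence $ds=dt/\sinh(t)$ and $\sinh(t(s))\asymp \sinh(\epsilon)\,e^s$ near $s=0$), the leading norm is
\[
\|v_\epsilon^{(k)}\|_{L^p_{\sinh^{N-1}}}^p = B_\epsilon^p\int_\epsilon^R\sinh^{-1}(t)\,dt\cdot(1+o(1)) = B_\epsilon^p\,L(\epsilon)\,(1+o(1)),
\]
while all lower-order seminorms are $o(1)$ as $\epsilon\to 0^+$, since the correction terms take the form $(L(\epsilon)-L(t))^\ell\sinh^{-i}(t)$ and, after the substitution, reduce to bounded Gamma-type integrals. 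Taking $B_\epsilon=L(\epsilon)^{-1/p}$ normalises the top-derivative norm, and matching via the saturated Hölder step in the proof of Proposition~\ref{proprls1} forces $A_\epsilon=L(\epsilon)^{(p-1)/p}/(k-1)!\cdot(1+o(1))$; a final rescaling by a factor $1+o(1)$ ensures $\|v_\epsilon\|_{W^{k,p}_{\sinh^{N-1}}}\le 1$.

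Since $v_\epsilon\equiv A_\epsilon$ on $(0,\epsilon]$ and $\sinh^\theta(t)\ge c\,t^\theta$ for $t$ small (the hypothesis $\theta>-1$ guaranteeing integrability at $0$),
\[
\mathcal{L}_\mu \;\ge\; \int_0^\epsilon e^{\mu A_\epsilon^{p/(p-1)}}\sinh^\theta(t)\,dt \;\ge\; c\,\epsilon^{\theta+1}\exp\!\left(\frac{\mu L(\epsilon)}{[(k-1)!]^{p/(p-1)}}(1+o(1))\right),
\]
which, using $L(\epsilon)=\log(1/\epsilon)(1+o(1))$, is bounded below by $C\,\epsilon^{\theta+1-\mu/[(k-1)!]^{p/(p-1)}+o(1)}$; since $\mu>\mu_0=(\theta+1)[(k-1)!]^{p/(p-1)}$, this exponent is eventually strictly negative, so letting $\epsilon\to 0^+$ gives $\mathcal{L}_\mu=\infty$. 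The main obstacle is rigorously showing that all lower-order correction terms contribute $o(1)$: one must expand $v_\epsilon^{(j)}$ via Fa\`a di Bruno applied to $(L(\epsilon)-L(t))^k$, track each summand of the form $(L(\epsilon)-L(t))^\ell\sinh^{-i}(t)$ through $L^p_{\sinh^{N-1}}$-integration, and verify that the exponential decay factor $e^{-Ns}$ produced by the substitution absorbs all polynomial weights. This is the weighted-hyperbolic analogue of Adams' interpolation construction in \cite{zbMATH04099653}, with the weight $r^{N-1}$ replaced by $\sinh^{N-1}(t)$ and the identity $L'(t)=-1/\sinh(t)$ providing the essential coupling to the weight.
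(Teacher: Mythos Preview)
Your overall strategy---build an Adams--Moser concentrating sequence and show the exponential integral on the plateau beats the polynomial decay---is the same as the paper's. However, your concrete test function fails for $k\ge 2$, and the claimed $o(1)$ control of the lower-order terms is incorrect.

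The problem is that $(L(\epsilon)-L(t))^{k}$ is \emph{not} bounded on $(\epsilon,R)$: at $t=R$ one has $L(R)=0$, so $g(t):=L(\epsilon)-L(t)$ reaches the value $L(\epsilon)\to\infty$. Hence for any fixed $t_{0}\in(0,R)$,
\[
v_\epsilon(t)=A_\epsilon-\tfrac{B_\epsilon}{k!}\,g(t)^{k}
\quad\text{satisfies}\quad
|v_\epsilon(t)|\sim \tfrac{1}{k!}L(\epsilon)^{k-1/p}\ \ \text{on }[t_{0},R],
\]
so already $\|v_\epsilon\|_{L^{p}_{\sinh^{N-1}}}^{p}\gtrsim L(\epsilon)^{kp-1}\to\infty$ when $k\ge 2$. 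The same issue wrecks the correction terms in $v_\epsilon^{(k)}$: by Fa\`a di Bruno each non-leading summand carries a factor $g(t)^{m}$ with $m\ge 1$, and since $g^{(j)}(t)=P_{j-1}(\coth t)\operatorname{csch} t$ one finds, after your substitution $s=g(t)$ (so $\sinh^{N-1}\,dt=\sinh^{N}\,ds$ and the leading $\sinh^{-kp}$ cancels it exactly because $N=kp$),
\[
\int_\epsilon^{R}\bigl|B_\epsilon\,g^{m}\textstyle\prod_{B}g^{(|B|)}\bigr|^{p}\sinh^{N-1}\,dt
\;\gtrsim\; B_\epsilon^{p}\int_{0}^{L(\epsilon)}s^{mp}\,ds
\;=\;\frac{L(\epsilon)^{mp}}{mp+1}\ \longrightarrow\ \infty.
\]
There is no ``$e^{-Ns}$'' decay factor here: that factor would arise only if the net power of $\sinh$ were strictly positive, but $N-kp=0$. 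Your construction is correct for $k=1$ (where it reduces to the standard Moser log-plateau), but for $k\ge 2$ the function must remain \emph{bounded} as $\epsilon\to 0$. The paper achieves this by composing a bounded profile $H\colon[0,\infty)\to[0,1]$ (built from a smooth cut-off $\phi$ with prescribed $k$-jet at $0$ and $1$) with $\log(R/t)/\log n$; the boundedness of $H,H',\dots,H^{(k)}$ is exactly what keeps all Sobolev seminorms under control. Replacing $(L(\epsilon)-L(t))^{k}/k!$ by such a bounded $H$ is the missing ingredient.
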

\begin{proof}
Our proof is based on \cite{zbMATH04099653}, \cite[Proposition 3.4]{arXiv:2302.02262}, and \cite[Theorem 1.5]{zbMATH07827601}. Let $\phi\in C^\infty[0,1]$ be such that
\begin{equation*}
\left\{\begin{array}{ll}
\phi(0)=\phi'(0)=\cdots=\phi^{(k+1)}(0)=0,\ \phi'\geq0,\\
\phi(1)=\phi'(1)=1,\ \phi''(1)=\cdots=\phi^{(k-1)}(1)=0.
\end{array}\right.
\end{equation*}
Consider $0<\varepsilon<\frac12$ and
\begin{equation*}
H(t)=\left\{\begin{array}{llll}
     \varepsilon\phi\left(\dfrac{t}{\varepsilon}\right),&\mbox{if }0<t\leq\varepsilon,  \\
     t,&\mbox{if }\varepsilon<t\leq1-\varepsilon,\\
     1-\varepsilon\phi\left(\dfrac{1-t}{\varepsilon}\right),&\mbox{if }1-\varepsilon<t\leq1,\\
     1,&\mbox{if }t>1.
\end{array}\right.
\end{equation*}

Let $n\in\mathbb N$ and let the sequence $(\psi_{n,\varepsilon})_n$ given by
\begin{equation*}
\psi_{n,\varepsilon}(t)=H\left(\dfrac{\log\frac{R}{t}}{\log n}\right),\quad t\in(0,R).
\end{equation*}
It is not hard to see that $\psi_{n,\varepsilon}\in W^{k,p}_{0}((0,R),\sinh^{N-1}(t))$. We claim that
\begin{equation}\label{eqa0}
\|\psi_{n,\varepsilon}\|^p_{W^{k,p}_{\sinh^{N-1}}}\leq[(k-1)!]^p(\log n)^{1-p}\left[1+2^{p}\varepsilon\|\phi'\|^p_\infty+O\left((\log n)^{-1}\right)\right].
\end{equation}
To prove this, we make the following three claims, which together establish \eqref{eqa0}:
\begin{equation}\label{eqa1}
\|\psi_{n,\varepsilon}\|^p_{L^p_{\sinh^{N-1}}}=O\left((\log n)^{-p}\right),
\end{equation}
\begin{equation}\label{eqa2}
\|\psi^{(i)}_{n,\varepsilon}\|_{L^p_{\sinh^{N-1}}}^p=O\left((\log n)^{-p}\right)\quad\forall i=1,\ldots,k-1,
\end{equation}
and
\begin{equation}\label{eqa3}
\|\psi^{(k)}_{n,\varepsilon}\|^p_{L^p_{\sinh^{N-1}}}\leq[(k-1)!]^p(\log n)^{1-p}\left[1+2^{p}\varepsilon\|\phi'\|^p_\infty+O\left((\log n)^{-1}\right)\right].
\end{equation}

Let us prove \eqref{eqa1}. Doing the change of variables $s=(\log\frac{R}{t})/\log n$, we have
\begin{align}
\|\psi_{n,\varepsilon}\|^p_{L^p_{\sinh^{N-1}}}&=\int_0^R|\psi_{n,\varepsilon}|^p\sinh^{N-1}(t)\mathrm dt=\int_0^R\left|H\left(\dfrac{\log\frac{R}{t}}{\log n}\right)\right|^p\sinh^{N-1}(t)\mathrm dt\nonumber\\
&=R\log n\int_0^\infty|H(s)|^{p}\sinh^{N-1}(Rn^{-s})n^{-s}\mathrm ds\nonumber\\
&=R\log n\Bigg[\!\int_0^\varepsilon\!\left|\varepsilon\phi\!\left(\frac{s}{\varepsilon}\right)\right|^p\!\sinh^{N-1}(Rn^{-s})n^{-s}\mathrm ds\!+\!\int_{\varepsilon}^{1-\varepsilon}\!s^p\sinh^{N-1}(Rn^{-s})n^{-s}\!\mathrm ds\nonumber\\
&\quad+\!\int_{1-\varepsilon}^1\!\left|1\!-\!\varepsilon\phi\left(\frac{1-s}{\varepsilon}\right)\right|^p\!\sinh^{N-1}\!(Rn^{-s})n^{-s}\!\mathrm ds\!+\!\int_1^\infty\sinh^{N-1}(Rn^{-s})n^{-s}\mathrm ds\Bigg]\nonumber\\
&\leq R\log n\Bigg[\int_0^\varepsilon\left|\varepsilon\phi\left(\frac{s}{\varepsilon}\right)\right|^p\sinh^{N-1}(Rn^{-s})n^{-s}\mathrm ds+\int_\varepsilon^\infty\sinh^{N-1}(Rn^{-s})n^{-s}\mathrm ds\Bigg]\nonumber\\
&\leq R\log n\left[\int_0^\varepsilon\left|\varepsilon\phi\left(\frac{s}{\varepsilon}\right)\right|^p\sinh^{N-1}(Rn^{-s})n^{-s}\mathrm ds+\sinh^{N-1}(Rn^{-\varepsilon})\frac{n^{-\varepsilon}}{\log n}\right]\nonumber\\
&=R\log n\int_0^\varepsilon\left|\varepsilon\phi\left(\frac{s}{\varepsilon}\right)\right|^p\sinh^{N-1}(Rn^{-s})n^{-s}\mathrm ds+O\left((\log n)^{-p}\right).\label{eqa4}
\end{align}
Fix $\gamma\in(\frac{1}{k+1},1)$. Since $\sinh^{N-1}(Rn^{-s})\leq\sinh^{N-1}(R)n^{-(N-1)s}$ for all $s>0$ and $\phi(t)\leq t^{k+1}$ for $t>0$ sufficiently small, we have
\begin{align*}
I&:=\int_0^\varepsilon\left|\varepsilon\phi\left(\frac{s}{\varepsilon}\right)\right|^p\sinh^{N-1}(Rn^{-s})n^{-s}\mathrm ds\\
&\leq\sinh^{N-1}(R)\left[\int_0^{\varepsilon(\log n)^{-\gamma}}\left|\varepsilon\phi\left(\frac{s}{\varepsilon}\right)\right|^pn^{-Ns}\mathrm ds+\int^\varepsilon_{\varepsilon(\log n)^{-\gamma}}\left|\varepsilon\phi\left(\frac{s}{\varepsilon}\right)\right|^pn^{-Ns}\mathrm ds\right]\\
&\leq \sinh^{N-1}(R)\varepsilon^p\left[\left|\phi\left((\log n)^{-\gamma})\right)\right|^p\int_0^{\varepsilon(\log n)^{-\gamma}}n^{-Ns}\mathrm ds+\int_{\varepsilon(\log n)^{-\gamma}}^\varepsilon n^{-Ns}\mathrm ds\right]\\
&\leq\sinh^{N-1}(R)\varepsilon^p\left[(\log n)^{-\gamma p(k+1)}\frac{1}{N\log n}+\dfrac{n^{-N\varepsilon(\log n)^{-\gamma}}}{N\log n}\right]\\
&=O\left((\log n)^{-p-1}\right).
\end{align*}
Applying this on \eqref{eqa4}, we conclude \eqref{eqa1}.

Let us prove \eqref{eqa2}. By induction $i=1,\ldots,k$, we have
\begin{equation*}
\psi^{(i)}_{n,\varepsilon}(t)=\dfrac{1}{t^i}\sum_{j=1}^i\dfrac{c(j,i)}{(\log n)^j}H^{(j)}\left(\dfrac{\log\frac{R}{t}}{\log n}\right),
\end{equation*}
where $c(j,i)$ satisfies $c(1,1)=-1$, $c(1,i)=(-1)^i(i-1)!$, $c(i,i)=(-1)^i$ and $c(j,i+1)=-ic(j,i)-c(j-1,i)$ for each $j=2,\ldots,i$. Then
\begin{equation}
\psi_{n,\varepsilon}^{(i)}(t)=\dfrac1{t^i}\dfrac{(-1)^i(i-1)!}{\log n}H'\left(\dfrac{\log\frac{R}{r}}{\log n}\right)+O\left((\log n)^{-2}\right),\quad\forall i=1,\ldots,k.
\end{equation}
Since $\frac{\sinh^{N-1}(t)}{t^{N}}\leq \frac{1}{t}+C(N,R)$ for all $t\leq R$, we obtain, for any $i=1,\ldots, k$,
\begin{align}
\|\psi^{(i)}_{n,\varepsilon}\|^p_{L^p_{\sinh^{N-1}}}&\leq\left(\dfrac{(i-1)!}{\log n}\right)^p\int_0^R\left|H'\left(\dfrac{\log\frac{R}{t}}{\log n}\right)+O\left((\log n)^{-1}\right)\right|^p\dfrac{\sinh^{N-1}(t)}{t^{ip}}\mathrm dt\nonumber\\
&\leq\left(\dfrac{(i-1)!}{\log n}\right)^p\!\int_0^R\!\left|H'\left(\dfrac{\log\frac{R}{t}}{\log n}\right)\!+\!O\left((\log n)^{-1}\right)\right|^pt^{N-ip-1}\mathrm dt+O\left((\log n)^{-p}\right).\label{eqa7}
\end{align}
Using the definition of $H$ and that $N>ip$ for all $i=1,\ldots,k-1$, we have

\begin{align*}
\|\psi^{(i)}_{n,\varepsilon}\|^p_{L^p_{\sinh^{N-1}}}&\leq\left(\dfrac{(i-1)!}{\log n}\right)^p\Bigg[\left|\|\phi'\|_\infty+O\left((\log n)^{-1}\right)\right|^p\int_{Rn^{-1}}^{Rn^{\varepsilon-1}}t^{N-ip-1}\mathrm dt\\
&\quad+\int_{Rn^{\varepsilon-1}}^{Rn^{-\varepsilon}}t^{N-ip-1}\mathrm dt+\left|\|\phi'\|_\infty+O\left((\log n)^{-1}\right)\right|^p\int_{Rn^{-\varepsilon}}^{R}t^{N-ip-1}\mathrm dt\Bigg]\\
&\quad+O\left((\log n)^{-p}\right)\\
&\leq \left(\dfrac{(i-1)!}{\log n}\right)^p\dfrac{R^{N-ip}}{N-ip}\Big[\left|\|\phi'\|_\infty+O\left((\log n)^{-1}\right)\right|^pn^{-(1-\varepsilon)(N-ip)}\\
&\quad+n^{-\varepsilon(N-ip)}+\left|\|\phi'\|_\infty+O\left((\log n)^{-1}\right)\right|^p\Big]+O\left((\log n)^{-p}\right)\\
&=O\left((\log n)^{-p}\right).
\end{align*}
This concludes the proof of \eqref{eqa2}.

To establish \eqref{eqa0}, it is enough to prove \eqref{eqa2}. By \eqref{eqa7} for $i=k$, we get
\begin{align*}
    \|\psi^{(k)}_{n,\varepsilon}\|_{L^p_{\sinh^{N-1}}}^p&=\left(\dfrac{(k-1)!}{\log n}\right)^p\Bigg[\left|\|\phi'\|_\infty+O\left((\log n)^{-1}\right)\right|^p\int_{Rn^{-1}}^{Rn^{\varepsilon-1}}t^{-1}\mathrm dt\\
&\quad+\int_{Rn^{\varepsilon-1}}^{Rn^{-\varepsilon}}t^{-1}\mathrm dt+\left|\|\phi'\|_\infty+O\left((\log n)^{-1}\right)\right|^p\int_{Rn^{-\varepsilon}}^{R}t^{-1}\mathrm dt\Bigg]+O\left((\log n)^{-p}\right)\\
    &=\left(\dfrac{(k-1)!}{\log n}\right)^p\Big[\left|\|\phi'\|_\infty+O\left((\log n)^{-1}\right)\right|^p\varepsilon\log n+\log n\\
    &\quad+\left|\|\phi'\|_\infty+O\left((\log n)^{-1}\right)\right|^p\varepsilon\log n\Big]+O\left((\log n)^{-p}\right)\\
    &=(k-1)!(\log n)^{1-p}\left[1+2\varepsilon\left|\|\phi'\|_\infty+O\left((\log n)^{-1}\right)\right|^p\right]+O\left((\log n)^{-p}\right)\\
    &\leq(k-1)!(\log n)^{1-p}\left[1+2^p\varepsilon\|\phi'\|_\infty^p+O\left((\log n)^{-1}\right)\right].
\end{align*}
This completes the proof of the inequalities \eqref{eqa2} and \eqref{eqa0} as desired.

Set
\begin{equation*}
    v_{n,\varepsilon}(t)=\dfrac{\psi_{n,\varepsilon}(t)}{\|\psi_{n,\varepsilon}\|_{W^{k,p}_{\sinh^{N-1}}}}.
\end{equation*}
Given
\begin{equation*}
    \mu>\mu_0=(\theta+1)\left[(k-1)!\right]^{\frac{p}{p-1}},
\end{equation*}
inequality \eqref{eqa0} guarantees that

\begin{align*}
\int_{0}^Re^{\mu|v_{n,\varepsilon
}|^{\frac{p}{p-1}}}&\sinh^\theta(t)\mathrm dt\geq \exp\left(\frac{\mu}{\|\psi_{n,\varepsilon}\|^{\frac{p}{p-1}}_{W^{k,p}_{\sinh^{N-1}}}}\right)\int_0^{\frac{R}{n}}\sinh^\theta(t)\mathrm dt\\
&\geq\exp\left(\frac{\mu}{\|\psi_{n,\varepsilon}\|^{\frac{p}{p-1}}_{W^{k,p}_{\sinh^{N-1}}}}\right)\int_0^{\frac{R}{n}}t^\theta\mathrm dt\\
&\geq C\exp\left(\dfrac{\mu\log n}{\left[(k-1)!\right]^{\frac{p}{p-1}}\left[1+2^p\varepsilon\|\phi'\|_\infty+O\left((\log n)^{-1}\right)\right]^{\frac{1}{p-1}}}-(\theta+1)\log n\right).
\end{align*}
Therefore, taking $\varepsilon>0$ sufficiently small, we have that the term on the right-hand side tends to be infinite when $n\to\infty$. This concludes the proof.
\end{proof}

\begin{proof}[Proof of Theorem \ref{theo16}]
It is a direct consequence of Propositions \ref{prop50}, \ref{prop51}, \ref{propitema}, and \ref{prop54}.
\end{proof}

One approach to prove Theorem \ref{theo15} is to adapt the proof of Theorem \ref{theo16} to this specific case. However, we will provide an alternative proof that uses Theorem \ref{theo16}. By utilizing the equivalence between the norms of the spaces $W^{k,p}_{\mathbb H,\mathrm{rad}}(B^{\mathbb H}_R)$ and $W^{k,p}((0,R),\sinh^{N-1}(t))$, assuming $N=kp$, as given by item (3) of Theorem \ref{theo10}, we have the following relation:
\begin{equation}\label{eqequiv}
\omega_{N-1}^{\frac1p}\|v\|_{W^{k,p}_{\sinh^{N-1}}}\leq\|u\|_{W^{k,p}(B_R^{\mathbb H})}\leq C_0\|v\|_{W^{k,p}_{\sinh^{N-1}}},
\end{equation}
where $C_0>0$ is a constant. Before using this relation, we need to separately prove attainability, as it does not follow directly from Theorem \ref{theo16}. This requires the compact embedding from Theorem \ref{theo11}, so we assume $\theta\geq0$. Consequently, we prove the following proposition, whose proof is analogous to Proposition \ref{propitema}. For completeness, we have included the proof.

\begin{prop}\label{propitemaS}
Let $\theta\geq0$ and $W^{k,p}_{\mathbb H,\mathrm{rad}}(B_R^{\mathbb H})$ be the Sobolev space with $N=kp$ and $p>1$. For every $0\leq\mu<\mu_0$, $\mathcal{L}_{\mu,\mathbb H}$ is attained. Moreover, if $u$ is a maximizer for $\mathcal{L}_{\mu,\mathbb H}$, then $u$ can be chosen nonnegative and such that $\|u\|_{W^{k,p}(B_R^{\mathbb H})}=1$.
\end{prop}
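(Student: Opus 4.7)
The plan is to adapt the argument of Proposition \ref{propitema} to the hyperbolic setting, relying on the compact embedding supplied by Theorem \ref{theo11}(3) together with the norm equivalence \eqref{eqequiv} (whose use is licit since $N=kp>(k-1)p$). First I would fix a maximizing sequence $(u_n)\subset W^{k,p}_{\mathbb H,\mathrm{rad}}(B_R^{\mathbb H})$ with $\|u_n\|_{W^{k,p}(B_R^{\mathbb H})}\leq 1$. By reflexivity and Theorem \ref{theo11}(3), up to a subsequence, $u_n\rightharpoonup u_0$ in $W^{k,p}_{\mathbb H,\mathrm{rad}}(B_R^{\mathbb H})$ and $u_n\to u_0$ strongly in $L^q_{\sinh^\theta}(B_R^{\mathbb H})$ for every $1\leq q<\infty$; weak lower semicontinuity of the norm then gives $\|u_0\|_{W^{k,p}(B_R^{\mathbb H})}\leq 1$, so $u_0$ is an admissible competitor.

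The heart of the argument will be to verify
\begin{equation*}
\int_{B_R^{\mathbb H}}e^{\mu|u_n|^{p/(p-1)}}\sinh^\theta d(x)\,\mathrm dV_g\;\longrightarrow\;\int_{B_R^{\mathbb H}}e^{\mu|u_0|^{p/(p-1)}}\sinh^\theta d(x)\,\mathrm dV_g.
\end{equation*}
Mirroring \eqref{eqtx}--\eqref{eqt10}, I would combine the pointwise bound $|e^x-e^y|\leq|x-y|(e^x+e^y)$ with H\"older's inequality and pick some $q>1$ with $q\mu<\mu_{0,\mathbb H}$. The factor $\int_{B_R^{\mathbb H}}\bigl||u_n|^{p/(p-1)}-|u_0|^{p/(p-1)}\bigr|^{q/(q-1)}\sinh^\theta d(x)\,\mathrm dV_g$ will then vanish as $n\to\infty$ by the strong $L^s_{\sinh^\theta}$ convergence of $u_n$ for arbitrarily large $s$.

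The main obstacle is a bound on $\int_{B_R^{\mathbb H}}e^{q\mu|u_n|^{p/(p-1)}}\sinh^\theta d(x)\,\mathrm dV_g$ that is uniform in $n$. Here I would invoke \eqref{eqequiv} to pass to the radial representative $v_n$ of $u_n$ and set $w_n=\omega_{N-1}^{1/p}v_n$, so that $\|w_n\|_{W^{k,p}_{\sinh^{N-1}}}\leq 1$; the polar coordinate formula \eqref{eqpolar} then recasts the integral as $\omega_{N-1}\int_0^R e^{q\mu\,\omega_{N-1}^{-1/(p-1)}|w_n|^{p/(p-1)}}\sinh^{\theta+N-1}(t)\,\mathrm dt$. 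Because $q\mu\,\omega_{N-1}^{-1/(p-1)}<\mu_{0,\mathbb H}\,\omega_{N-1}^{-1/(p-1)}=(\theta+N)[(k-1)!]^{p/(p-1)}$, which is exactly the critical threshold provided by Proposition \ref{prop51} for the weight $\sinh^{\theta+N-1}$ (note $\theta+N-1>-1$ since $\theta\geq 0$ and $N\geq 1$), the desired uniform bound follows.

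Passing to the limit will give $\mathcal{L}_{\mu,\mathbb H}=\int_{B_R^{\mathbb H}}e^{\mu|u_0|^{p/(p-1)}}\sinh^\theta d(x)\,\mathrm dV_g$, so $u_0$ is a maximizer. If $\|u_0\|_{W^{k,p}(B_R^{\mathbb H})}<1$ held, rescaling by its inverse would yield an admissible function with strictly larger integral, contradicting maximality; hence $\|u_0\|_{W^{k,p}(B_R^{\mathbb H})}=1$. Nonnegativity is then obtained by the same replacement $u_0\mapsto|u_0|$ used in Proposition \ref{propitema}, exploiting that the exponential integrand depends only on $|u_0|$.
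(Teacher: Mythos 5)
Your proposal is correct and follows essentially the same route as the paper: a maximizing sequence, weak convergence plus the compact embedding of Theorem \ref{theo11}, the bound $|e^x-e^y|\leq|x-y|(e^x+e^y)$ with H\"older, a uniform exponential bound via Proposition \ref{prop51}, and the rescaling/absolute-value step at the end. The only difference is cosmetic: you carry out explicitly (via \eqref{eqequiv}, \eqref{eqpolar}, and the rescaled radial function $w_n=\omega_{N-1}^{1/p}v_n$ with weight $\sinh^{\theta+N-1}$) the transfer to the one-dimensional weighted space that the paper invokes implicitly when citing Proposition \ref{prop51}, which in fact yields the uniform bound under the slightly weaker condition $q\mu<\mu_{0,\mathbb H}$.
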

\begin{proof}
Let $(u_n)$ in $W^{k,p}_{\mathbb H,\mathrm{rad}}(B_R^{\mathbb H})$ be a maximizing sequence for $\mathcal{L}_\mu$ with $\|u_n\|_{W^{k,p}(B_R^{\mathbb H})}\leq1$. By Theorem \ref{theo11} there exists $u_0\in W^{k,p}_{\mathbb H,\mathrm{rad}}(B_R^{\mathbb H})$ such that, up to a subsequence, $u_n\rightharpoonup u_0$ in $W^{k,p}_{\mathbb H,\mathrm{rad}}(B_R^{\mathbb H})$ and $u_n\rightarrow u_0$ in $L_{\sinh^\theta}^q(B_R^{\mathbb H})$ for all $q\geq1$. From $|e^x-e^y|\leq|x-y|(e^x+e^y)$ we get
\begin{align}
\Bigg|\int_{B_R^{\mathbb H}}\Big(e^{\mu|u_n|^{\frac{p}{p-1}}}&-e^{\mu|u_0|^{\frac{p}{p-1}}}\Big)\sinh^\theta d(x)\mathrm dV_g\Bigg|\nonumber\\
&\leq\mu\int_{B_R^{\mathbb H}}\left||u_n|^{\frac{p}{p-1}}-|u_0|^{\frac{p}{p-1}}\right|\left(e^{\mu|u_n|^{\frac{p}{p-1}}}+e^{\mu|u_0|^{\frac{p}{p-1}}}\right)\sinh^{\theta}d(x)\mathrm dV_g.\label{eqtxS}
\end{align}
Fixing $q>1$ with $q\mu<\mu_0$ we have

\begin{align}
    \int_{B_R^{\mathbb H}}&\left||u_n|^{\frac{p}{p-1}}-|u_0|^{\frac{p}{p-1}}\right|e^{\mu|u_n|^{\frac{p}{p-1}}}\sinh^\theta d(x)\mathrm dV_g\nonumber\\
    &\leq \left(\int_{B_R^{\mathbb H}}\left||u_n|^{\frac{p}{p-1}}-|u_0|^{\frac{p}{p-1}}\right|^{\frac{q}{q-1}}\sinh^\theta d(x)\mathrm dV_g\right)^{\frac{q-1}{q}}\left(\int_{B_R^{\mathbb H}}e^{q\mu|u_n|^{\frac{p}{p-1}}}\sinh^{\theta}d(x)\mathrm dV_g\right)^{\frac1q}\nonumber\\
&\leq\|u_n-u_0\|_{L^{\frac{p}{p-1}\frac{q}{q-1}}_{\sinh^\theta}(B_R^{\mathbb H})}^{\frac{p}{p-1}}\left(\int_{B_R^{\mathbb H}}e^{q\mu|u_n|^{\frac{p}{p-1}}}\sinh^\theta d(x)\mathrm dV_g\right)^{\frac1q}.\label{eqt10S}
\end{align}
By Proposition \ref{prop51}, the integral on the right-hand side is uniformly bounded in $n$. Using \eqref{eqtx} and \eqref{eqt10} with $u_n\to u_0$ in $L^{\frac{p}{p-1}\frac{q}{q-1}}_{\sinh^\theta}(B_R^{\mathbb H})$ we conclude
\begin{align*}
\mathcal{L}_{\mu,\mathbb H}&=\sup_{\substack{u\in W^{k,p}_{\mathbb H,\mathrm{rad}}(B_R^{\mathbb H}) \\ \|u\|_{W^{k,p}(B_R^{\mathbb H})}\leq1}}\int_{B_R^{\mathbb H}}e^{\mu|u|^{\frac{p}{p-1}}}\sinh^\theta d(x)\mathrm dV_g=\lim\int_{B_R^{\mathbb H}}e^{\mu|u_n|^{\frac{p}{p-1}}}\sinh^\theta d(x)\mathrm dV_g\\
&=\int_{B_R^{\mathbb H}}e^{\mu|u_0|^{\frac{p}{p-1}}}\sinh^\theta d(x)\mathrm dV_g.
\end{align*}

To prove that $\|u_0\|_{W^{k,p}(B_R^{\mathbb H})}=1$, assume $\|u_0\|_{W^{k,p}(B_R^{\mathbb H})}<1$ and define the function $\widetilde u=u_0/\|u_0\|_{W^{k,p}(B_R^{\mathbb H})}$ to get a contradiction with the fact that $u_0$ attains the supremum. We can assume that $u_0$ is nonnegative by replacing $u_0$ by $|u_0|$ if necessary. This concludes the proof.
\end{proof}

\begin{proof}[Proof of the Theorem \ref{theo15}]
$\mathrm{(a)}$ Note that, by item $\mathrm{(a)}$ of Theorem \ref{theo15},
\begin{equation*}
\int_{B_R^{\mathbb H}}|u|^{\frac{p}{p-1}}\sinh^\theta d(x)\mathrm dV_g=\omega_{N-1}\int_0^Re^{\mu|v|^{\frac{p}{p-1}}}\sinh^{\theta+N-1}(t)\mathrm dt<\infty,
\end{equation*}
for each $u\in W^{k,p}_{\mathbb H,\mathrm{rad}}(B_R^{\mathbb H})$.\\
$\mathrm{(b)}$ Let us assume that $u\in W^{k,p}_{\mathbb H,\mathrm{rad}}(B_R^{\mathbb H})$ with $\|u\|_{W^{k,p}(B_R^{\mathbb H})}\leq1$. Then, by \eqref{eqequiv}, $\|\omega^{\frac1p}_{N-1}v\|_{W^{k,p}_{\sinh^{N-1}}}\leq1$. Given $\mu<\mu_{0,\mathbb H}$, by the item $\mathrm{(b)}$ of Theorem \ref{theo15}, we have
\begin{equation*}
    \int_{B_R^{\mathbb H}}e^{\mu|u|^{\frac{p}{p-1}}}\sinh^{\theta}d(x)\mathrm dV_g=\omega_{N-1}\int_0^Re^{\mu\omega_{N-1}^{-\frac{1}{p-1}}|\omega_{N-1}^{\frac1p}v|^{\frac{p}{p-1}}}\sinh^{\theta+N-1}(t)\mathrm dt\leq C,
\end{equation*}
where $C>0$ does not depend on $u\in W^{k,p}_{\mathbb H,\mathrm{rad}}(B_R^{\mathbb H})$ with $\|u\|_{W^{k,p}(B_R^{\mathbb H})}\leq1$. The attainability was proved in Proposition \ref{propitemaS}.\\
$\mathrm{(c)}$ Given $\mu>C_0^{\frac{p}{p-1}}(\theta+N)[(k-1)!]^{\frac{p}{p-1}}$, by item $\mathrm{(c)}$ of Theorem \ref{theo16}, we obtain a sequence $(v_n)$ in $W^{k,p}((0,R),\sinh^{N-1}(t))$ such that $\|v_n\|_{W^{k,p}_{\sinh^{N-1}}}\leq1$ and
\begin{equation*}
\int_0^Re^{\mu C_0^{-\frac{p}{p-1}}|v_n|^{\frac{p}{p-1}}}\sinh^{\theta+N-1}(t)\mathrm dt\overset{n\to\infty}\longrightarrow\infty.
\end{equation*}
Setting $u_n(x)=C_0^{-1}v_n(d(x))$, and using item (3) of Theorem \ref{theo10}, we have $u_n\in W^{k,p}_{\mathbb H,\mathrm{rad}}(B_R^{\mathbb H})$ with $\|u_n\|_{W^{k,p}(B_R^{\mathbb H})}\leq1$ and
\begin{equation*}
\int_{B_R^\mathbb H}e^{\mu|u_n|^{\frac{p}{p-1}}}\sinh^{\theta}d(x)\mathrm dV_g=\omega_{N-1}\int_0^Re^{\mu C_0^{-\frac{p}{p-1}}|v_n|^{\frac{p}{p-1}}}\sinh^{\theta+N-1}(t)\mathrm dt\overset{n\to\infty}\longrightarrow\infty.
\end{equation*}
This completes the proof.
\end{proof}

\section{Final Comments}\label{sec8}

In this section, we present our concluding remarks on the limitations and potential applications of the results established in this paper. Specifically, we address the reasons why specific values of $\mu>0$ for which Theorems \ref{theo16} and \ref{theo15} are not applicable, providing a detailed discussion of these constraints. Additionally, we explore some scenarios in which our main theorems can be extended or applied to broader contexts, highlighting our work's versatility and potential impact.

\begin{remark}\label{remark}
The reader should note that Theorem \ref{theo15} does not guarantee whether $\mathcal{L}_{\mu,\mathbb H}$ is finite or infinite for $\mu$ within the interval $[\mu_{0,\mathbb H},\widetilde\mu_{0,\mathbb H}]$, where $\widetilde\mu_{0,\mathbb H}=C_0^{\frac{p}{p-1}}(\theta+N)[(k-1)!]^{\frac{p}{p-1}}$. Even by adapting the sequence used in Proposition \ref{prop54} for the space $W^{k,p}_{\mathbb H,\mathrm{rad}}(B_R^{\mathbb{H}})$, it is not possible to prove that $\mathcal{L}_{\mu,\mathbb H}=\infty$ for $\mu>\mu_{0,\mathbb H}$. Seeking insight from the sequences in \cite[Theorem 1.1]{nguyen2020sharp} and \cite[Theorem 1.1]{zbMATH06536855}, we attempted to proceed but could not conclude for $\mu>\mu_{0,\mathbb H}$.

Let us analyze the details of adjusting the sequence in Proposition \ref{prop54} for the space $W^{k,p}_{\mathbb H,\mathrm{rad}}(B_R^{\mathbb H})$. Define the sequence
\begin{equation*}
\widetilde\psi_{n,\varepsilon}(x)=H\left(\dfrac{\log\frac{R}{d(x)}}{\log n}\right),\quad x\in B_R^{\mathbb H}\backslash\{0\}.
\end{equation*}
Additionally, consider $\psi_{n,\varepsilon}\colon(0,R)\to\mathbb R$ such that $\widetilde\psi_{n,\varepsilon}(x)=\psi_{n,\varepsilon}(d(x))$ for each $x\in B_R^{\mathbb H}\backslash\{0\}$. As previously proved,
\begin{equation*}
    \|\psi_{n,\varepsilon}\|^p_{W^{k,p}_{\sinh^{N-1}}}\leq [(k-1)!]^p(\log n)^{1-p}\left[1+2^{p}\varepsilon\|\phi'\|^p_\infty+O\left((\log n)^{-1}\right)\right].
\end{equation*}
Thus, we expect that
\begin{equation}\label{eq999}
\|\widetilde\psi_{n,\varepsilon}\|^p_{W^{k,p}(B^{\mathbb H}_R)}\leq \omega_{N-1}[(k-1)!]^p(\log n)^{1-p}\left[1+2^{p}\varepsilon\|\phi'\|^p_\infty+O\left((\log n)^{-1}\right)\right].
\end{equation}
Assuming \eqref{eq999} and following the same arguments as in Proposition \ref{prop54}, we conclude that $\mathcal{L}_{\mu,\mathbb H}=\infty$ for $\mu>\mu_{0,\mathbb H}$. A straightforward calculation shows that
\begin{equation*}
\||\nabla^i\widetilde\psi_{n,\varepsilon}|_g\|^p_{L^p(B_R^{\mathbb H})}=O\left((\log n)^{-p}\right)\quad\forall i=0,\ldots,k-1.
\end{equation*}
However, the following inequality, which corresponds to the case $i=k$, does not hold:
\begin{equation*}
\||\nabla^k\widetilde\psi_{n,\varepsilon}|_g\|^p_{L^p(B_R^{\mathbb H})}\leq \omega_{N-1}[(k-1)!]^p(\log n)^{1-p}\left[1+2^{p}\varepsilon\|\phi'\|^p_\infty+O\left((\log n)^{-1}\right)\right]
\end{equation*}
This is because even if we isolate the higher order derivative, the other terms must be bounded by the norm of the higher order derivative. Note that
\begin{align*}
\left(\nabla^k\widetilde\psi_{n,\varepsilon}\right)_{i_1\ldots i_k}&=\left(\dfrac{2}{1-|x|^2}\right)^k\dfrac{x_{i_1}\cdots x_{i_k}}{|x|^k}\psi_{n,\varepsilon}^{(k)}(d(x))+\left(\mbox{terms with lower-order derivatives}\right)\\
&\leq\left(\dfrac{2}{1-|x|^2}\right)^k|\psi^{(k)}_{n,\varepsilon}(d(x))|^2+C\sum_{j=1}^{k-1}\dfrac{\psi^{(j)}(d(x))}{|x|^{k-j}}.
\end{align*}
By Proposition \ref{prophardy}, each $\|\frac{\psi^{(j)}(d(x))}{|x|^{k-j}}\|_{L^p_{\sinh^{N-1}}}$ can be estimated by $\|\psi^{(k)}\|_{L^{p}_{\sinh^{N-1}}}$, which is insufficient to establish the unboundedness of $\mathcal{L}_{\mu,\mathbb H}$ for $\mu>\mu_{0,\mathbb H}$. It is important to note that this issue also arises in the $\mathbb R^N$ scenario.
\end{remark}

\begin{center}\textbf{Critical case of the Adams-type inequality}\end{center} 

One may observe that Theorem \ref{theo16} does not provide any guarantees regarding $\mathcal{L}_{\mu}$ in the critical case $\mu = \mu_0$. To our knowledge, this problem remains unsolved, even in the power-type weight scenario; see \cite[Theorem 1.4]{arXiv:2302.02262}. This leads us to the following problem.
\begin{conj}
Let $\theta>-1$ and $W^{k,p}((0,R),\sinh^{N-1}(t))$ be the weighted Sobolev space with $0<R<\infty$, $N=kp$, and $p>1$. Then $\mathcal{L}_{\mu_0}$ is finite.
\end{conj}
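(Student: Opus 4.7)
The plan is to sharpen the subcritical argument of Proposition \ref{prop51} so that no $(1+\varepsilon)$-cushion is needed at leading order, then to close the critical threshold by a concentration-compactness argument in the spirit of Carleson--Chang and Adams. Write $\alpha = \|v^{(k)}\|_{L^p_{\sinh^{N-1}}}$ and $\beta^p = \sum_{j=0}^{k-1}\|v^{(j)}\|_{L^p_{\sinh^{N-1}}}^p$, so that any admissible $v$ satisfies $\alpha^p + \beta^p \leq 1$, and in particular $\beta^p \leq 1-\alpha^p$. First I would revisit the proof of Proposition \ref{proprls1} to obtain a sharper bound in which the remainder constant multiplies only $\beta$ rather than the full norm, namely
\begin{equation*}
|v(t)| \leq \frac{\alpha}{(k-1)!}\left(\log\frac{\tanh(R/2)}{\tanh(t/2)}\right)^{(p-1)/p} + C\beta, \qquad t\in (0,R].
\end{equation*}
This is plausible because the integration-by-parts identity \eqref{sc3} already separates the top derivative into the principal term, and the correction uses only $v^{(j)}$ for $j\leq k-1$ via Proposition \ref{proprls}.

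Next, I would substitute this refined bound into the integral defining $\mathcal{L}_{\mu_0}$ and use the sharp expansion $(a+b)^{p/(p-1)} \leq a^{p/(p-1)} + \tfrac{p}{p-1}a^{1/(p-1)}b + C_p b^{p/(p-1)}$, followed by the change of variables $\tau = \log(\tanh(R/2)/\tanh(t/2))$ mapping $(0,R]$ onto $[0,\infty)$, under which $\sinh^\theta(t)\,\mathrm dt$ is bounded above by $C e^{-(\theta+1)\tau}\,\mathrm d\tau$. Setting $\eta = 1-\alpha^{p/(p-1)}$, the exponent in the integrand is majorized by $-(\theta+1)\eta\,\tau + C'\beta\,\tau^{1/p} + O(\beta^{p/(p-1)})$. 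A direct saddle-point computation shows that $-\eta(\theta+1)\tau + C'\beta\tau^{1/p}$ attains its maximum value $\tfrac{p-1}{p}(C'\beta)^{p/(p-1)}(p\eta(\theta+1))^{-1/(p-1)}$. Since $1-\alpha^p \sim (p-1)\eta$ as $\alpha\to 1^-$, the constraint $\beta^p \leq 1-\alpha^p$ forces the ratio $\beta^{p/(p-1)}/\eta^{1/(p-1)}$ to be uniformly bounded, so the \emph{height} of the exponential bump is uniformly bounded.

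The main obstacle is that the \emph{width} of the saddle region diverges like $\eta^{-1}$ as $\alpha\to 1$, preventing a naive $L^\infty$-times-length bound from closing. Overcoming this will require either (i) a next-order expansion producing a strictly concave quadratic correction $-c_2(\beta\tau^{1/p})^2$ that regularizes the width, or (ii) a genuine concentration-compactness analysis: decompose a maximizing sequence $\{v_n\}$ into a part with $\alpha_n$ bounded away from $1$ (handled by the subcritical proof applied on a shrinking ball) and a concentrating remainder, then perform a Carleson--Chang blow-up near $t=0$ and compare with the explicit Moser profile. Adapting Carleson--Chang's level-set integration to the weight $\sinh^{N-1}$ and the absence of boundary conditions at $t=R$ appears to be the deepest step and is the analogue of the difficulty that still keeps the corresponding Euclidean power-weight problem \cite[Theorem 1.4]{arXiv:2302.02262} open.
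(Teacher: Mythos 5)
You have not proved the statement, and indeed you could not have compared it against a proof in the paper: this is posed there as an open question (the paper states explicitly that the critical case $\mu=\mu_0$ remains unsolved even for power-type weights), so your proposal must stand entirely on its own, and it does not close. Two concrete gaps. First, your refined radial lemma, in which the remainder multiplies only $\beta=\bigl(\sum_{j\le k-1}\|v^{(j)}\|^p_{L^p_{\sinh^{N-1}}}\bigr)^{1/p}$, does not follow from the paper's proof of Proposition \ref{proprls1}: there the remainder terms \eqref{sc4}--\eqref{sc5} (the boundary terms in \eqref{sc3} and the $O(\sinh^2)$ correction) are controlled by applying Proposition \ref{proprls} to $v^{(j)}\in W^{k-j,p}((0,R),\sinh^{N-1}(t))$, whose norm contains $\|v^{(k)}\|_{L^p_{\sinh^{N-1}}}=\alpha$, so the constant in the paper multiplies the full norm, not $\beta$. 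This refinement is not cosmetic: if the remainder is only of the form $C(\alpha+\beta)$, the cross term in your expansion $(a+b)^{p/(p-1)}\le a^{p/(p-1)}+\tfrac{p}{p-1}a^{1/(p-1)}b+C_pb^{p/(p-1)}$ carries a coefficient that does not vanish as $\alpha\to1^-$, the height of the exponential bump then scales like $\eta^{-1/(p-1)}$ with $\eta=1-\alpha^{p/(p-1)}$, and the whole scheme collapses before the width issue even arises.

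Second, even granting the refined lemma, you acknowledge yourself that the bound on the \emph{height} of the saddle does not control the integral because the \emph{width} of the near-maximal region grows like $\eta^{-1}$ as $\alpha\to1^-$, so the naive estimate gives $\mathcal{L}_{\mu_0}\lesssim e^{O(1)}\eta^{-1}\to\infty$. The two remedies you name — a second-order concave correction, or a Carleson--Chang type blow-up analysis adapted to the weight $\sinh^{N-1}$ and to the absence of boundary conditions at $t=R$ — are precisely the missing mathematics; neither is carried out, and there is no indication how the lack of a Dirichlet condition (which defeats Moser's one-dimensional lemma and Adams' rearrangement machinery in this setting) would be handled. So the proposal is a reasonable research roadmap, consistent with why the question is open, but it is not a proof: the critical assertion $\mathcal{L}_{\mu_0}<\infty$ remains unestablished at exactly the step you flag as "the deepest."
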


\begin{center}\textbf{Application on a H\'enon-type equation}\end{center}

Using the embeddings established here, we consider the existence of solutions for a higher order class of elliptic problems, known as the higher order Hénon-type equation:
\begin{equation}\label{eqop1}
\left\{\begin{array}{cc}
     (-\Delta)^ku=\sinh^\theta d(x)|u|^{p-1}u,&\mbox{in }B_R^{\mathbb H},  \\
     \mathcal Bu=0,&\mbox{on }\partial B_R^{\mathbb H},
\end{array}\right.
\end{equation}
where $B_R^{\mathbb H}\subset\mathbb H^N$ is a bounded ball and $\mathcal Bu=0$ represents either the Navier or Dirichlet boundary conditions. The norm arising from $(-\Delta)^ku$ is equivalent to the one studied in this paper; see \cite{zbMATH03514476}. The study of \eqref{eqop1} remains an open problem in hyperbolic space. Under a subcritical condition $1<p<p^*$, the compact embedding from Theorem \ref{theo11} can be applied. For further details, when $k=2$ in $\mathbb R^N$, see \cite[Theorem 1.6]{zbMATH05978504} and \cite[Corollaire 2.1 and Th\'eor\'eme 1.1]{zbMATH04175546}. Moreover, for $k=1$ in hyperbolic space $\mathbb H^N$, see \cite{zbMATH06508496}.

\begin{center}\textbf{Application on Inequalities Involving Unbounded Domains}\end{center}

There are several approaches for studying Trudinger-Moser inequalities on unbounded domains, pioneered by the following classical works \cite{zbMATH01370657,zbMATH00062849,zbMATH01449917,zbMATH05270603}. One such approach involves examining exact growth inequalities for higher order derivatives. For comprehensive details on these exact growth inequalities, we refer to \cite{zbMATH06435874,zbMATH06497751,zbMATH06533141,zbMATH06950439} for $\mathbb{R}^N$ and \cite{zbMATH06579881,zbMATH07318489} for $\mathbb{H}^N$. Consider the following conjecture, which is expected at least under the assumption that $u$ is radial or without the weight $\sinh^\eta d(x)$. The following question corresponds to the weighted exact growth inequality established in \cite{zbMATH07827601}, adapted for the hyperbolic scenario.
\begin{conj}
Consider $\eta\in\mathbb R$ and $W^{k,p}_{\mathbb H}(\mathbb H^N)$ with $N=kp$. Assuming $p>1$ and some condition on $\eta$ and $\nu$, there exists $\beta_0>0$ such that for all $u\in W^{k,p}_{\mathbb H}(\mathbb H^N)$ satisfying $\|\nabla^ku\|_{L^p_{\nu}(\mathbb H^N)}\leq1$,
\begin{equation*}
\int_{\mathbb H^N}\dfrac{\exp_p(\beta_{0}|u|^{\frac{p}{p-1}})}{(1+|u|)^{\frac{p}{p-1}}}\sinh^\eta d(x)\mathrm dV_g\leq C\|u\|^p_{L^p_\eta(\mathbb H^N)}.
\end{equation*}
Moreover, there exists a sequence $(u_n)$ such that $\|\nabla^ku_n\|_{L^p_{\nu}(\mathbb H^N)}\leq 1$ and
\begin{equation*}
    \dfrac{1}{\|u_n\|^p_{L^p_{\eta}(\mathbb H^N)}}\int_{\mathbb H^N}\dfrac{\exp_p\left(\beta_{0}|u_n|^{\frac{p}{p-1}}\right)}{(1+|u_n|)^q}\sinh^\eta d(x)\mathrm dV_g\overset{n\to\infty}\longrightarrow \infty,\quad\forall q<\frac{p}{p-1}.
\end{equation*}
Furthermore, for all $\beta>\beta_{0}$ and $q\geq0$, we have
\begin{equation*}
\int_0^\infty\dfrac{\exp_p\left(\beta|u_n|^{\frac{p}{p-1}}\right)}{(1+|u_n|)^q}r^\eta\mathrm dr\overset{n\to\infty}\longrightarrow \infty.
\end{equation*}
\end{conj}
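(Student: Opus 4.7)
The plan is to reduce the weighted exact-growth inequality to a one-dimensional radial problem, following the strategy pioneered by Ibrahim--Masmoudi--Nakanishi and developed in hyperbolic settings by Lu--Tang and Ng\^o--Nguyen. Since the weight $\sinh^{\eta}d(x)$ depends only on the geodesic radius and the norm $\|\nabla^{k}u\|_{L^{p}_{\nu}(\mathbb H^{N})}$ controls (and is controlled by) its radial counterpart through Theorems \ref{theouv} and \ref{theo10}, I would first restrict to radially symmetric $u$ and translate the problem to $W^{k,p}((0,\infty),\sinh^{N-1}(t))$; the non-radial case would be handled by hyperbolic rearrangement together with a P\'olya--Szeg\H{o} type inequality on $\mathbb H^{N}$.

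Next, decompose $\mathbb H^{N}=A\cup B$ with $A=\{|u|\le 1\}$ and $B=\{|u|>1\}$. On $A$ the truncated exponential satisfies $\exp_{p}(\beta_{0}s^{p/(p-1)})/(1+s)^{p/(p-1)}\le C\,s^{p}$ for $0\le s\le 1$, so the $A$-contribution is immediately bounded by $C\|u\|_{L^{p}_{\eta}(\mathbb H^{N})}^{p}$. On $B$, which by the hyperbolic radial decay of Lemma \ref{decaylemma} and Proposition \ref{proprls1} must sit inside a geodesic ball whose radius grows at most logarithmically in $\|u\|$, I would expand $\exp_{p}$ as a convergent power series and integrate term by term using the sharp radial decay together with the polar formula \eqref{eqpolar}. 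This is where the critical constant is fixed: choose $\beta_{0}$ so that the exponent exactly matches the weight $\sinh^{\eta}d(x)$ at the borderline rate dictated by the radial lemma, producing the candidate $\beta_{0}=(\eta+N)[\omega_{N-1}^{1/p}(k-1)!]^{p/(p-1)}$ in the standard case $\nu=N-1$.

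For the sharpness part I would adapt the Moser-type family used in Proposition \ref{prop54}, normalising
\[
u_{n}(x)=\frac{1}{\|\psi_{n,\varepsilon}\|_{W^{k,p}_{\sinh^{N-1}}}}\,\psi_{n,\varepsilon}(d(x)),\qquad \psi_{n,\varepsilon}(t)=H\!\left(\frac{\log(R/\tanh(t/2))}{\log n}\right),
\]
and carefully computing $\|\nabla^{k}u_{n}\|_{L^{p}_{\nu}}$, $\|u_{n}\|_{L^{p}_{\eta}}$, and the weighted exponential integral. Sending $\varepsilon\downarrow 0$ and $n\to\infty$ would yield the blow-up for any $\beta>\beta_{0}$, while for $\beta=\beta_{0}$ a matched expansion on the core region $d(x)\sim R/n$ would show that dividing by $(1+|u_{n}|)^{q}$ with any $q<p/(p-1)$ still produces divergence, demonstrating the optimality of the polynomial correction.

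The main obstacle, already flagged in Remark \ref{remark}, is that for $k\ge 2$ the recursive covariant-derivative formula \eqref{eqsf} couples the top-order derivative with lower-order ones through the Christoffel symbols, so the one-dimensional computation of $\|\psi^{(k)}\|_{L^{p}_{\sinh^{N-1}}}$ does not transfer cleanly to $\|\nabla^{k}u_{n}\|_{L^{p}_{\nu}}$: Hardy-type bounds close the lower-order terms but leak constants. Consequently, isolating the sharp leading constant $[(k-1)!]^{p}(\log n)^{1-p}(1+o(1))$ will require either a cancellation argument exploiting the exact form of \eqref{eqsf}, or a compensation absorbed by the $\|u\|_{L^{p}_{\eta}}^{p}$ term on the right-hand side; pinpointing the admissible compatibility range between $\eta$ and $\nu$ under which this closure succeeds is, I expect, the decisive and most delicate step.
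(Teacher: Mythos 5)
The statement you are proving is not proved in the paper at all: it appears in Section \ref{sec8} as an open Question, and the authors explicitly say it is only ``expected'' under extra assumptions (radial $u$, or no weight). So there is no proof of record to match, and what you have written is a research plan rather than a proof; as it stands it contains gaps that are exactly the ones the paper identifies as the reason the problem is open. First, your reduction to the one-dimensional radial problem is not justified for $k\geq 2$: a P\'olya--Szeg\H{o}/rearrangement argument controls only first-order gradients, and there is no symmetrization inequality for $\|\nabla^k u\|_{L^p_\nu}$ when $k\geq2$ (this is precisely why Adams-type inequalities require representation-formula or level-set methods rather than symmetrization). Since the conjecture is stated for all of $W^{k,p}_{\mathbb H}(\mathbb H^N)$, this step fails at the outset. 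Second, on the set $\{|u|>1\}$, expanding $\exp_p$ and integrating term by term against the decay of Lemma \ref{decaylemma} and Proposition \ref{proprls1} cannot produce the sharp constant $\beta_0$ together with the exact-growth denominator $(1+|u|)^{\frac{p}{p-1}}$: that combination is known (already in $\mathbb R^N$, in the works of Ibrahim--Masmoudi--Nakanishi and Masmoudi--Sani cited in Section \ref{sec8}) to require a fine decomposition of level sets and a careful balance between the region where $u$ is of size comparable to $\left(\log\right)^{\frac{p-1}{p}}$ of the measure and the remaining gradient budget; a crude series bound either loses the sharp $\beta_0$ or fails to control the borderline terms where the series does not converge uniformly.

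Third, and most seriously, your sharpness argument rests on transferring the one-dimensional estimate $\|\psi_{n,\varepsilon}^{(k)}\|^p_{L^p_{\sinh^{N-1}}}\leq[(k-1)!]^p(\log n)^{1-p}(1+o(1))$ to the covariant norm $\|\nabla^k u_n\|_{L^p_\nu(\mathbb H^N)}$. Remark \ref{remark} of the paper shows exactly why this does not go through: by \eqref{eqsf} the component $(\nabla^k\widetilde\psi_{n,\varepsilon})_{i_1\ldots i_k}$ contains lower-order terms of size $\psi^{(j)}(d(x))/|x|^{k-j}$, and the only available control on them is the Hardy inequality of Proposition \ref{prophardy}, which bounds them by a constant multiple of $\|\psi^{(k)}\|_{L^p_{\sinh^{N-1}}}$ rather than by quantities of lower order in $(\log n)^{-1}$; this inflates the leading constant and destroys the matching with $\beta_0$. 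You acknowledge this obstacle and propose either ``a cancellation argument'' or ``a compensation absorbed by $\|u\|^p_{L^p_\eta}$,'' but neither is carried out, and identifying the admissible relation between $\eta$ and $\nu$ is likewise left open. In short, the two steps on which sharpness and the upper bound both hinge are asserted, not proved, so the proposal does not settle the conjecture; it is a reasonable outline of a strategy, with the decisive difficulties (higher-order symmetrization, exact-growth level-set analysis, and the covariant-versus-radial norm discrepancy of Remark \ref{remark}) still unresolved.
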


\begin{center}
\textbf{Application on Hardy-Sobolev-Maz'ya inequality}
\end{center}

We now compare our results on weighted embeddings for radial functions with existing Sobolev embeddings in the literature. We first recall that the GJMS operators on $\mathbb H^N$ is defined as follows (see \cite{zbMATH04198939},  \cite{zbMATH06197796}, and \cite{zbMATH06195229,zbMATH05987929})
 \begin{equation}\label{1.5}
 	P_{k}=P_{1}(P_{1}+2)\cdot\cdots\cdot(P_{1}+k(k-1)),\;\; k\in\mathbb{N},
 \end{equation}
 where $P_{1}=-\Delta_{\mathbb{H}}-\frac{n(n-2)}{4}$ is the conformal Laplacian on $\mathbb H^N$. The sharp Sobolev inequalities on $\mathbb H^N$ are given as follows (see \cite{zbMATH00945657} for the case $k=1$ and \cite{zbMATH00447008}, together with \cite{zbMATH06187289}, for the case $2\leq k<\frac{n}{2}$):
 \begin{equation}\label{1.6}
 	\int_{\mathbb H^N}(P_{k}u)u\mathrm dV_g\geq S_{n,k}\left(\int_{\mathbb H^N}|u|^{\frac{2n}{n-2k}}\mathrm dV_g\right)^{\frac{n-2k}{n}},\quad u\in
 	C^{\infty}_{0}(\mathbb H^N),\quad 1\leq k<\frac{n}{2},
 \end{equation}
 where $S_{n,k}$ is the best $k$-th order Sobolev constant.  
 
 Recently, Lu and Yang \cite{zbMATH07155018} have established higher order Poincare-Sobolev inequality on hyperbolic spaces (see also \cite{zbMATH07483893}), namely the so-called Hardy-Sobolev-Mazya inequality using Helgason-Fourier analysis on hyperbolic spaces (see \cite{zbMATH03864136}, \cite{zbMATH00615015}, \cite{zbMATH03916373}). 
 
 \begin{theorem}\label{th1.2}
 	Let $2\leq k<\frac{n}{2}$ and $2<p\leq\frac{2n}{n-2k}$. There exists a positive constant $C=C(n,p)$ such that for each $u\in C^{\infty}_{0}(\mathbb H^N)$,
 	\begin{equation}\label{1.11a}
 		\int_{\mathbb H^N}(P_{k}u)udV- \prod^{k}_{i=1}\frac{(2i-1)^{2}}{4}\int_{\mathbb H^N}u^{2}dV\geq C\left(\int_{\mathbb H^N}|u|^{p}dV\right)^{\frac{2}{p}}.
 	\end{equation}
 \end{theorem}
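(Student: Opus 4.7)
The plan is to establish this higher-order Hardy-Sobolev-Maz'ya inequality via Helgason-Fourier analysis on $\mathbb H^N$, in the spirit of the strategy attributed to Lu-Yang. First, I would apply the Helgason-Fourier transform $u\mapsto \widehat u(\lambda,\sigma)$ and its Plancherel formula with the Harish-Chandra density $|c(\lambda)|^{-2}$. Since the conformal Laplacian $P_1=-\Delta_{\mathbb H}-n(n-2)/4$ acts as multiplication by $\lambda^2+1/4$ on the Fourier side, the factorization in \eqref{1.5} makes $P_k$ a Fourier multiplier with symbol
$$m_k(\lambda) \;=\; \prod_{i=1}^k\!\left(\lambda^2+\tfrac{(2i-1)^2}{4}\right),$$
and in particular $m_k(0)=\prod_{i=1}^k (2i-1)^2/4$ is exactly the constant appearing in the Hardy term of \eqref{1.11a}. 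By Plancherel, the left-hand side of \eqref{1.11a} therefore equals
$$\int_{\mathbb R\times\partial\mathbb H^N}\!\!\bigl[m_k(\lambda)-m_k(0)\bigr]\,|\widehat u(\lambda,\sigma)|^2\,\frac{d\lambda\,d\sigma}{|c(\lambda)|^2}.$$

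Second, I would exploit the algebraic structure of this multiplier. Expanding the product and subtracting $m_k(0)$ reveals the factorization
$m_k(\lambda)-m_k(0)=\lambda^2\,Q_k(\lambda^2),$
where $Q_k$ is a polynomial of degree $k-1$ with strictly positive coefficients. In particular, the symbol is pointwise comparable to $\lambda^2(1+\lambda^{2(k-1)})$ and therefore dominates a positive constant multiple of the symbol $\lambda^{2k}$ of the operator $A^k:=(P_1-\tfrac14)^k=(-\Delta_{\mathbb H}-\rho^2)^k$ on the high-frequency regime, while on the low-frequency regime it dominates $c\lambda^{2}$, which controls the low-frequency part of the same operator after a Littlewood-Paley cut. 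Altogether this should yield the quadratic-form inequality
$$\int_{\mathbb H^N}(P_ku)u\,dV-m_k(0)\int_{\mathbb H^N}u^2\,dV \;\geq\; c_{n,k}\int_{\mathbb H^N}(A^ku)u\,dV.$$

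Third, I would reduce the result to a shifted higher-order Sobolev embedding for $A^k$. At the endpoint $p=\frac{2n}{n-2k}$ this is the statement $\int (A^k u)u\,dV\gtrsim \|u\|_{L^{2n/(n-2k)}}^{2}$, which can be obtained either by coupling the sharp Sobolev inequality \eqref{1.6} with an iteration of the ground-state gap $P_1\geq \tfrac14$, or—more robustly—by transporting the inequality via the conformal equivalence to the Poincar\'e ball and invoking the higher-order Euclidean Hardy-Sobolev-Maz'ya inequality of Maz'ya/Benguria-Frank-Loss/Tertikas-Tintarev type. The subcritical range $2<p<\frac{2n}{n-2k}$ would then follow by H\"older interpolation between this endpoint estimate and the trivial $L^2$ bound coming from the non-negativity of $P_k-m_k(0)$ as a self-adjoint operator.

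The main obstacle is the step from the quadratic-form comparison $P_k-m_k(0)\gtrsim A^k$ to the $L^p$ bound, since Plancherel only compares quadratic forms at the $L^2$ level. Making this rigorous requires sharp pointwise estimates on the Schwartz kernels of $(P_k-m_k(0))^{-1}$ and $(A^k)^{-1}$ on $\mathbb H^N$: these kernels carry a Euclidean-like Riesz-potential singularity on the diagonal (which supplies all of the $L^p$ mass relevant for the critical exponent) and decay exponentially at infinity at a rate controlled by $\rho=(n-1)/2$ (which is harmless provided one is careful with the interaction between the spectral shift and the Harish-Chandra measure). A clean way to bypass the direct kernel analysis is to perform a stereographic projection to the Euclidean ball, where the analogue of \eqref{1.11a} reduces to a weighted higher-order Hardy-Sobolev-Maz'ya inequality already available in the literature; the GJMS operators' conformal covariance then transfers the inequality back to $\mathbb H^N$ without changing the constants that matter.
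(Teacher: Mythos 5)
First, a point of context: the paper does not prove Theorem \ref{th1.2} at all; it is quoted in the Final Comments section as a known result of Lu and Yang, established elsewhere via Helgason--Fourier analysis on $\mathbb H^N$. So there is no in-paper argument to compare against; your proposal can only be measured against the known strategy. Your Fourier-side setup is correct and matches that strategy: $-\Delta_{\mathbb H}$ has Helgason--Fourier symbol $\lambda^2+\rho^2$ with $\rho=\frac{n-1}{2}$, so $P_1$ has symbol $\lambda^2+\frac14$, the factorization \eqref{1.5} gives $m_k(\lambda)=\prod_{i=1}^k\bigl(\lambda^2+\frac{(2i-1)^2}{4}\bigr)$, and $m_k(0)$ is exactly the Hardy constant in \eqref{1.11a}; moreover $m_k(\lambda)-m_k(0)\geq\lambda^{2k}$ pointwise, so by Plancherel the left-hand side of \eqref{1.11a} dominates $\int_{\mathbb H^N}\bigl((-\Delta_{\mathbb H}-\rho^2)^k u\bigr)u\,dV$ (no Littlewood--Paley decomposition is even needed for this step).

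The genuine gap is in your third step, and it is precisely where all the difficulty of the theorem lives: you must prove $\int_{\mathbb H^N}\bigl((-\Delta_{\mathbb H}-\rho^2)^k u\bigr)u\,dV\gtrsim\|u\|_{L^{2n/(n-2k)}}^2$, a Poincar\'e--Sobolev inequality for the operator whose $L^2$ spectrum has been shifted to start at $0$, and neither of your proposed shortcuts delivers it. Coupling \eqref{1.6} with the spectral gap $P_1\geq\frac14$ goes the wrong way: since $(-\Delta_{\mathbb H}-\rho^2)^k\leq P_k$ in the sense of quadratic forms, the sharp Sobolev inequality for $P_k$ gives no lower bound on the smaller form, and iterating the gap only worsens this. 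The conformal-transference bypass is essentially circular: the GJMS covariance transports $\int(P_ku)u\,dV$ cleanly, but the subtracted term $m_k(0)\int u^2\,dV$ is not conformally natural --- on the half-space or ball it becomes a boundary-distance weighted Hardy term with the full constant $\prod_{i=1}^k\frac{(2i-1)^2}{4}$ --- and the resulting Euclidean higher-order Hardy--Sobolev--Maz'ya inequality is not "already available in the literature" in the generality you need (Benguria--Frank--Loss is $k=1$, Tertikas--Tintarev is $k=2$ in restricted settings); establishing it for all $k$ is exactly the content of the Lu--Yang theorem you are trying to prove. So the honest route is the one you flag as the "main obstacle": sharp pointwise upper bounds on the kernels of the shifted Riesz potentials $(-\Delta_{\mathbb H}-\rho^2)^{-k/2}$ (Euclidean-type singularity on the diagonal, exponential decay tied to the Harish-Chandra $c$-function at infinity), followed by a Hardy--Littlewood--Sobolev / O'Neil-type argument; without carrying out those estimates the proposal does not constitute a proof, and the subcritical range $2<p<\frac{2n}{n-2k}$ also does not follow by plain H\"older interpolation against $L^2$ on the infinite-volume space $\mathbb H^N$ --- one needs the kernel decay (or a Poincar\'e inequality for the shifted operator) there as well.
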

 The inequality \eqref{1.11a} for $k=1$ corresponds to the first-order Hardy-Sobolev-Maz'ya inequality, which was originally established by Maz'ya in half spaces (see \cite{zbMATH00195222} and \cite{zbMATH05604409}). Lu and Yang have also developed higher order versions of these Hardy-Sobolev-Maz'ya inequalities for complex hyperbolic spaces \cite{zbMATH07557162}, and by Flynn, Lu, and Yang for quaternionic and octonionic hyperbolic spaces \cite{zbMATH07835927}.

It remains unclear whether we can prove the inequality \eqref{1.11a} (or even \eqref{1.6}) for symmetric functions without the any boundary conditions on bounded geodesic balls $B_R^{\mathbb H}\subset\mathbb H^N$. Specifically, it is unknown whether results can be established for symmetric functions on $B^{\mathbb H}_R$ without the any boundary conditions that improves the inequality in Theorem \ref{theo11} by subtracting a Hardy term. Additionally, it would be interesting to explore whether a weighted version of the inequality \eqref{1.11a} can be established for radial functions on $\mathbb H^N$. These remain intriguing questions for future research.

\bigskip 

\begin{flushleft}
 {\bf Funding:}  
 J. M. do \'O acknowledges partial support from CNPq through grants 312340/2021-4, 409764/2023-0, 443594/2023-6, CAPES MATH AMSUD grant 88887.878894/2023-00 and Para\'iba State Research Foundation (FAPESQ), grant no 3034/2021. G. Lu acknowledges partial support from Simons collaboration grants 519099 and 957892 from the Simons Foundation. R. C. Ponciano acknowledges partial support from CAPES through grants 88887.633572/2021-00, 88881.689999/2022-01, and São Paulo Research Foundation (FAPESP) grant 2023/07697-9.\\
\end{flushleft}

\bibliographystyle{abbrv}
\bibliography{bib}

\begin{thebibliography}{10}

\bibitem{zbMATH01449917}
S.~Adachi and K.~Tanaka.
\newblock Trudinger type inequalities in {{\(\mathbb{R}^N\)}} and their best
  exponents.
\newblock {\em Proc. Am. Math. Soc.}, 128(7):2051--2057, 2000.

\bibitem{zbMATH04099653}
D.~Adams.
\newblock A sharp inequality of {J}. {Moser} for higher order derivatives.
\newblock {\em Ann. Math. (2)}, 128(2):385--398, 1988.

\bibitem{zbMATH03514476}
T.~Aubin.
\newblock Espaces de {Sobolev} sur les vari{\'e}t{\'e}s riemanniennes.
\newblock {\em Bull. Sci. Math., II. S{\'e}r.}, 100:149--173, 1976.

\bibitem{zbMATH00447008}
W.~Beckner.
\newblock Sharp {Sobolev} inequalities on the sphere and the
  {Moser}-{Trudinger} inequality.
\newblock {\em Ann. Math. (2)}, 138(1):213--242, 1993.

\bibitem{zbMATH01370657}
J.~M. Bezerra~do {\'O}.
\newblock {{\(N\)}}-laplacian equations in {{\(\mathbb{R}^N\)}} with critical
  growth.
\newblock {\em Abstr. Appl. Anal.}, 2(3-4):301--315, 1997.

\bibitem{zbMATH05372150}
D.~Bonheure, E.~Serra, and M.~Tarallo.
\newblock Symmetry of extremal functions in {Moser}-{Trudinger} inequalities
  and a {H{\'e}non} type problem in dimension two.
\newblock {\em Adv. Differ. Equ.}, 13(1-2):105--138, 2008.

\bibitem{zbMATH00062849}
D.~M. Cao.
\newblock Nontrivial solution of semilinear elliptic equation with critical
  exponent in {{\({\mathbb{R}{}}^ 2\)}}.
\newblock {\em Commun. Partial Differ. Equations}, 17(3-4):407--435, 1992.

\bibitem{zbMATH04175546}
R.~Dalmasso.
\newblock Probl{\`e}me de {Dirichlet} homog{\`e}ne pour une {\'e}quation
  biharmonique semi- lin{\'e}aire dans une boule. ({Homogeneous} {Dirichlet}
  problem for a semilinear biharmonic equation in a ball).
\newblock {\em Bull. Sci. Math., II. S{\'e}r.}, 114(2):123--137, 1990.

\bibitem{zbMATH05978504}
D.~de~Figueiredo, E.~dos Santos, and O.~Miyagaki.
\newblock Sobolev spaces of symmetric functions and applications.
\newblock {\em J. Funct. Anal.}, 261(12):3735--3770, 2011.

\bibitem{arXiv:2302.02262}
J.~do~{\'O}, G.~Lu, and R.~Ponciano.
\newblock Sharp {Sobolev} and {Adams}-{Trudinger}-{Moser} embeddings on
  weighted {Sobolev} spaces and their applications, 2023.

\bibitem{arXiv:2306.00194}
J.~do~{\'O}, G.~Lu, and R.~Ponciano.
\newblock {Trudinger}-{Moser} embeddings on weighted {Sobolev} spaces on
  unbounded domains, 2023.

\bibitem{zbMATH07827601}
J.~do~{\'O}, G.~Lu, and R.~Ponciano.
\newblock Sharp higher order {Adams}' inequality with exact growth condition on
  weighted {Sobolev} spaces.
\newblock {\em J. Geom. Anal.}, 34(5):53, 2024.
\newblock Id/No 139.

\bibitem{zbMATH05987929}
C.~Fefferman and C.~R. Graham.
\newblock {\em The ambient metric.}, volume 178 of {\em Ann. Math. Stud.}
\newblock Princeton, NJ: Princeton University Press, 2012.

\bibitem{zbMATH06195229}
C.~Fefferman and C.~R. Graham.
\newblock Juhl's formulae for {GJMS} operators and {{\(Q\)}}-curvatures.
\newblock {\em J. Am. Math. Soc.}, 26(4):1191--1207, 2013.

\bibitem{zbMATH07835927}
J.~Flynn, G.~Lu, and Q.~Yang.
\newblock Sharp {Hardy}-{Sobolev}-{Maz}'ya, {Adams} and {Hardy}-{Adams}
  inequalities on quaternionic hyperbolic spaces and on the {Cayley} hyperbolic
  plane.
\newblock {\em Rev. Mat. Iberoam.}, 40(2):403--462, 2024.

\bibitem{zbMATH00490628}
L.~Fontana.
\newblock Sharp borderline {Sobolev} inequalities on compact {Riemannian}
  manifolds.
\newblock {\em Comment. Math. Helv.}, 68(3):415--454, 1993.

\bibitem{zbMATH04198939}
C.~R. Graham, R.~Jenne, L.~J. Mason, and G.~A.~J. Sparling.
\newblock Conformally invariant powers of the {Laplacian}. {I}: {Existence}.
\newblock {\em J. Lond. Math. Soc., II. Ser.}, 46(3):557--565, 1992.

\bibitem{zbMATH06508496}
S.~Hasegawa.
\newblock A critical exponent for {H{\'e}non} type equation on the hyperbolic
  space.
\newblock {\em Nonlinear Anal., Theory Methods Appl., Ser. A, Theory Methods},
  129:343--370, 2015.

\bibitem{zbMATH00945657}
E.~Hebey.
\newblock {\em Sobolev spaces on {Riemannian} manifolds}, volume 1635 of {\em
  Lect. Notes Math.}
\newblock Berlin: Springer, 1996.

\bibitem{zbMATH01447265}
E.~Hebey.
\newblock {\em Nonlinear analysis on manifolds: {Sobolev} spaces and
  inequalities}, volume~5 of {\em Courant Lect. Notes Math.}
\newblock Providence, RI: American Mathematical Society (AMS); New York, NY:
  Courant Institute of Mathematical Sciences, New York Univ., 2000.

\bibitem{zbMATH03864136}
S.~Helgason.
\newblock {\em Groups and geometric analysis. {Integral} geometry, invariant
  differential operators, and spherical functions}, volume 113 of {\em Pure
  Appl. Math., Academic Press}.
\newblock Academic Press, New York, NY, 1984.

\bibitem{zbMATH00615015}
S.~Helgason.
\newblock {\em Geometric analysis on symmetric spaces}, volume~39 of {\em Math.
  Surv. Monogr.}
\newblock Providence, RI: American Mathematical Society, 1994.

\bibitem{zbMATH06435874}
S.~Ibrahim, N.~Masmoudi, and K.~Nakanishi.
\newblock Trudinger-{Moser} inequality on the whole plane with the exact growth
  condition.
\newblock {\em J. Eur. Math. Soc. (JEMS)}, 17(4):819--835, 2015.

\bibitem{zbMATH06197796}
A.~Juhl.
\newblock Explicit formulas for {GJMS}-operators and {{\(Q\)}}-curvatures.
\newblock {\em Geom. Funct. Anal.}, 23(4):1278--1370, 2013.

\bibitem{zbMATH06536855}
D.~Karmakar and K.~Sandeep.
\newblock Adams inequality on the hyperbolic space.
\newblock {\em J. Funct. Anal.}, 270(5):1792--1817, 2016.

\bibitem{zbMATH06252414}
N.~Lam and G.~Lu.
\newblock A new approach to sharp {Moser}-{Trudinger} and {Adams} type
  inequalities: a rearrangement-free argument.
\newblock {\em J. Differ. Equations}, 255(3):298--325, 2013.

\bibitem{zbMATH07384223}
J.~Li and G.~Lu.
\newblock Critical and subcritical {Trudinger}-{Moser} inequalities on complete
  noncompact {Riemannian} manifolds.
\newblock {\em Adv. Math.}, 389:36, 2021.
\newblock Id/No 107915.

\bibitem{zbMATH06898920}
J.~Li, G.~Lu, and Q.~Yang.
\newblock Fourier analysis and optimal {Hardy}-{Adams} inequalities on
  hyperbolic spaces of any even dimension.
\newblock {\em Adv. Math.}, 333:350--385, 2018.

\bibitem{zbMATH07189073}
J.~Li, G.~Lu, and Q.~Yang.
\newblock Sharp {Adams} and {Hardy}-{Adams} inequalities of any fractional
  order on hyperbolic spaces of all dimensions.
\newblock {\em Trans. Am. Math. Soc.}, 373(5):3483--3513, 2020.

\bibitem{zbMATH05270603}
Y.~Li and B.~Ruf.
\newblock A sharp {Trudinger}-{Moser} type inequality for unbounded domains in
  {{\(\mathbb R^n\)}}.
\newblock {\em Indiana Univ. Math. J.}, 57(1):451--480, 2008.

\bibitem{zbMATH07247135}
X.~Liang, G.~Lu, X.~Wang, and Q.~Yang.
\newblock Sharp {Hardy}-{Trudinger}-{Moser} inequalities in any
  {{\(N\)}}-dimensional hyperbolic spaces.
\newblock {\em Nonlinear Anal., Theory Methods Appl., Ser. A, Theory Methods},
  199:18, 2020.
\newblock Id/No 112031.

\bibitem{zbMATH03789294}
P.-L. Lions.
\newblock Sym{\'e}trie et compacit{\'e} dans les espaces de {Sobolev}.
\newblock {\em J. Funct. Anal.}, 49:315--334, 1982.

\bibitem{zbMATH06187289}
G.~Liu.
\newblock Sharp higher-order {Sobolev} inequalities in the hyperbolic space
  {{\(\mathbb H^n\)}}.
\newblock {\em Calc. Var. Partial Differ. Equ.}, 47(3-4):567--588, 2013.

\bibitem{zbMATH06256981}
G.~Lu and H.~Tang.
\newblock Best constants for {Moser}-{Trudinger} inequalities on high
  dimensional hyperbolic spaces.
\newblock {\em Adv. Nonlinear Stud.}, 13(4):1035--1052, 2013.

\bibitem{zbMATH06579881}
G.~Lu and H.~Tang.
\newblock Sharp {Moser}-{Trudinger} inequalities on hyperbolic spaces with
  exact growth condition.
\newblock {\em J. Geom. Anal.}, 26(2):837--857, 2016.

\bibitem{zbMATH06533141}
G.~Lu, H.~Tang, and M.~Zhu.
\newblock Best constants for {Adams}' inequalities with the exact growth
  condition in {{\({\mathbb R}^n\)}}.
\newblock {\em Adv. Nonlinear Stud.}, 15(4):763--788, 2015.

\bibitem{zbMATH06776234}
G.~Lu and Q.~Yang.
\newblock Sharp {Hardy}-{Adams} inequalities for bi-{Laplacian} on hyperbolic
  space of dimension four.
\newblock {\em Adv. Math.}, 319:567--598, 2017.

\bibitem{zbMATH07155018}
G.~Lu and Q.~Yang.
\newblock Paneitz operators on hyperbolic spaces and high order
  {Hardy}-{Sobolev}-{Maz}'ya inequalities on half spaces.
\newblock {\em Am. J. Math.}, 141(6):1777--1816, 2019.

\bibitem{zbMATH07483893}
G.~Lu and Q.~Yang.
\newblock Green's functions of {Paneitz} and {GJMS} operators on hyperbolic
  spaces and sharp {Hardy}-{Sobolev}-{Maz}'ya inequalities on half spaces.
\newblock {\em Adv. Math.}, 398:42, 2022.
\newblock Id/No 108156.

\bibitem{zbMATH07557162}
G.~Lu and Q.~Yang.
\newblock Sharp {Hardy}-{Sobolev}-{Maz}'ya, {Adams} and {Hardy}-{Adams}
  inequalities on the {Siegel} domains and complex hyperbolic spaces.
\newblock {\em Adv. Math.}, 405:62, 2022.
\newblock Id/No 108512.

\bibitem{zbMATH05604409}
G.~Mancini and K.~Sandeep.
\newblock On a semilinear elliptic equation in {{\(\mathbb H^{n}\)}}.
\newblock {\em Ann. Sc. Norm. Super. Pisa, Cl. Sci. (5)}, 7(4):635--671, 2008.

\bibitem{zbMATH05839583}
G.~Mancini and K.~Sandeep.
\newblock Moser-{Trudinger} inequality on conformal discs.
\newblock {\em Commun. Contemp. Math.}, 12(6):1055--1068, 2010.

\bibitem{zbMATH06214509}
G.~Mancini, K.~Sandeep, and C.~Tintarev.
\newblock Trudinger-{Moser} inequality in the hyperbolic space {{\(\mathbb
  H^N\)}}.
\newblock {\em Adv. Nonlinear Anal.}, 2(3):309--324, 2013.

\bibitem{zbMATH06497751}
N.~Masmoudi and F.~Sani.
\newblock Trudinger-{Moser} inequalities with the exact growth condition in
  {{\(\mathbb{R}^N\)}} and applications.
\newblock {\em Commun. Partial Differ. Equations}, 40(8):1408--1440, 2015.

\bibitem{zbMATH06950439}
N.~Masmoudi and F.~Sani.
\newblock Higher order {Adams}' inequality with the exact growth condition.
\newblock {\em Commun. Contemp. Math.}, 20(6):33, 2018.
\newblock Id/No 1750072.

\bibitem{zbMATH00195222}
V.~G. Maz'ya.
\newblock {\em Sobolev spaces. {Transl}. from the {Russian} by {T}. {O}.
  {Shaposhnikova}}.
\newblock Berlin etc.: Springer-Verlag, 1985.

\bibitem{zbMATH03337983}
J.~Moser.
\newblock A sharp form of an inequality by {Trudinger}.
\newblock {\em Indiana Univ. Math. J.}, 20:1077--1092, 1971.

\bibitem{zbMATH06376695}
R.~Musina.
\newblock Weighted {Sobolev} spaces of radially symmetric functions.
\newblock {\em Ann. Mat. Pura Appl. (4)}, 193(6):1629--1659, 2014.

\bibitem{zbMATH07318489}
Q.~A. Ng{\^o} and v.~H. Nguyen.
\newblock Sharp {Adams}-{Moser}-{Trudinger} type inequalities in the hyperbolic
  space.
\newblock {\em Rev. Mat. Iberoam.}, 36(5):1409--1467, 2020.

\bibitem{nguyen2020sharp}
V.~H. Nguyen.
\newblock The sharp {Adams} type inequalities in the hyperbolic spaces under
  the {Lorentz}-{Sobolev} norms, 2020.

\bibitem{zbMATH03228089}
S.~Pokhozhaev.
\newblock Eigenfunctions of the equation {{\(\Delta u+\lambda f(u) = 0\)}}.
\newblock {\em Sov. Math., Dokl.}, 6:1408--1411, 1965.

\bibitem{zbMATH03916373}
A.~Terras.
\newblock Harmonic analysis on symmetric spaces and applications. {I}.
\newblock New {York} etc.: {Springer}-{Verlag}. xv, 341 p., 54 figs. {DM}
  138.00 (1985)., 1985.

\bibitem{zbMATH03261965}
N.~Trudinger.
\newblock On imbeddings into {Orlicz} spaces and some applications.
\newblock {\em J. Math. Mech.}, 17:473--483, 1967.

\bibitem{zbMATH06029076}
G.~Wang and D.~Ye.
\newblock A {Hardy}-{Moser}-{Trudinger} inequality.
\newblock {\em Adv. Math.}, 230(1):294--320, 2012.

\bibitem{zbMATH05194595}
Y.~Yang.
\newblock A sharp form of the {Moser}-{Trudinger} inequality on a compact
  {Riemannian} surface.
\newblock {\em Trans. Am. Math. Soc.}, 359(12):5761--5776, 2007.

\bibitem{zbMATH03232364}
V.~Yudovich.
\newblock Some estimates connected with integral operators and with solutions
  of elliptic equations.
\newblock {\em Sov. Math., Dokl.}, 2:746--749, 1961.

\end{thebibliography}

\end{document}